\documentclass[oneside,svgnames,dvipsnames,11pt]{book}

\usepackage[paper=a4paper]{geometry}
\usepackage[utf8]{inputenc}
\usepackage[english]{babel}
\usepackage{graphicx}
\graphicspath{{img/}}
\usepackage{svg}
\usepackage[hidelinks]{hyperref}
\usepackage{amsmath}
\usepackage{amsfonts}
\usepackage{amsthm}
\usepackage{amssymb}
\usepackage{mathtools}
\usepackage{array}
\usepackage{dsfont}
\usepackage{comment}
\usepackage{cleveref}
\usepackage{subcaption}

\usepackage[sectionbib]{natbib}
\usepackage{chapterbib}

\usepackage{minitoc}
\usepackage{url}

\hypersetup{
    colorlinks=true,
    breaklinks=true,
    linkcolor=blue,
    filecolor=magenta,
    urlcolor=cyan,
    citecolor=blue
}

\newtheorem{theorem}{Theorem}[chapter]
\newtheorem{proposition}[theorem]{Proposition}
\newtheorem{corollary}[theorem]{Corollary}
\newtheorem{lemma}[theorem]{Lemma}
\newtheorem{definition}[theorem]{Definition}

\newtheorem{remark}[theorem]{Remark}
\newtheorem{assumption}[theorem]{Assumption}

\newtheorem{conjecture}[theorem]{Conjecture}

\newcommand{\R}{\mathbb{R}}

\usepackage{tikz}
\usetikzlibrary{decorations.pathreplacing}
\usetikzlibrary{fadings}

\usepackage{booktabs}
\usepackage{enumitem}
\usepackage{bbm}

\newcommand{\E}{\mathbb{E}}

\newcommand{\argmin}{\operatornamewithlimits{argmin}}

\usepackage[colorinlistoftodos,textwidth=3cm]{todonotes}

\usepackage{fourier}
\usepackage{marvosym}

\usepackage{tcolorbox}
\tcbuselibrary{theorems}

\definecolor{warning_yellow}{HTML}{fff8c4}
\definecolor{warning_yellow_dark}{HTML}{f2c779}
\newtcolorbox{warningbox}[1]{colback=warning_yellow,
colframe=warning_yellow_dark,fonttitle=\bfseries, sharp corners,
title=#1}
\newtcolorbox{resultbox}[1]{colback=gray!80!white,
colframe=gray!20!white,fonttitle=\bfseries, sharp corners,
title=#1}

\newtcolorbox{mybox_ex}[1]{colback=Emerald!5!white,
colframe=Emerald!75!black,fonttitle=\bfseries,
title=#1}

\newtcbtheorem[auto counter, number within=chapter]{mytheo}{Theorem}%
{colback=RoyalBlue!5!white,colframe=RoyalBlue!90!black,fonttitle=\bfseries, sharp corners, label separator={}}{}
\newtcbtheorem[use counter from=mytheo]{myprop}{Proposition}%
{colback=RoyalBlue!5!white,colframe=RoyalBlue!90!black, sharp corners, fonttitle=\bfseries, label separator={}}{prop}
\newtcbtheorem[use counter from=mytheo]{mydef}{Definition}%
{colback=Emerald!5!white,colframe=Emerald!75!black,fonttitle=\bfseries, sharp corners, label separator={}}{def}
\newtcbtheorem[use counter from=mytheo]{mycorol}{Corollary}%
{colback=RoyalBlue!5!white,colframe=RoyalBlue!90!black,fonttitle=\bfseries, sharp corners, label separator={}}{cor}
\newtcbtheorem[use counter from=mytheo]{mylem}{Lemma}%
{colback=Gray!5!white,colframe=Gray!75!black,fonttitle=\bfseries, sharp corners, label separator={}}{lem}

\newcommand{\cC}{\mathcal{C}}
\newcommand{\cF}{\mathcal{F}}

\newcommand{\cN}{\mathcal{N}}

\usepackage{bm}
\definecolor{blendedblue}{rgb}{0.2,0.2,0.7}

\newcommand{\dd}{\mathop{}\!\mathrm{d}}

\usepackage[f]{esvect}

\usepackage{tikz}
\usetikzlibrary{calc,arrows}

\usepackage{accents}

\newcommand\restr[2]{{
  \left.\kern-\nulldelimiterspace 
  #1
  \littletaller
  \right|_{#2}
  }}

\title{Statistical Analysis of Generative Modeling}
\author{Eddie Aamari \and Arthur St\'ephanovitch}

\begin{document}

\begin{titlepage}
    \centering
    \scshape
    \vspace*{\baselineskip}

    \newcommand{\AuthorBlock}[3]{%
        \begin{minipage}[t]{0.42\textwidth}
            \centering
            {\Large #1\par}
            \vspace{0.35\baselineskip}
            {\normalfont\itshape\small #2\par}
            \vspace{0.25\baselineskip}
            {\normalfont\ttfamily\small
            \href{mailto:#3}{#3}\par}
        \end{minipage}%
    }

    \vfill
    \rule{\textwidth}{1.6pt}\vspace*{-\baselineskip}\vspace*{2pt}
    \rule{\textwidth}{0.4pt}

    \vspace{0.75\baselineskip}

    \mbox{
    \LARGE \hspace{-1.45em}~Statistical~Analysis~of~Markovian~Generative~Modeling}

    \vspace{0.75\baselineskip}

    \rule{\textwidth}{0.4pt}\vspace*{-\baselineskip}\vspace{3.2pt}
    \rule{\textwidth}{1.6pt}

    \vspace{2\baselineskip}

    \large
    Mini-course lecture notes

    \vspace*{3\baselineskip}

    \AuthorBlock
        {Eddie Aamari}
        {Département de Mathématiques et Applications \\ 
        CNRS, École Normale Supérieure, PSL}
        {eddie.aamari@ens.fr}
    \hfill
    \AuthorBlock
        {Arthur St\'ephanovitch}
        {Centre de recherche en économie et statistique \\
        ENSAE, IP Paris}
        {arthur.stephanovitch@ensae.fr}

    \vfill
    \large
    Spring 2026

\end{titlepage}

\frontmatter
\dominitoc
\tableofcontents

\mainmatter

\chapter{Score-based generative models}
\label{chap:score-based-generative-models}
\minitoc

We are deeply grateful to \href{https://www.imo.universite-paris-saclay.fr/~claire.boyer/}{Claire Boyer} for her essential contribution to the first version of this chapter, presented in 2023 at Sorbonne Université.

\section{Stochastic calculus survival kit}

There are two main ways to formalize diffusion-based generative models for quantitative data.
One uses discrete time increments~\cite{ho2020denoising} and requires knowledge of Markov chains only, but it does not yield a clear mathematical framework.
We follow the second approach, which uses continuous time dynamics~\cite{song2020score}. It requires tools from stochastic calculus, but yields a fairly unified functional framework that can then be generalized.

This section gives a minimal overview of stochastic calculus. To make the presentation lighter, we purposely leave all the convergence and measurability issues under the carpet. For rigorous derivations, you shall find all the necessary mathematical details in Jean-François Le-Gall's book~\cite{le2016brownian}.

\subsection{Brownian motion}

Given a measurable space $(E,\mathcal{E})$ and an arbitrary index set $\mathcal{T}$, a \textit{random process indexed by~$\mathcal{T}$ with values in $E$} is a collection $(X_t)_{t \in \mathcal{T}}$ of random variables with values in $E$.

\begin{definition}[Gaussian process]
A (real-valued) random process is called a (centered) \textit{Gaussian process} if any finite linear combination of the variables $(X_t)_{t \in \mathcal{T}}$ is a centered Gaussian.
The distribution of a centered Gaussian process is fully determined by its \textit{covariance kernel}
$K(s,t) := \mathbb{E}[ X_s X_t ]$, for all $s,t \in \mathcal{T}$.
\end{definition}

The main building block of stochastic calculus is the so-called Brownian motion, which we first present in dimension $d=1$.

\begin{definition}[Brownian motion]
\label{def:brownian}
There exists a process $(B_t)_{t \geq 0}$ called \textit{Brownian motion}, which is a centered Gaussian process over $\mathcal{T} = \mathbb{R}_+$ with continuous sample paths $t \mapsto B_t$ and such that any of the following equivalent properties hold.
\begin{itemize}[leftmargin=*]
\item $B_0 = 0$ a.s., and for all $0 \leq s < t$, the random variable $B_t-B_s$ is independent of the $\sigma$-field $\cF_s := \sigma(B_r, r \leq s)$ and distributed according to $\cN(0,t-s)$.
\item $B_0 = 0$ a.s., and for all $0 \leq t_0 < t_1 < \ldots < t_p$, the increments $(B_{t_{j}}-B_{t_{j-1}})_{j}$ are independent and distributed according to $\cN(0,t_j-t_{j-1})$.
\item For all $s,t\geq 0$, $K(s,t) = s \wedge t$.
\end{itemize}
\end{definition}

\begin{proof}
See~\cite[Proposition 2.3]{le2016brownian} for the equivalences. A geometric construction on $\mathcal{T}=[0,1]$ uses Donsker's invariance principle. It is based on an i.i.d.\ sequence $(\xi_i)_{i \in \mathbb{N}}$ of centered real random variables with unit variance. Define the continuous random walk
$W_n(u) := \sum_{i=1}^{\lceil u \rceil} \xi_i \psi(u-i)$
for $u \in [0,1]$, where $\psi(v) := \min\{1,\max\{0,v\}\}$.
Then $(B_t)_{t \in [0,1]}$ is constructed as the limit in distribution of the sequence of scaled processes $\bigl(\frac{1}{\sqrt{n}} W_n(nt)\bigr)_{t \in [0,1]}$.
\end{proof}

\begin{figure}
\centering
\includegraphics[width=1\linewidth]{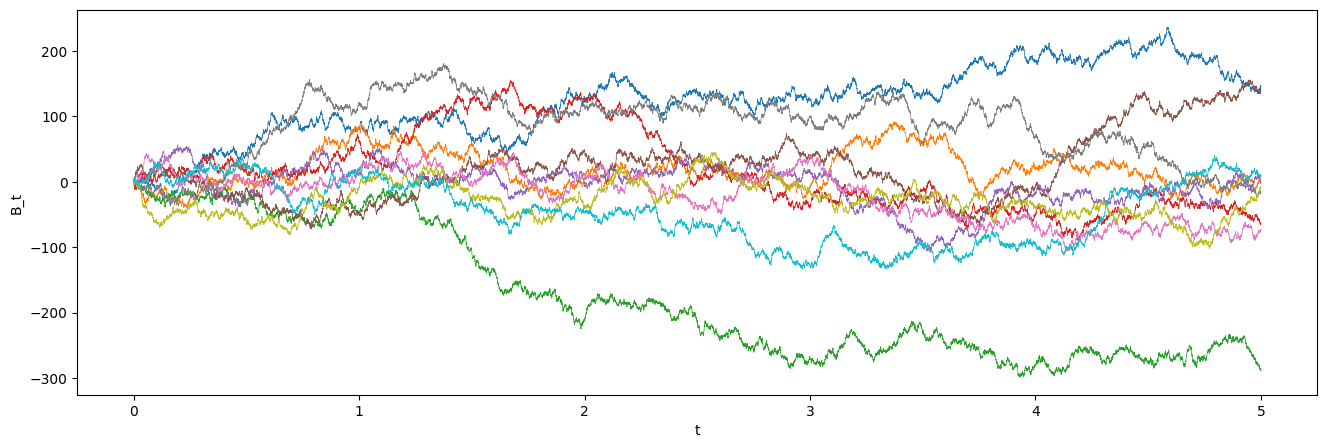}
\caption{Ten trajectories of a Brownian motion.}
\label{fig:brownian}
\end{figure}

See Figure~\ref{fig:brownian} for an illustration of the sample paths of $(B_t)_t$.

\begin{definition}[Filtration, adapted process]
\label{def:filtration-adapted-process}
~
\begin{itemize}[leftmargin=*]
\item A \emph{filtration} over $\mathcal{T} \subset \R$ is an increasing family $(\cF_t)_{t \in \mathcal{T}}$ of $\sigma$-fields, i.e.\ $\cF_s \subset \cF_t$ for all $s \leq t$.
\item A stochastic process $(X_t)_{t \in \mathcal{T}}$ is \emph{adapted} to a filtration $(\cF_t)_{t \in \mathcal{T}}$ if for all $t$, $X_t$ is $\cF_t$-measurable.
\end{itemize}
\end{definition}

Among the many nice properties that the Brownian motion exhibits, let us point out three of the most important ones.

\begin{itemize}[leftmargin=*]
\item \textit{(Martingale property)}
The first characterization of Definition~\ref{def:brownian} yields that the Brownian motion is a martingale adapted to the filtration $\bigl(\cF_s := \sigma(B_r,r\leq s)\bigr)_{s\geq 0}$, since for all $0 \leq s \leq t$:
\begin{align*}
\mathbb{E}[ B_t \mid \cF_s ] = \mathbb{E}[ B_s \mid \cF_s ] + \mathbb{E}[ B_t-B_s \mid \cF_s ] = B_s + \mathbb{E}[ B_t-B_s ] = B_s.
\end{align*}

\item \textit{(Hölder smoothness)}
By definition, a Brownian motion has sample paths that are almost surely continuous. From Kolmogorov’s continuity criterion, they are locally Hölder continuous with exponent $1/2-\delta$ for all $0 < \delta < 1/2$. This essentially comes from the fact that for all $t \geq s \geq 0$, $\mathbb{E} [ (B_t-B_s)^2 ] / (t-s) = 1$.

\item \textit{(Quadratic variation)}
Because sample paths are not more than $1/2$-Hölder everywhere, they do not have finite length. We say that $(B_t)_t$ has infinite \emph{first variation}.
However, its \emph{quadratic variation} is always well defined and deterministic. More precisely, for any sequence of subdivisions $0 = t_0^n < t_1^n < \ldots < t_{p_n}^n = t$ whose maximal spacing tends to zero:
\begin{align*}
\sum_{j=1}^{p_n} (B_{t_j^n} - B_{t_{j-1}^n})^2 \xrightarrow[n \to \infty]{L^2} t.
\end{align*}
\end{itemize}

\subsection{Itô stochastic integral and differential}

Since $(B_t)_t$ exhibits infinite first variation, it is not possible to define the integral $\int_s^t \phi(u) \dd B_u$ as a special case of the usual Stieltjes integral. However, we can construct the Itô integral using the finiteness of its quadratic variation. The construction defines the integral first for elementary (piecewise constant) processes as the sum of weighted increments $\int_a^b X_t \dd B_t := \sum X_{t_{j-1}} (B_{t_{j}}-B_{t_{j-1}})$, and then extends it by density to the space of square-integrable adapted processes. This yields the crucial Itô isometry: $\E[(\int_0^t X_s \dd B_s)^2] = \int_0^t \E[X_s^2] \dd s$.

\begin{definition}[Itô process, stochastic differential]
\label{def:ito-process}
An \emph{Itô process} is a stochastic process $(X_t)_t$ adapted to $(\cF_t)_t$ which can be written as
\begin{align*}
X_t = X_0 + \int_0^t a_s \dd s + \int_0^t b_s \dd B_s,
\end{align*}
where $a_t,b_t$ are continuous stochastic processes in $L^1$ and $L^2$ respectively. We write its \emph{stochastic differential} as
$\dd X_t := a_t \dd t + b_t \dd B_t$.
Here $a_t$ is the \emph{drift} and $b_t$ the \emph{diffusion term} (or \emph{volatility}).
\end{definition}

If $F_t$ is a $\cC^1$ process (meaning $b_t=0$), we recover the classical chain rule $\dd F_t = F'_t \dd t$. For true Itô processes, the non-zero quadratic variation permanently modifies the standard chain rule of calculus.

\begin{theorem}[Multidimensional Itô formula]
\label{thm:ito-multidimension}
Let $(X_t)_{0 \leq t \leq T}$ be an Itô process in $\R^d$
and $\Phi \in \cC^{2,1}(\R^d \times \R_+,\R^k)$.
Then $\bigl(\Phi(X_t,t) \bigr)_{0 \leq t \leq T}$ is an Itô process in $\R^k$ with stochastic differential
\begin{align*}
\dd \Phi(X_t,t)
&= \partial_t \Phi(X_t,t) \dd t
+ \sum_{k=1}^d \partial_{x_k} \Phi (X_t,t) \dd X_t^{(k)}
+ \frac{1}{2} \sum_{k,\ell=1}^d \partial^2_{x_k,x_\ell} \Phi(X_t,t) \dd \langle X^{(k)},X^{(\ell)} \rangle_t,
\end{align*}
where $X_t = (X_t^{(1)},\ldots,X_t^{(d)})$, and $\dd \langle B^{(k)}, B^{(\ell)} \rangle_t = \delta_{k,\ell} \dd t$ by independence of the Brownian components.
\end{theorem}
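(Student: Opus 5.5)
We prove the formula by a second-order Taylor expansion along a partition of $[0,t]$, keeping only the terms that survive in the limit. Throughout, we work componentwise in $\R^k$, so it suffices to treat $k=1$. We also localize: stopping at the first exit time $\tau_R$ of $X$ from the ball of radius $R$, we may assume that $X$ stays in a compact set. Then $\Phi$ and its derivatives up to the relevant orders are bounded and uniformly continuous there, and letting $R\to\infty$ at the end removes the restriction. By density in the construction of the Itô integral, we may further assume that $a$ and $b$ are elementary (piecewise constant adapted) processes, and pass to the limit at the end using the Itô isometry.

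Fix a subdivision $0=t_0<\dots<t_p=t$ with mesh tending to zero, and write the telescoping sum $\Phi(X_t,t)-\Phi(X_0,0)=\sum_j [\Phi(X_{t_j},t_j)-\Phi(X_{t_{j-1}},t_{j-1})]$. We Taylor expand each increment to first order in time and second order in space around $(X_{t_{j-1}},t_{j-1})$, with increments $\Delta t_j$ and $\Delta X_j$. This produces four contributions, which we handle in turn.
\begin{itemize}
\item The $\partial_t\Phi\,\Delta t_j$ terms converge to the Riemann integral $\int_0^t\partial_t\Phi(X_s,s)\dd s$.
\item The terms $\partial_{x_k}\Phi\,\Delta X^{(k)}_j$ split into a drift part, which converges to $\int_0^t\partial_{x_k}\Phi\, a^{(k)}_s\dd s$, and a martingale part, which is precisely the elementary Itô sum and converges in $L^2$ to $\int_0^t \partial_{x_k}\Phi\, b_s\dd B_s$ by the Itô isometry.
\item The second-order terms $\frac12\partial^2_{x_k x_\ell}\Phi\,\Delta X^{(k)}_j\Delta X^{(\ell)}_j$ are treated below.
\item The Taylor remainders are $o(|\Delta X_j|^2+\Delta t_j)$ uniformly, by uniform continuity of the second derivatives. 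Their sum therefore vanishes, because $\sum_j|\Delta X_j|^2$ stays bounded in probability while the mesh goes to zero.
\end{itemize}

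The main obstacle is the second-order sum. We expand $\Delta X^{(k)}_j\Delta X^{(\ell)}_j$ into drift--drift, drift--noise and noise--noise products. The first two are $O(\Delta t_j^{3/2})$ in $L^1$ and sum to zero. For the noise--noise term with elementary $b$, it reduces to $\sum_j c_{j-1}\,\Delta B^{(m)}_j\Delta B^{(n)}_j$, where $c_{j-1}$ is $\cF_{t_{j-1}}$-measurable and bounded. We show that it converges in $L^2$ to $\sum_j c_{j-1}\delta_{mn}\Delta t_j$. The argument is to write the difference as $\sum_j c_{j-1}\bigl(\Delta B^{(m)}_j\Delta B^{(n)}_j-\delta_{mn}\Delta t_j\bigr)$. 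The summands are martingale increments, centered given $\cF_{t_{j-1}}$ by independence of Brownian increments, so cross terms vanish in the second moment. This leaves $\sum_j \E[c_{j-1}^2]\,O(\Delta t_j^2)\to0$, which is the same computation as the quadratic variation statement for $B$ recalled above, extended to independent coordinates. The limit identifies $\dd\langle X^{(k)},X^{(\ell)}\rangle_t=(b_tb_t^\top)_{k\ell}\dd t$. Combining all pieces, then undoing the density approximation and the localization, yields the stated differential.
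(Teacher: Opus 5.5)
Your proposal is correct and follows the same route as the paper: a telescoping Taylor--Lagrange expansion along a fine partition, in which the first-order terms converge to the Itô integral and the second-order terms to the quadratic (co)variation. The paper only sketches this for $d=1$ and time-independent $\Phi$, whereas you also treat the $\partial_t \Phi$ term, the cross-variations $\dd\langle X^{(k)},X^{(\ell)}\rangle_t = (b_t b_t^\top)_{k\ell}\,\dd t$, the Taylor remainder, and the localization and density steps that make the sketch rigorous.
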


\begin{proof}
Let us consider the simpler case where $\Phi(x,t) = \Phi(x)$. Using a Taylor-Lagrange expansion on an arbitrarily fine partition $0 = t_0 < t_1 < \dots < t_p = t$, we write the telescopic sum:
\begin{align*}
\Phi(X_t) - \Phi(X_0)
&= \sum_{j=1}^{p} \big(\Phi(X_{t_j})-\Phi(X_{t_{j-1}})\big) \\
&= \sum_{j=1}^{p} \Phi'(X_{t_{j-1}})(X_{t_j}-X_{t_{j-1}}) + \frac{1}{2} \sum_{j=1}^{p} \Phi''(X_{t^\ast_{j-1}})(X_{t_j}-X_{t_{j-1}})^2,
\end{align*}
where $t^\ast_{j-1} \in [t_{j-1},t_{j}]$. By the definition of the stochastic integral, the first sum converges exactly to $\int_0^t \Phi'(X_s)\dd X_s$. For the second sum, by the uniform continuity of $(X_t)_t$, $\Phi''(X_{t^\ast_{j-1}}) \simeq \Phi''(X_{t_{j-1}})$. The squared increments $(X_{t_j}-X_{t_{j-1}})^2$ behave as the quadratic variation, yielding the limit $\int_0^t \Phi''(X_s)\dd \langle X\rangle_s$.
\end{proof}

\section{Diffusion from a distribution and back}

Historically, continuous-time generative models emerged from a simple idea: if we iteratively add noise to data over time $\tau \in [0,T]$, we eventually destroy all structure, reaching a simple, tractable distribution. If we can mathematically \emph{reverse} this stochastic process, we obtain a generative model mapping noise to data.

To cleanly separate this historical forward process from our final generative models, we denote the "data-destroying" path by $Y_\tau$.

\subsection{Ornstein–Uhlenbeck process}
\label{sec:OU}

\begin{definition}[Ornstein-Uhlenbeck process]
\label{def:OU-1D}
An Ornstein-Uhlenbeck (OU) process with parameters $\lambda,\sigma>0$ starting at $y \in \R^d$ is defined by:
\begin{align*}
\begin{cases}
\dd Y_\tau = -\lambda Y_\tau \dd \tau + \sqrt{2} \sigma \dd B_\tau, \\
Y_0 = y.
\end{cases}
\end{align*}
\end{definition}

To solve this SDE, we introduce $U_\tau := e^{\lambda \tau} Y_\tau$. Applying Itô's formula to $\Phi(y,\tau) := e^{\lambda \tau} y$ yields $\dd U_\tau = \sqrt{2} \sigma e^{\lambda \tau} \dd B_\tau$. Integrating this against Brownian motion produces a Gaussian process. 

\begin{figure}
\centering
\includegraphics[width=1\linewidth]{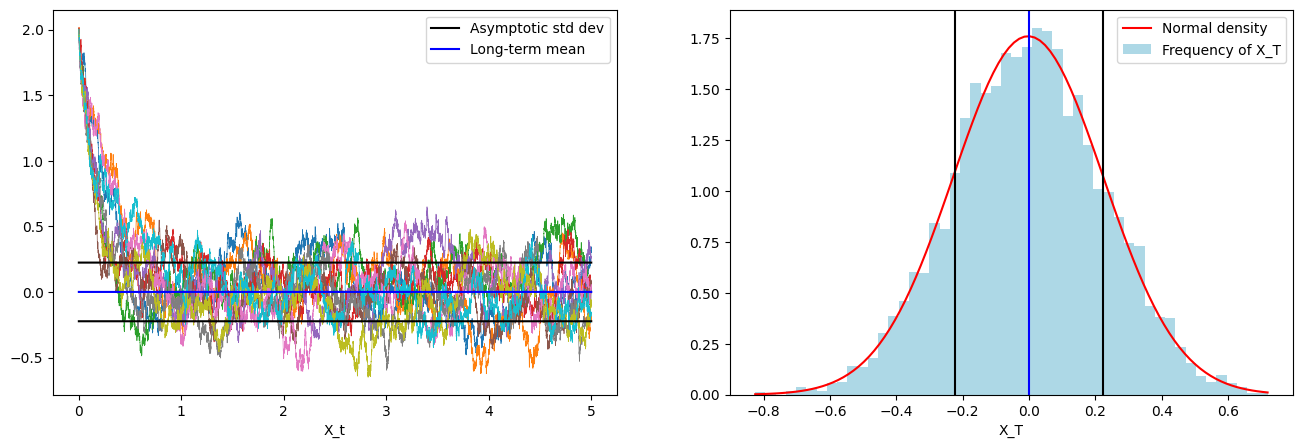}
\caption{Ten trajectories of a homogeneous Ornstein-Uhlenbeck (Definition~\ref{def:OU-1D}) starting from $Y_0 = 2$ with $\lambda = 5$ and $\sigma = 1/2$, all stopped at time $T=5$ (left).
Histogram of $Y_T$ on $N=5000$ draws compared to the limiting normal (right).
}
\label{fig:OU}
\end{figure}

See Figure~\ref{fig:OU} for an illustration.

\begin{proposition}[Time-inhomogeneous Ornstein-Uhlenbeck]
\label{prop:generalized-OU}
The generalized equation $\dd Y_\tau = -f_\tau Y_\tau \dd \tau + \sqrt{2} \sigma_\tau \dd B_\tau$ admits a unique solution. If $Y_0 \sim p^\star(y) \dd y$, then $Y_\tau$ is distributed as
\begin{align*}
Y_0 e^{-\mu_\tau} + \sqrt{\int_0^\tau 2 \sigma_s^2 e^{2(\mu_s-\mu_\tau)} \dd s} \;\; \xi,
\end{align*}
where $\xi \sim \mathcal{N}(0,\mathrm{Id})$ is independent of $Y_0$, and $\mu_\tau := \int_0^\tau f_s \dd s$.
\end{proposition}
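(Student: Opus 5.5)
The plan is to mimic the integrating-factor trick used just above for the homogeneous OU process, replacing $e^{\lambda\tau}$ by $e^{\mu_\tau}$ with $\mu_\tau = \int_0^\tau f_s \dd s$. Assuming $f$ and $\sigma$ are continuous (hence locally bounded) on $[0,T]$, I set $\Phi(y,\tau) := e^{\mu_\tau} y$, which lies in $\cC^{2,1}$ since $\partial_\tau \Phi = f_\tau e^{\mu_\tau} y$. It is linear in $y$, so the second-order term of Theorem~\ref{thm:ito-multidimension} vanishes. Applying the formula to $U_\tau := e^{\mu_\tau} Y_\tau$ gives
\begin{align*}
\dd U_\tau = f_\tau e^{\mu_\tau} Y_\tau \dd \tau + e^{\mu_\tau}\bigl(-f_\tau Y_\tau \dd \tau + \sqrt{2}\sigma_\tau \dd B_\tau\bigr) = \sqrt{2}\sigma_\tau e^{\mu_\tau} \dd B_\tau .
\end{align*}
Integrating and multiplying by $e^{-\mu_\tau}$ yields the explicit candidate
\begin{align*}
Y_\tau = e^{-\mu_\tau} Y_0 + \int_0^\tau \sqrt{2}\sigma_s e^{\mu_s-\mu_\tau} \dd B_s .
\end{align*}
Existence follows by checking, with Itô's formula again (product of the deterministic $\cC^1$ function $e^{-\mu_\tau}$ and the Itô process $U_\tau$), that this expression solves the SDE.

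For uniqueness, I take two solutions $Y,Y'$ driven by the same Brownian motion with the same initial value. Their difference $D_\tau$ satisfies the random linear ODE $\dd D_\tau = -f_\tau D_\tau \dd\tau$, because the noise terms cancel. Hence $\dd(e^{\mu_\tau}D_\tau)=0$, so $D_\tau = e^{-\mu_\tau}D_0 = 0$. No Lipschitz or Gronwall machinery is needed here, since pathwise this is a classical ODE.

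For the law, I work conditionally on $Y_0$. The stochastic integral $\int_0^\tau g(s)\dd B_s$ with deterministic integrand $g(s)=\sqrt2\sigma_s e^{\mu_s-\mu_\tau}$ is a centered Gaussian vector. This follows from the construction: on elementary integrands it is a finite sum of independent Gaussian increments, and $L^2$ limits of centered Gaussians are Gaussian. By the Itô isometry, applied coordinatewise using the independence of the Brownian components, its covariance is $\bigl(\int_0^\tau 2\sigma_s^2 e^{2(\mu_s-\mu_\tau)}\dd s\bigr)\mathrm{Id}$. It is also independent of $Y_0$, since it is measurable with respect to the Brownian increments, which are independent of the initial condition. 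Hence it equals in law $\sqrt{\int_0^\tau 2\sigma_s^2 e^{2(\mu_s-\mu_\tau)}\dd s}\,\xi$ with $\xi\sim\cN(0,\mathrm{Id})$ independent of $Y_0$, which gives the claim.

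The main obstacle, relative to the informal level of the notes, is justifying Gaussianity and independence of the Wiener integral rather than the algebra. I would handle it through the elementary-process approximation and stability of Gaussian families under $L^2$ limits. The Itô computations themselves are routine.
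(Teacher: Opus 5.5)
Your proof is correct and follows the route the notes indicate for this proposition: the integrating factor $U_\tau = e^{\mu_\tau} Y_\tau$ (generalizing the $e^{\lambda\tau}$ trick stated just before it), Itô's formula to remove the drift, and Gaussianity of the resulting Wiener integral, whose variance comes from the Itô isometry. The notes leave uniqueness, Gaussianity and independence from $Y_0$ implicit, and your pathwise-ODE uniqueness argument and elementary-integrand approximation fill these in correctly.
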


\begin{figure}
\centering
\includegraphics[width=0.8\linewidth]{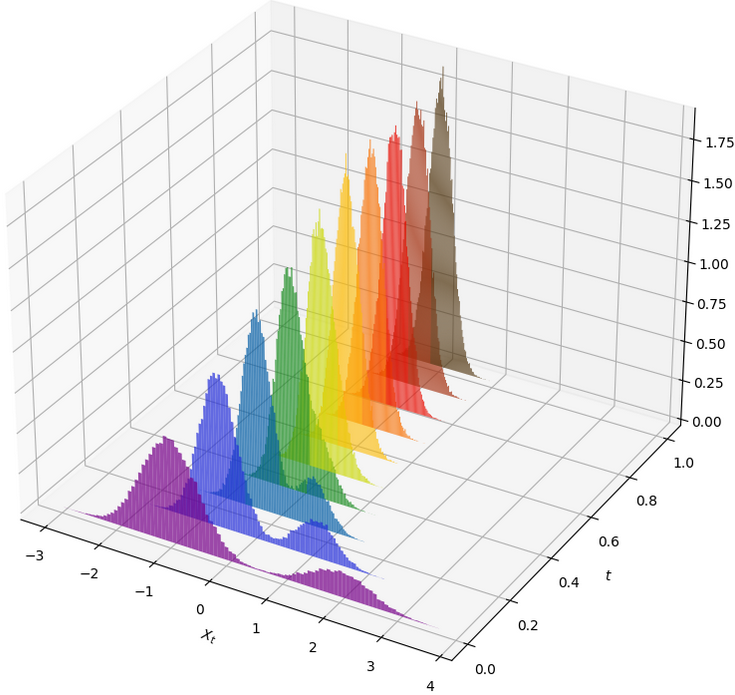}
\caption{
Exemplifying Proposition~\ref{prop:generalized-OU} with histograms of Ornstein-Uhlenbeck processes stopped at $T=1$ starting from $Y_0$ with mixture distribution $p^\star = 0.8 \mathcal{N}(-1,1/2) + 0.2 \mathcal{N}(-2,1/2)$. Diffusion parameters are as in Figure~\ref{fig:OU}. Histograms are computed over $N = 50000$ trajectories.
}
\label{fig:OU-interpolation}
\end{figure}

See Figure~\ref{fig:OU-interpolation} for an illustration of Proposition~\ref{prop:generalized-OU}.

By simulating this OU process forward over a sufficiently large time $T$, the initial data $Y_0$ decays exponentially to zero, and the variance stabilizes such that $Y_T \approx \mathcal{N}(0, \frac{\sigma^2}{\lambda}\mathrm{Id})$. This characterizes our forward process mapping $p^\star$ to pure noise. The fundamental premise of diffusion models is that if we can mathematically reverse this SDE, we can start from pure noise and simulate a trajectory that will exactly terminate at a sample from $p^\star$.

\subsection{Fokker-Planck equation}

To formally reverse this process, we must understand how the probability density of the process evolves. We use PDEs to characterize this evolution.

\begin{proposition}[Fokker-Planck characterization]
\label{prop:fokker-planck}
Let $(Y_\tau)_\tau$ be the solution of $\dd Y_\tau = f_\tau(Y_\tau) \dd \tau + \sqrt{2} \sigma_\tau(Y_\tau) \dd B_\tau$ with initial condition $Y_0 \sim p_0(y) \dd y$ having a smooth density. Then $Y_\tau$ has a density $p_\tau$ satisfying the \emph{Fokker-Planck} equation:
\begin{align*}
\partial_\tau p_\tau(y) = - \nabla \cdot \bigl(f_\tau(y) p_\tau(y) \bigr) + \Delta \bigl(\sigma_\tau^2(y) p_\tau(y)\bigr).
\end{align*}
\end{proposition}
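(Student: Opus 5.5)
We prove the Fokker--Planck equation in its weak form: test the law of $Y_\tau$ against smooth compactly supported functions, then integrate by parts. Throughout, in the spirit of this section, we take for granted that $Y_\tau$ has a density $p_\tau$ smooth enough in $(y,\tau)$ to differentiate and to integrate by parts. This is the regularity issue swept under the carpet; rigorously it would follow from hypoellipticity or from regularity theory for parabolic equations.

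Fix a test function $\varphi \in \cC^\infty_c(\R^d)$, which does not depend on time. Applying the multidimensional Itô formula (Theorem~\ref{thm:ito-multidimension}) to $\Phi(y,\tau) = \varphi(y)$ along $\dd Y_\tau = f_\tau(Y_\tau)\dd\tau + \sqrt 2\,\sigma_\tau(Y_\tau)\dd B_\tau$ gives
\begin{align*}
\dd \varphi(Y_\tau) = \Bigl( \langle \nabla\varphi(Y_\tau), f_\tau(Y_\tau)\rangle + \sigma_\tau^2(Y_\tau)\,\Delta\varphi(Y_\tau) \Bigr)\dd\tau + \sqrt2\,\sigma_\tau(Y_\tau)\langle \nabla\varphi(Y_\tau),\dd B_\tau\rangle .
\end{align*}
The diffusion term contributes $\tfrac12\cdot 2\sigma_\tau^2\,\delta_{k\ell}$ to $\dd\langle Y^{(k)},Y^{(\ell)}\rangle_\tau$, since $\sigma_\tau$ is scalar. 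This is why the second-order term is the plain Laplacian $\sigma_\tau^2\Delta\varphi$.

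Next we integrate from $0$ to $\tau$ and take expectations. The stochastic integral has zero mean because its integrand $\sigma_\tau(Y_\tau)\nabla\varphi(Y_\tau)$ is adapted and square integrable: $\nabla\varphi$ is bounded, and we assume $\sigma$ is locally bounded, so the integrand is bounded on the compact support of $\varphi$. By the Itô isometry the integral is then an $L^2$ martingale started at $0$. Writing expectations against $p_\tau$ and differentiating in $\tau$, which is justified by continuity of the integrand in $\tau$, we obtain
\begin{align*}
\frac{\dd}{\dd\tau}\int \varphi\, p_\tau \dd y = \int \bigl( \langle\nabla\varphi, f_\tau\rangle + \sigma_\tau^2 \Delta\varphi \bigr) p_\tau \dd y .
\end{align*}

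Because $\varphi$ has compact support, integration by parts produces no boundary terms. Integrating by parts once in the first term and twice in the second moves all derivatives onto $p_\tau$:
\begin{align*}
\int \varphi\,\partial_\tau p_\tau \dd y = \int \varphi\,\bigl( -\nabla\cdot(f_\tau p_\tau) + \Delta(\sigma_\tau^2 p_\tau) \bigr)\dd y .
\end{align*}
This identity holds for every $\varphi\in\cC^\infty_c(\R^d)$. By the fundamental lemma of the calculus of variations, together with continuity of both sides in $y$, the two integrands coincide pointwise, which is the claimed equation.

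The main obstacle is not the computation but the regularity assumptions it rests on: existence and smoothness of $p_\tau$, and the exchange of $\dd/\dd\tau$ with the integral. If pressed to make these rigorous, we would first establish the weak (distributional) form above, which only requires local boundedness of $f$ and $\sigma$. We would then invoke parabolic regularity, assuming $\sigma_\tau$ is bounded below, to upgrade the weak form to a classical solution.
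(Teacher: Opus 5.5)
Your proposal is correct and follows the paper's proof essentially step for step: apply Itô's formula to a compactly supported test function, take expectations so the martingale term vanishes, integrate by parts (once for the drift, twice for the Laplacian), and conclude from the arbitrariness of the test function. Your version is slightly more careful than the paper's (you integrate in time before differentiating, and you note that the compact support of $\varphi$ alone removes the boundary terms, so the paper's extra decay assumption on $p_\tau$ is unnecessary), while both arguments take the existence and smoothness of $p_\tau$ for granted.
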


\begin{proof}
Let $\Phi(y)$ be an arbitrary smooth test function with compact support in $\R^d$. From Theorem~\ref{thm:ito-multidimension}, the differential of $\Phi(Y_\tau)$ is:
\begin{align*}
\dd \Phi(Y_\tau)
&= \sum_{k=1}^d \partial_{y_k} \Phi(Y_\tau) \dd Y_\tau^{(k)} + \frac{1}{2} \sum_{k,\ell=1}^d \partial^2_{y_k,y_\ell} \Phi(Y_\tau) \dd \langle Y^{(k)},Y^{(\ell)} \rangle_\tau \\
&= \langle \nabla \Phi(Y_\tau) , \dd Y_\tau \rangle + \sigma_\tau^2(Y_\tau) \Delta \Phi(Y_\tau) \dd \tau,
\end{align*}
where we used that the diffusion matrix is purely diagonal, so $\dd \langle Y^{(k)},Y^{(\ell)} \rangle_\tau = 2\sigma_\tau^2(Y_\tau) \delta_{k,\ell} \dd \tau$. Substituting $\dd Y_\tau = f_\tau(Y_\tau) \dd \tau + \sqrt{2} \sigma_\tau(Y_\tau) \dd B_\tau$ yields:
\begin{align*}
\dd \Phi(Y_\tau)
&= \bigl( \langle \nabla \Phi(Y_\tau) ,  f_\tau(Y_\tau) \rangle + \sigma_\tau^2(Y_\tau) \Delta \Phi(Y_\tau) \bigr) \dd \tau + \sqrt{2} \sigma_\tau(Y_\tau) \langle \nabla \Phi(Y_\tau) , \dd B_\tau \rangle.
\end{align*}
We now take the expectation of both sides. Because the Itô integral with respect to Brownian motion is a zero-mean martingale, the $\dd B_\tau$ term vanishes. We divide by $\dd \tau$ to obtain the time derivative:
\begin{align*}
\frac{\dd}{\dd \tau} \E\bigl[ \Phi(Y_\tau) \bigr]
&= \E\bigl[ \langle \nabla \Phi(Y_\tau) ,  f_\tau(Y_\tau) \rangle + \sigma_\tau^2(Y_\tau) \Delta \Phi(Y_\tau) \bigr].
\end{align*}
Writing the expectations explicitly as spatial integrals against the density $p_\tau(y) \dd y$:
\begin{align*}
\int_{\R^d} \Phi(y) \partial_\tau p_\tau(y) \dd y 
&= \int_{\R^d} \bigl( \langle \nabla \Phi(y) ,  f_\tau(y) \rangle + \sigma_\tau^2(y) \Delta \Phi(y) \bigr) p_\tau(y) \dd y.
\end{align*}
We now apply integration by parts to transfer the spatial derivatives from the test function $\Phi$ to the density terms. Assuming $p_\tau$ and its derivatives vanish at infinity:
\begin{align*}
\int_{\R^d} \langle \nabla \Phi(y), f_\tau(y) \rangle p_\tau(y) \dd y &= - \int_{\R^d} \Phi(y) \nabla \cdot \bigl(f_\tau(y) p_\tau(y)\bigr) \dd y, \\
\int_{\R^d} \Delta \Phi(y) \bigl(\sigma_\tau^2(y) p_\tau(y)\bigr) \dd y &= \int_{\R^d} \Phi(y) \Delta\bigl(\sigma_\tau^2(y) p_\tau(y)\bigr) \dd y \qquad \text{(applying I.B.P. twice)}.
\end{align*}
Substituting these into our integral equation gives:
\begin{align*}
\int_{\R^d} \Phi(y) \partial_\tau p_\tau(y) \dd y = \int_{\R^d} \Phi(y) \Bigl( - \nabla \cdot \bigl(f_\tau(y) p_\tau(y)\bigr) + \Delta\bigl(\sigma_\tau^2(y) p_\tau(y)\bigr) \Bigr) \dd y.
\end{align*}
Because this equality holds for \emph{any} smooth test function $\Phi$, the integrands must be identical almost everywhere, concluding the proof.
\end{proof}

The Fokker–Planck equation can be recast as describing the pure transport of mass corresponding to a deterministic ODE by factoring the Laplacian term.

\begin{proposition}
\label{prop:fokker-planck-transport}
The Fokker-Planck equation for $\dd Y_\tau = f_\tau(Y_\tau) \dd \tau + \sqrt{2} \sigma_\tau(Y_\tau) \dd B_\tau$ can be recast as the non-linear \emph{transport equation}:
\begin{align*}
\partial_\tau p_\tau(y) = - \nabla \cdot \bigl(u_\tau(y) p_\tau(y)\bigr) \quad \text{with velocity field~~~} u_\tau := f_\tau - \sigma_\tau^2 \nabla \log p_\tau - \nabla \sigma_\tau^2.
\end{align*}
Consequently, the trajectories of the deterministic ODE $\dd y_\tau = u_\tau(y_\tau) \dd \tau$ with $y_0 \sim p_0$ share the exact same marginals $p_\tau$.
\end{proposition}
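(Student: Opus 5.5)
The plan is to start from the Fokker--Planck equation of Proposition~\ref{prop:fokker-planck} and rewrite its second-order term as a divergence. We use the product rule $\nabla(\sigma_\tau^2 p_\tau) = p_\tau \nabla \sigma_\tau^2 + \sigma_\tau^2 \nabla p_\tau$. Wherever $p_\tau>0$ we have $\nabla p_\tau = p_\tau \nabla \log p_\tau$, so $\Delta(\sigma_\tau^2 p_\tau) = \nabla\cdot\nabla(\sigma_\tau^2 p_\tau)$ becomes
\begin{align*}
\Delta(\sigma_\tau^2 p_\tau) = \nabla \cdot \bigl( (\nabla \sigma_\tau^2 + \sigma_\tau^2 \nabla \log p_\tau)\, p_\tau \bigr).
\end{align*}
Substituting this into $\partial_\tau p_\tau = -\nabla\cdot(f_\tau p_\tau) + \Delta(\sigma_\tau^2 p_\tau)$ and collecting everything under a single divergence gives $\partial_\tau p_\tau = -\nabla \cdot (u_\tau p_\tau)$ with $u_\tau = f_\tau - \sigma_\tau^2 \nabla \log p_\tau - \nabla\sigma_\tau^2$. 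This is the claimed transport equation. The equation is nonlinear because $u_\tau$ itself depends on $p_\tau$.

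For the second claim, I would fix the velocity field $u_\tau$ computed from the SDE marginals. It is then a given, time-dependent, deterministic vector field. Let $q_\tau$ denote the law of $y_\tau$, where $\dd y_\tau = u_\tau(y_\tau)\dd\tau$ and $y_0\sim p_0$. The first step is to show that $q_\tau$ solves the linear continuity equation $\partial_\tau q_\tau = -\nabla\cdot(u_\tau q_\tau)$. I would do this by the same test-function argument as in the proof of Proposition~\ref{prop:fokker-planck}. For a compactly supported smooth $\Phi$, the ordinary chain rule (Itô's formula with zero diffusion) gives $\frac{\dd}{\dd\tau}\E[\Phi(y_\tau)] = \E[\langle\nabla\Phi(y_\tau),u_\tau(y_\tau)\rangle]$. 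One integration by parts then yields the weak form of the continuity equation.

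So $p_\tau$ and $q_\tau$ both solve the same linear continuity equation with the same vector field $u_\tau$ and the same initial datum $p_0$. It remains to conclude $q_\tau=p_\tau$ by uniqueness. The intended justification is that, when $u_\tau$ is regular enough (say locally Lipschitz in space with suitable growth), the flow map $\psi_\tau$ of the ODE is well-defined. Any weak solution of the continuity equation is then the pushforward $(\psi_\tau)_\#p_0$; this can be checked by testing against $\Phi\circ\psi_\tau^{-1}$ or via the method of characteristics. Both $p_\tau$ and $q_\tau$ are equal to this pushforward, so they coincide.

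The main obstacle is this uniqueness step. It requires regularity of the score $\nabla\log p_\tau$: positivity and smoothness of $p_\tau$, and Lipschitz bounds that allow the flow to exist globally and make the continuity equation well-posed. In keeping with the section's convention of leaving technicalities aside, I would assume $p_\tau$ is smooth, positive and has Lipschitz log-gradient. I would cite the standard Cauchy--Lipschitz and characteristics uniqueness theory for continuity equations rather than prove it.
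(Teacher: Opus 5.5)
Your proof is correct, and the first part is exactly the paper's computation: the paper writes $\Delta(\sigma_\tau^2 p_\tau)=\nabla\cdot\nabla(\sigma_\tau^2 p_\tau)$, puts everything under one divergence, and uses $\nabla(\sigma_\tau^2 p_\tau)/p_\tau=\sigma_\tau^2\nabla\log p_\tau+\nabla\sigma_\tau^2$, which is your product-rule step. The paper gives no argument for the ``consequently'' clause and reads it off from the continuity equation directly; your step fills that gap correctly. You freeze $u_\tau$, note that $p_\tau$ and the ODE marginals then solve the same \emph{linear} continuity equation, and conclude by uniqueness under Lipschitz regularity of $u_\tau$.
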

\begin{proof}
Starting from the Fokker-Planck equation:
\begin{align*}
\partial_\tau p_\tau
&= - \nabla \cdot \bigl(f_\tau p_\tau \bigr) + \Delta \bigl(\sigma_\tau^2 p_\tau \bigr) 
= - \nabla \cdot \bigl(f_\tau p_\tau - \nabla \bigl(\sigma_\tau^2 p_\tau \bigr) \bigr) \\
&= - \nabla \cdot \bigl(\{ f_\tau - \nabla \bigl(\sigma_\tau^2 p_\tau \bigr)/p_\tau \} p_\tau \bigr).
\end{align*}
The proof follows by noticing that $\nabla \bigl(\sigma_\tau^2 p_\tau \bigr)/p_\tau = \sigma_\tau^2 \nabla p_\tau /p_\tau + \nabla \sigma_\tau^2 = \sigma_\tau^2 \nabla \log p_\tau + \nabla \sigma_\tau^2$.
\end{proof}

\subsection{Backward processes and time reparametrization}

\subsubsection{Stochastic time-reversal (Anderson's theorem)}
Given a user-defined backward noise schedule $(b_\tau)_{0 \leq \tau \leq T}$, one can reinterpret the Fokker-Planck equation to explicitly derive the backward SDE mapping noise back to data (a consequence of Anderson's theorem).

\begin{theorem}[Backward stochastic dynamic]
\label{thm:backward-SDE}
If the solution to $\dd Y_\tau = f_\tau(Y_\tau) \dd \tau + \sqrt{2} \sigma_\tau(Y_\tau) \dd B_\tau$ has density $p_\tau(y)$, then the reversed-time solution to
\begin{align*}
\begin{cases}
\dd \overleftarrow{Y}_\tau = \overleftarrow{f}_\tau(\overleftarrow{Y}_\tau) \dd \tau + \sqrt{2} b_\tau(\overleftarrow{Y}_\tau) \dd B_\tau \\
\overleftarrow{Y}_0 \sim p_T(y) \dd y
\end{cases}
\end{align*}
with $\overleftarrow{f}_\tau := -f_{T-\tau} + \nabla ( \sigma_{T-\tau}^2 + b_\tau^2) + (\sigma_{T-\tau}^2 + b_\tau^2) \nabla \log p_{T-\tau}$ satisfies $\overleftarrow{Y}_\tau \sim p_{T-\tau}$.
\end{theorem}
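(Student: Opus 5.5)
The plan is to show that $q_\tau := p_{T-\tau}$ solves the Fokker--Planck equation of the backward SDE. We then conclude by uniqueness of solutions to that linear PDE with the given initial condition $q_0 = p_T$. Since the statement is at the same level of rigour as Proposition~\ref{prop:fokker-planck}, I take for granted that the backward SDE is well posed. I also assume that its marginal density is the unique solution of its Fokker--Planck equation with initial datum $p_T$; this is the one genuinely analytic input, and the notes leave it under the carpet.

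\textbf{Step 1 (equation satisfied by $q_\tau$).} By Proposition~\ref{prop:fokker-planck-transport}, the forward density satisfies $\partial_t p_t = -\nabla\cdot(u_t p_t)$ with $u_t = f_t - \sigma_t^2\nabla\log p_t - \nabla\sigma_t^2$. Differentiating $q_\tau = p_{T-\tau}$ by the chain rule gives
\begin{align*}
\partial_\tau q_\tau = -(\partial_t p_t)_{t=T-\tau} = \nabla\cdot\bigl(u_{T-\tau}\, q_\tau\bigr).
\end{align*}
So $q_\tau$ is transported by the velocity $-u_{T-\tau} = -f_{T-\tau} + \sigma_{T-\tau}^2\nabla\log q_\tau + \nabla\sigma_{T-\tau}^2$. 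This is the time-reversed deterministic ODE.

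\textbf{Step 2 (adding back the noise $b_\tau$).} Next I write the Fokker--Planck equation of the candidate backward SDE using Proposition~\ref{prop:fokker-planck}:
\begin{align*}
\partial_\tau r_\tau = -\nabla\cdot(\overleftarrow{f}_\tau r_\tau) + \Delta(b_\tau^2 r_\tau).
\end{align*}
Mirroring the proof of Proposition~\ref{prop:fokker-planck-transport}, I rewrite $\Delta(b_\tau^2 r) = \nabla\cdot\bigl((b_\tau^2\nabla\log r + \nabla b_\tau^2)\, r\bigr)$. This turns the equation into the transport equation
\begin{align*}
\partial_\tau r_\tau = -\nabla\cdot(v_\tau r_\tau), \qquad v_\tau = \overleftarrow{f}_\tau - b_\tau^2\nabla\log r_\tau - \nabla b_\tau^2 .
\end{align*}
I then check that $q_\tau$ solves this equation. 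Plugging $r_\tau = q_\tau = p_{T-\tau}$ together with the stated formula for $\overleftarrow{f}_\tau$, the $b_\tau^2$ terms cancel exactly. What remains is $v_\tau = -f_{T-\tau} + \nabla\sigma_{T-\tau}^2 + \sigma_{T-\tau}^2\nabla\log p_{T-\tau} = -u_{T-\tau}$. This is precisely the velocity found in Step 1, so $q_\tau$ solves the backward Fokker--Planck equation. The initial datum also matches, since $\overleftarrow{Y}_0 \sim p_T = q_0$.

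\textbf{Step 3 (identification) and obstacle.} The density of $\overleftarrow{Y}_\tau$ and $q_\tau$ solve the same linear Fokker--Planck equation from the same initial datum. Note that $\overleftarrow{f}$ involves the fixed function $\nabla\log p_{T-\tau}$, not the unknown, so the equation is indeed linear. Uniqueness then gives $\overleftarrow{Y}_\tau \sim p_{T-\tau}$.

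The algebra in Steps 1--2 is routine. The main obstacle is this uniqueness step, which needs regularity and growth of $\nabla\log p_{T-\tau}$: in general this drift is neither bounded nor globally Lipschitz, and it degenerates as $\tau\to T$. In keeping with the notes I would state this as an assumption, for instance smooth positive densities with enough decay for the integrations by parts, and point to Anderson's theorem or Haussmann--Pardoux for the rigorous version.
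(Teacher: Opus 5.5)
Your proposal is correct and follows essentially the paper's route. Both arguments check that $p_{T-\tau}$ solves the Fokker--Planck equation of the backward SDE via the identity $\Delta(\sigma^2 p) = \nabla\cdot\bigl((\sigma^2\nabla\log p + \nabla\sigma^2)p\bigr)$: the paper adds and subtracts $\Delta(b_\tau^2 p_{T-\tau})$, while you match transport velocities, which is the same computation; you also make explicit the uniqueness of the linear Fokker--Planck equation that the paper leaves implicit when it ``recognizes'' the backward PDE.
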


\begin{proof}
From Proposition~\ref{prop:fokker-planck}, the Fokker-Planck equation associated with the forward process is
\begin{align*}
0 = \partial_\tau p_\tau + \nabla \cdot \bigl(f_\tau p_\tau \bigr) - \Delta ( \sigma_\tau^2 p_\tau).
\end{align*}
In distribution, reversing the dynamic amounts to considering $\tau \mapsto p_{T-\tau}$ instead of $\tau \mapsto p_{\tau}$, which flips the sign of the time derivative and leaves the spatial ones unchanged:
\begin{align*}
0 = - \partial_\tau p_{T-\tau} + \nabla \cdot \bigl(f_{T-\tau} p_{T-\tau} \bigr) - \Delta ( \sigma_{T-\tau}^2 p_{T-\tau} ).
\end{align*}
To recognize an instance of the Fokker-Planck equation with a new chosen diffusive term $b_\tau^2$, we cleverly add and subtract it:
\begin{align*}
0 &= - \partial_\tau p_{T-\tau} + \left( \nabla \cdot \bigl(f_{T-\tau} p_{T-\tau} \bigr) - \Delta \bigl( (\sigma_{T-\tau}^2+b_\tau^2) p_{T-\tau} \bigr) \right) + \Delta ( b_\tau^2 p_{T-\tau} ).
\end{align*}
Using the identity $\Delta (\sigma^2 p) = \nabla \cdot \bigl( (\sigma^2 \nabla \log p + \nabla \sigma^2) p \bigr)$, the middle term becomes:
\begin{align*}
0 &= - \partial_\tau p_{T-\tau} - \nabla \cdot \bigl(\overleftarrow{f}_\tau p_{T-\tau} \bigr) + \Delta ( b_\tau^2 p_{T-\tau} ),
\end{align*}
where $\overleftarrow{f}_\tau := -f_{T-\tau} + \nabla ( \sigma_{T-\tau}^2 + b_\tau^2) + (\sigma_{T-\tau}^2 + b_\tau^2) \nabla \log p_{T-\tau}$. We recognize exactly the Fokker-Planck PDE characterizing the announced backward SDE.
\end{proof}

\subsubsection{Time-reversal of the O.U. process and reparametrization to $[0,1]$}

Applying Theorem~\ref{thm:backward-SDE} to our Ornstein-Uhlenbeck process where $f_\tau(y) = -\lambda y$ and $\sigma_\tau(y) = \sigma$ (spatially constant), and selecting the symmetric backward diffusion $b_\tau = \sigma$, we obtain the explicit backward generative SDE:
\begin{align*}
\dd \overleftarrow{Y}_\tau = \Big[ \lambda \overleftarrow{Y}_\tau + 2\sigma^2 \nabla \log p_{T-\tau}(\overleftarrow{Y}_\tau) \Big] \dd \tau + \sqrt{2}\sigma \dd B_\tau.
\end{align*}
This equation explicitly tells us how to build a sampling method to generate data: start from pure noise $\overleftarrow{Y}_0 \sim p_T \approx \mathcal{N}(0, \frac{\sigma^2}{\lambda}\mathrm{Id})$ and simulate this SDE over $\tau \in [0,T]$ using a numerical solver. The only unknown term required to run the simulation is the \emph{score} function $\nabla \log p_{T-\tau}(y)$.

However, tracking a ``forward process'' on $\tau \in [0,T]$ and a separate backward process tracking $T-\tau$ is notationally cumbersome and prone to implementation errors. To unify this with modern mathematical approaches, we globally reparametrize the time interval to $t \in [0,1]$ via the linear mapping $t = 1 - \tau/T$. Thus $t=0$ corresponds to pure noise ($\tau=T$ in the backward process), and $t=1$ corresponds to the data ($\tau=0$). 

The generative process $X_t := \overleftarrow{Y}_{tT}$ then runs causally forward from $t=0$ to $t=1$, completely eliminating the need for backward-time PDE tracking. Applying the chain rule $\dd \tau = T \dd t$, and Brownian scaling $\dd B_{\tau} = \dd B_{tT} = \sqrt{T} \dd \tilde{B}_t$ where $\tilde{B}_t$ is a standard Brownian motion on $[0,1]$, we obtain the generative SDE natively parameterized on $t \in [0,1]$:
\begin{equation}
\label{eq:reparametrized-SGM}
\dd X_t = T \Big[ \lambda X_t + 2\sigma^2 \nabla \log p_{T(1-t)}(X_t) \Big] \dd t + \sqrt{2T}\sigma \dd \tilde{B}_t.
\end{equation}
This allows us to discard the historical backward-time formulation entirely and brings us directly to our unified forward-time generative framework.

\section{Building target interpolating dynamics}
\label{sec:common-setup}

We now replace the initial data-destroying processes and build the generative process $X_t$ directly for time $t \in [0,1]$. We start from a tractable base distribution $p_0 = \mathcal{N}(0,\mathrm{Id})$ at $t=0$, and aim to reach the unknown data distribution $p_1 = p^\star$ at $t=1$. Our goal is to find a velocity field $v_t$ that transports mass from $p_0$ to $p_1$ through a causal generative ODE or SDE.

\subsection{Flow matching}

\textbf{1. Interpolating path.} A natural strategy to build such a dynamic is to construct an explicit \emph{kinematic interpolating path} $X^\circ_t = I_t(Z^\circ)$ where $Z^\circ \sim \pi$ is a latent variable coupling the noise and the data on a static probability space. For instance, we may choose $Z^\circ = (\xi^\circ, X_1^\circ) \sim p_0 \otimes p^\star$ and define the path $I_t(Z^\circ) := (1-t)\xi^\circ + t X_1^\circ$. Let $p_t$ be the marginal distribution of $X^\circ_t$. 

\textbf{2. Velocity field.} For any smooth test function $\phi: \mathbb{R}^d \to \mathbb{R}$, the chain rule yields
\begin{align*}
\partial_t \mathbb{E}\big[\phi(X^\circ_t)\big]
&= \mathbb{E}\big[\langle \nabla \phi(I_t(Z^\circ)), \dot{I}_t(Z^\circ) \rangle\big] \\
&= \mathbb{E}\Big[\mathbb{E}\big[\langle \nabla \phi(I_t(Z^\circ)), \dot{I}_t(Z^\circ) \rangle \mid X^\circ_t\big]\Big] \\
&= \mathbb{E}\Big[\big\langle \nabla \phi(X^\circ_t), \mathbb{E}[\dot{I}_t(Z^\circ) \mid X^\circ_t] \big\rangle\Big].
\end{align*}
On the other hand, if we assume that $p_t$ satisfies a continuity equation $\partial_t p_t + \nabla \cdot (v_t p_t) = 0$, integration by parts gives $\partial_t \mathbb{E}[\phi(X^\circ_t)] = \mathbb{E}[\langle \nabla \phi(X^\circ_t), v_t(X^\circ_t) \rangle]$. 
Identifying the two expressions, we see that one valid velocity field is exactly the conditional expectation:
\begin{align*}
v_t(x) = \mathbb{E}\big[\dot{I}_t(Z^\circ) \mid X^\circ_t = x\big].
\end{align*}
Thus, the deterministic ODE $\dd X_t = v_t(X_t) \dd t$ exactly simulates this flow of marginals from pure noise to pure data.

\textbf{3. Population variational problem.} Because $v_t(x)$ is defined as a conditional expectation, it is the unique minimizer of the $L^2$ projection problem at the population level:
\begin{align*}
v_t = \argmin_u \mathbb{E} \Big[ \big\| u(X^\circ_t) - \dot{I}_t(Z^\circ) \big\|^2 \Big].
\end{align*}

\textbf{4. Empirical approximation.} We can approximate this ideal vector field by learning a parametric neural network $\hat{v}_t$ from an empirical dataset. By construction, the latent variables $Z^{\circ(i)} = (\xi^{\circ(i)}, X_1^{(i)})$ are extremely easy to sample: we simply pair a dataset observation $X_1^{(i)} \sim p^\star$ with an independent noise sample $\xi^{\circ(i)} \sim \mathcal{N}(0, \mathrm{Id})$. The empirical variational problem is then directly optimized via the regression loss:
\begin{align*}
\mathcal{L}(\hat{v}_t) = \frac{1}{n} \sum_{i=1}^n \big\| \hat{v}_t(I_t(Z^{\circ(i)})) - \dot{I}_t(Z^{\circ(i)}) \big\|^2.
\end{align*}

\subsection{Gaussian interpolating paths}

\textbf{1. Interpolating path.} Alternatively, we can construct a path of conditional distributions that are explicitly Gaussian, ensuring they are easily samplable. Let $Z^\circ \sim p^\star$ be the latent target data, and let $\xi^\circ \sim \mathcal{N}(0,\mathrm{Id})$ be an independent base noise. We define the interpolating path as:
\begin{align*}
X^\circ_t = m_t(Z^\circ) + \sigma_t \xi^\circ,
\end{align*}
meaning $p_t(\cdot \mid Z^\circ) = \mathcal{N}(m_t(Z^\circ), \sigma_t^2 \mathrm{Id})$. To interpolate from noise to data, we set $m_0(z)=0, \sigma_0=1$ and $m_1(z)=z, \sigma_1=0$.

\textbf{2. Velocity field.} The exact same calculation via test functions as in the previous section yields a valid ODE conditional velocity field:
\begin{align*}
v_t^{\text{ODE}}(x \mid Z^\circ) = \dot{m}_t(Z^\circ) + \frac{\dot{\sigma}_t}{\sigma_t}(x - m_t(Z^\circ)).
\end{align*}
We can also add a diffusion term $\sqrt{2} b_t \dd B_t$ to construct a generative SDE. To maintain the exact same marginals $p_t$, Proposition~\ref{prop:fokker-planck-transport} dictates that the equivalent deterministic transport velocity of our SDE is $v_t^{\text{trans}} = a_t - b_t^2 \nabla \log p_t$. Equating this to $v_t^{\text{ODE}}$, the required conditional SDE drift is $a_t(x \mid Z^\circ) = v_t^{\text{ODE}}(x \mid Z^\circ) + b_t^2 \nabla \log p_t(x \mid Z^\circ)$.

By applying the score trick $\Delta p = \nabla \cdot (p \nabla \log p)$ to the known conditional Gaussian, the conditional score is trivially $\nabla \log p_t(x \mid Z^\circ) = - \frac{x - m_t(Z^\circ)}{\sigma_t^2}$. Gathering these terms, the unified conditional drift mapping noise to data becomes:
\begin{align*}
a_t(x \mid Z^\circ) = \dot{m}_t(Z^\circ) + \left(\frac{\dot{\sigma}_t}{\sigma_t} - \frac{b_t^2}{\sigma_t^2}\right)\big(x - m_t(Z^\circ)\big).
\end{align*}
Marginalizing over $Z^\circ$ yields the exact target drift for the causal generative SDE $\dd X_t = a_t(X_t) \dd t + \sqrt{2} b_t \dd B_t$:
\begin{equation}\label{eq:exact-forward-drift}
a_t(x) = \mathbb{E}\big[a_t(X^\circ_t \mid Z^\circ) \mid X^\circ_t = x\big].
\end{equation}

\textbf{3. Population variational problem.} Because the exact drift is a conditional expectation, it solves the population $L^2$ regression problem:
\begin{align*}
a_t = \argmin_{u} \mathbb{E}_{Z^\circ, \xi^\circ} \Big[ \big\| u(X^\circ_t) - a_t(X^\circ_t \mid Z^\circ) \big\|^2 \Big].
\end{align*}

\textbf{4. Empirical approximation.} Since our data $Z^{\circ(i)} \sim p^\star$ and the base noise $\xi^{\circ(i)} \sim \mathcal{N}(0,\mathrm{Id})$ are easily samplable by construction, we form the empirical samples $X^{\circ(i)}_t = m_t(Z^{\circ(i)}) + \sigma_t \xi^{\circ(i)}$ and compute the exact conditional drift targets $a_t(X_t^{\circ(i)} \mid Z^{\circ(i)})$. We learn the neural network drift $\hat{a}_t$ via the explicit empirical loss:
\begin{align*}
\mathcal{L}(\hat{a}_t) = \frac{1}{n} \sum_{i=1}^n \mathbb{E}_{\xi^\circ} \left[ \big\| \hat{a}_t\big(X^{\circ(i)}_t\big) - a_t\big(X^{\circ(i)}_t \mid Z^{\circ(i)}\big) \big\|^2 \right].
\end{align*}

\subsection{Recovering Score-based generative models (SGM)}

\textbf{1. Interpolating path.} Under this new $[0,1]$ forward-time paradigm, the classic OU-based SGM simply corresponds to selecting the specific Gaussian mean and variance schedules:
\begin{align*}
m_t(Z^\circ) = e^{-\lambda T(1-t)} Z^\circ, \qquad \sigma_t^2 = \frac{\sigma^2}{\lambda}\left(1 - e^{-2\lambda T(1-t)}\right).
\end{align*}
For $\lambda T \gg 1$, the initial distribution $p_0$ is approximately $\mathcal{N}(0, \frac{\sigma^2}{\lambda}\mathrm{Id})$. Because the conditional distribution is explicitly Gaussian, the exact marginal probability density formula is simply:
\begin{align*}
p_t(x) = \int_{\R^d} \cN\left(x; e^{-\lambda T(1-t)} z, \frac{\sigma^2}{\lambda}\left(1 - e^{-2\lambda T(1-t)}\right) \mathrm{Id}\right) p^\star(z) \dd z.
\end{align*}

\textbf{2. Velocity field.} By choosing any generative diffusion $b_t \geq 0$ and simulating the SDE $\dd X_t = a_t(X_t) \dd t + \sqrt{2} b_t \dd B_t$ forward from $t=0$ to $t=1$, we recover the time-reversed OU sampler \eqref{eq:reparametrized-SGM} without invoking any backward-time PDEs during implementation. 
Notice that the explicit conditional drift mapping \eqref{eq:exact-forward-drift} can be entirely re-written in terms of the score trick $\nabla \log p_t$. Let $m_t(z) = \alpha_t z$. Using $v_t^{\text{ODE}}(x) = \frac{\dot{\alpha}_t}{\alpha_t} x + (\sigma_t^2 \frac{\dot{\alpha}_t}{\alpha_t} - \sigma_t \dot{\sigma}_t) \nabla \log p_t(x)$, the explicit marginalized SDE drift is:
\begin{align*}
a_t(x) = \frac{\dot{\alpha}_t}{\alpha_t} x + (\sigma_{\text{fwd},t}^2 + b_t^2) \nabla \log p_t(x),
\end{align*}
where $\sigma_{\text{fwd},t}^2 := \sigma_t^2 \frac{\dot{\alpha}_t}{\alpha_t} - \sigma_t \dot{\sigma}_t \geq 0$ represents the rate of variance destruction of the forward path. 

\textbf{3. Population variational problem.} Because $b_t$ and the variance destruction rate are known scalars, the population variational problem shifts entirely to estimating the score:
\begin{align*}
s_t = \argmin_{s} \mathbb{E} \Big[ \big\| s(X_t^\circ) - \nabla \log p_t(X_t^\circ) \big\|^2 \Big].
\end{align*}

\textbf{4. Empirical approximation.} As will be rigorously proven in Section~\ref{sec:denoisingscorematching} through Denoising Score Matching, we can bypass the intractable marginal score $\nabla \log p_t$. Because the pairs $(Z^{\circ(i)}, \xi^{\circ(i)})$ are trivially samplable, we can replace the target with the explicit conditional score $-\xi^{\circ(i)} / \sigma_t$. This yields the highly efficient empirical regression loss:
\begin{align*}
\mathcal{L}(\hat{s}_t) = \frac{1}{n} \sum_{i=1}^n \mathbb{E}_{\xi^\circ} \left[ \left\| \hat{s}_t\big(X^{\circ(i)}_t, t\big) + \frac{\xi^{\circ(i)}}{\sigma_t} \right\|^2 \right].
\end{align*}
Once the score network $\hat{s}_t$ is learned, it is plugged directly back into the analytic formula $\hat{a}_t(x) = \frac{\dot{\alpha}_t}{\alpha_t} x + (\sigma_{\text{fwd},t}^2 + b_t^2) \hat{s}_t(x)$ to simulate the generative SDE.

\section{Score matching}

Let us rigorously present how to estimate the \emph{score} function $(x,t) \mapsto \nabla \log p_t(x)$. The fundamental metric is the \emph{Fisher divergence}:
\begin{align*}
\mathrm{Fisher}(p_t \mid \widehat{p}_t)
:= \int_{\R^d} \Vert \nabla \log p_t(x) - \nabla \log \widehat{p}_t(x) \Vert^2 p_t(x) \dd x.
\end{align*}
At first glance, this loss cannot be trivially estimated from samples because of the dependence on the true, unknown score $\nabla \log p_t(x)$ inside the expectation.

\subsection{Vanilla score matching}

The main trick for score matching dates back to~\cite{hyvarinen2005estimation}.

\begin{proposition}[Vanilla score trick]
\label{prop:vanilla-score-trick}
For any smooth density $p : \mathbb{R}^d \to \mathbb{R}_+$, there exists a constant $c_p \geq 0$ such that for all smooth vector fields $s:\mathbb{R}^d \to \mathbb{R}^d$ decaying sufficiently fast at infinity,
\begin{align*}
\mathbb{E}_{X \sim p} \left[ \Vert \nabla \log p(X) - s(X) \Vert^2 \right]
= c_p + \mathbb{E}_{X \sim p} \left[ 2 \nabla \cdot s(X) + \Vert s(X) \Vert^2 \right].
\end{align*}
\end{proposition}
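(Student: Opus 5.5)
Expand the square and integrate the cross term by parts. Writing $\nabla \log p = \nabla p / p$, we have for each $x$
\begin{align*}
\Vert \nabla \log p(x) - s(x) \Vert^2 = \Vert \nabla \log p(x) \Vert^2 - 2 \langle \nabla \log p(x), s(x) \rangle + \Vert s(x) \Vert^2 .
\end{align*}
Integrating against $p$ and setting $c_p := \mathbb{E}_{X\sim p}[\Vert \nabla \log p(X)\Vert^2] = \mathrm{Fisher}$-information of $p$, which is nonnegative (possibly $+\infty$; we assume it finite, as is implicit in the statement), we reduce the claim to identifying the cross term. The terms $c_p$ and $\mathbb{E}[\Vert s(X)\Vert^2]$ already have the required form, and $c_p$ does not depend on $s$.

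The key step is the identity
\begin{align*}
\mathbb{E}_{X\sim p}\bigl[\langle \nabla \log p(X), s(X)\rangle\bigr] = \int_{\R^d} \langle \nabla p(x), s(x) \rangle \dd x = - \int_{\R^d} p(x)\, \nabla \cdot s(x) \dd x = -\mathbb{E}_{X\sim p}[\nabla \cdot s(X)],
\end{align*}
where the first equality is the cancellation $p \cdot \nabla p / p = \nabla p$ (valid on $\{p>0\}$, and $\nabla p = 0$ a.e.\ on $\{p=0\}$ for smooth nonnegative $p$), and the second is the divergence theorem applied to the vector field $p\,s$ on a ball $B(0,R)$, followed by $R\to\infty$. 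Multiplying by $-2$ gives $+2\,\mathbb{E}[\nabla\cdot s(X)]$, and summing the three terms yields the statement.

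The only delicate point is the integration by parts, namely that the boundary term $\int_{\partial B(0,R)} p\, \langle s, \nu\rangle \dd S$ vanishes as $R \to \infty$ and that all integrals are finite. This is exactly what the hypothesis ``decaying sufficiently fast at infinity'' is meant to cover. I would make it precise by assuming $p\,s \to 0$ fast enough at infinity (e.g.\ $\sup_{\Vert x\Vert = R} p\Vert s\Vert = o(R^{1-d})$), together with $\mathbb{E}\Vert s\Vert^2 < \infty$ and $\mathbb{E}|\nabla\cdot s| < \infty$. Alternatively, one can multiply $s$ by a smooth cutoff $\chi_R$ and pass to the limit by dominated convergence, in the same spirit as the test-function arguments in the proof of Proposition~\ref{prop:fokker-planck}.
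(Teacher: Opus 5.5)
Your proposal is correct and follows the paper's proof exactly: expand the square, take $c_p$ to be the Fisher information term, and integrate the cross term by parts via $p\,\nabla\log p=\nabla p$. Your explicit conditions for the vanishing boundary term and the finiteness of $c_p$ are a harmless sharpening of what the paper leaves implicit in ``decaying sufficiently fast at infinity.''
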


\begin{proof}
We expand the left-hand side to get
\begin{align*}
\mathbb{E}_{X \sim p} \left[ \Vert \nabla \log p(X) - s(X) \Vert^2 \right]
= \mathbb{E}_{X \sim p} \left[ \Vert \nabla \log p(X) \Vert^2 \right] - 2 \mathbb{E}_{X \sim p} \left[ \langle \nabla \log p(X), s(X) \rangle \right] + \mathbb{E}_{X \sim p} \left[ \Vert s(X) \Vert^2 \right].
\end{align*}
The first term forms the constant $c_p$. The middle term can be integrated by parts: 
\begin{align*}
-2 \int_{\mathbb{R}^d} \langle \nabla \log p(x), s(x) \rangle p(x) \dd x
&= -2 \int_{\mathbb{R}^d} \langle \nabla p(x), s(x) \rangle \dd x \\
&= 2 \int_{\mathbb{R}^d} p(x) \nabla \cdot s(x) \dd x \\
&= 2 \E_{X \sim p} [ \nabla \cdot s(X) ],
\end{align*}
which yields the result.
\end{proof}

From there, one can fit a parametric set of functions $s_\theta$ (typically neural networks) to learn the score via empirical risk minimization. However, evaluating the divergence $\nabla \cdot s_\theta$ in high dimensions is notoriously costly.

\subsection{Denoising score matching}\label{sec:denoisingscorematching}

To bypass the divergence computation entirely, we can take advantage of the \emph{convolutional} structure of the Gaussian interpolating paths~\cite{vincent2011connection}.

\begin{proposition}[Denoising score trick]
\label{prop:denoising-score-trick}
If $X \sim p(x) \dd x$ and $\varepsilon \sim g(\varepsilon) \dd \varepsilon$ are independent, then $X_\varepsilon := X+\varepsilon \sim (p \ast g)(x) \dd x$.
Furthermore, there exists a constant $c'_{p,g}$ such that for all smooth $s : \mathbb{R}^d \to \mathbb{R}^d$:
\begin{align*}
\mathbb{E}_{X_\varepsilon \sim p \ast g} \left[ \Vert \nabla \log (p \ast g)(X_\varepsilon) - s(X_\varepsilon) \Vert^2 \right]
= c'_{p,g} + \mathbb{E}_{(X,\varepsilon) \sim p \otimes g} \left[ \Vert \nabla \log g (\varepsilon) - s(X + \varepsilon) \Vert^2 \right].
\end{align*}
\end{proposition}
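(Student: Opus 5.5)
The first claim follows from a change of variables. For a bounded measurable $\phi$, independence gives
\begin{align*}
\E[\phi(X+\varepsilon)] = \iint \phi(x+e)\, p(x) g(e) \dd x \dd e = \int \phi(y) \Bigl(\int p(y-e) g(e) \dd e\Bigr) \dd y .
\end{align*}
The inner integral is $(p\ast g)(y)$, so $X_\varepsilon$ has density $p \ast g$. Write $q := p\ast g$ for short.

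For the identity, I will expand both squares and show that they differ only by $s$-independent terms. The plan mirrors the proof of Proposition~\ref{prop:vanilla-score-trick}.

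\textbf{Left-hand side.} It equals
\begin{align*}
\E_q\|\nabla \log q\|^2 - 2\,\E_q\langle \nabla\log q(X_\varepsilon), s(X_\varepsilon)\rangle + \E\|s(X_\varepsilon)\|^2 .
\end{align*}

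\textbf{Right-hand side.} The expectation equals
\begin{align*}
\E\|\nabla \log g(\varepsilon)\|^2 - 2\,\E\langle \nabla \log g(\varepsilon), s(X+\varepsilon)\rangle + \E\|s(X+\varepsilon)\|^2 .
\end{align*}

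The quadratic terms in $s$ coincide because $X+\varepsilon \sim q$. It therefore suffices to prove that the cross terms are equal:
\begin{align*}
\E\bigl[\langle \nabla\log q(X_\varepsilon), s(X_\varepsilon)\rangle\bigr] = \E\bigl[\langle \nabla \log g(\varepsilon), s(X+\varepsilon)\rangle\bigr].
\end{align*}
Granting this, the constant is $c'_{p,g} = \E_q\|\nabla\log q\|^2 - \E_g\|\nabla\log g\|^2$, which may be negative. This is fine, since no sign is claimed.

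The key step, and the only nontrivial one, is the cross-term identity. I will first write $\nabla \log q \cdot q = \nabla q$, so the left side becomes $\int \langle \nabla q(y), s(y)\rangle \dd y$. Next I differentiate under the integral in $q(y) = \int p(x) g(y-x)\dd x$, which gives $\nabla q(y) = \int p(x)\nabla g(y-x)\dd x$. Substituting and using Fubini yields
\begin{align*}
\iint p(x)\, \langle \nabla g(y-x), s(y)\rangle \dd x \dd y .
\end{align*}
I then write $\nabla g = g\nabla\log g$ and change variables $e = y-x$. This gives $\iint p(x) g(e) \langle \nabla\log g(e), s(x+e)\rangle \dd x\dd e$, which is exactly the right-hand cross term.

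The main obstacle is justifying these analytic manipulations: differentiating under the integral, applying Fubini, and ensuring finiteness of $\E_q\|\nabla\log q\|^2$ and $\E_g\|\nabla \log g\|^2$ so that the constant is well defined. In keeping with the paper's convention, I will assume $g$ is smooth with integrable gradient and that $s$ has enough integrability. These assumptions hold in the Gaussian case of interest, where $\nabla\log g(\varepsilon) = -\varepsilon/\sigma_t^2$. This recovers the target $-\xi^\circ/\sigma_t$ of the empirical loss announced in the previous section.

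Finally, I note that, unlike Proposition~\ref{prop:vanilla-score-trick}, no integration by parts on $s$ is needed. Hence no divergence of $s$ appears, which is the point of the denoising trick.
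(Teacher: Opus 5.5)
Your proof is correct, but it takes a different route from the paper. The paper first applies the vanilla score trick (Proposition~\ref{prop:vanilla-score-trick}) to $p\ast g$, which creates the term $2\,\E[\nabla\cdot s(X+\varepsilon)]$. It then unfolds the convolution, uses Fubini, integrates by parts in the noise variable to move the derivative from $s$ back onto $g$, and completes the square.

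You never put a derivative on $s$ at all. You match the quadratic terms via $X+\varepsilon\sim p\ast g$ and obtain the cross terms directly from $(p\ast g)\nabla\log(p\ast g)=\nabla(p\ast g)=p\ast\nabla g$. This is essentially Vincent's original argument.

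The paper's route is a short corollary of the previous proposition. However, it passes through two integrations by parts whose net effect is exactly the identity $\nabla(p\ast g)=p\ast\nabla g$ that you use directly. As a result, it implicitly needs $s$ to be differentiable and to decay fast enough for the boundary terms to vanish.

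Your route needs only regularity of $g$, so that the convolution can be differentiated under the integral sign. Beyond that it needs $\E\|s(X_\varepsilon)\|^2<\infty$ and $\E\|\nabla\log g(\varepsilon)\|^2<\infty$, and by Cauchy--Schwarz these justify every use of Fubini. The identity therefore holds for any measurable square-integrable $s$, which is the natural class for the regression.

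Your computation also gives more than the statement:
\begin{itemize}
\item Dividing $\nabla(p\ast g)(y)=\int p(x)g(y-x)\nabla\log g(y-x)\dd x$ by $(p\ast g)(y)$ gives $\nabla\log(p\ast g)(X_\varepsilon)=\E[\nabla\log g(\varepsilon)\mid X_\varepsilon]$. This is the Tweedie remark that follows the proposition.
\item Jensen's inequality then gives $\E_{p\ast g}\|\nabla\log(p\ast g)\|^2\le\E_g\|\nabla\log g\|^2$. So finiteness of that first constant need not be assumed separately, and in fact $c'_{p,g}\le 0$.
\end{itemize}
You also prove the first claim on the law of $X+\varepsilon$, which the paper takes for granted.
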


\begin{proof}
Applying Proposition~\ref{prop:vanilla-score-trick} to $X_\varepsilon \sim p \ast g$, we have an expectation involving $2 \nabla \cdot s(X + \varepsilon)$. Expanding this integral explicitly and applying Fubini's theorem:
\begin{align*}
2 \int_{\mathbb{R}^d} \biggl( \int_{\mathbb{R}^d} \nabla_y \cdot s(x+y) g(y) \dd y \biggr) p(x) \dd x
&= -2 \int_{\mathbb{R}^d} \biggl( \int_{\mathbb{R}^d} \langle s(x+y), \nabla g(y) \rangle \dd y \biggr) p(x) \dd x \\
&= -2 \int_{\mathbb{R}^d} \biggl( \int_{\mathbb{R}^d} \langle s(x+y), \nabla \log g(y) \rangle g(y) \dd y \biggr) p(x) \dd x \\
&= -2 \mathbb{E}_{(X,\varepsilon) \sim p \otimes g} \left[ \langle \nabla \log g(\varepsilon), s(X+\varepsilon) \rangle \right],
\end{align*}
where we integrated by parts with respect to $y$. We then complete the square:
\begin{align*}
\mathbb{E}\left[ -2 \langle \nabla \log g(\varepsilon), s(X+\varepsilon) \rangle + \Vert s(X+\varepsilon) \Vert^2 \right]
= \mathbb{E}\left[ \Vert \nabla \log g(\varepsilon) - s(X+\varepsilon) \Vert^2 \right] - \mathbb{E}\left[ \Vert \nabla \log g(\varepsilon) \Vert^2 \right].
\end{align*}
The last term depends only on $g$ and is absorbed into the constant $c'_{p,g}$.
\end{proof}

As desired, the expression given by Proposition~\ref{prop:denoising-score-trick} does not involve any derivative of the candidate score $s$. Instead, the derivative falls on the known score $\nabla \log g$ of the explicit noise. This formally verifies the empirical loss introduced in Section~\ref{sec:common-setup} for Score-Based Generative Models.

\subsubsection{Application to Gaussian interpolating paths}
We apply this trick to the density $p_t$ associated with the Gaussian path $X^\circ_t = m_t(Z^\circ) + \sigma_t \xi^\circ$. 

Its marginal distribution can be written as a convolution $p_t = q_t \ast g_t$, where $q_t$ is the scaled distribution of the deterministic center $m_t(Z^\circ)$, and $g_t$ is the noise distribution of $\varepsilon = \sigma_t \xi^\circ \sim \mathcal{N}(0,\sigma_t^2 \mathrm{Id})$. Hence, $\nabla \log g_t(\varepsilon) = -\varepsilon/\sigma_t^2$.

The time-integrated loss minimization simplifies to
\begin{align*}
\theta
&\in \argmin_{\theta} \int_0^1 w(t) \mathbb{E} \left[ \Vert \nabla \log g_t(\sigma_t \xi^\circ) - s_\theta(X^\circ_t, t) \Vert^2 \right] \dd t \\
&= \argmin_{\theta} \int_0^1 w(t) \mathbb{E} \left[ \left\Vert - \frac{\sigma_t \xi^\circ}{\sigma_t^2} - s_\theta\big(m_t(Z^\circ) + \sigma_t \xi^\circ, t\big) \right\Vert^2 \right] \dd t \\
&= \argmin_{\theta} \int_0^1 w(t) \mathbb{E} \left[ \left\Vert \frac{\xi^\circ}{\sigma_t} + s_\theta(X^\circ_t, t) \right\Vert^2 \right] \dd t.
\end{align*}

\begin{remark}[Tweedie's formula: learning the noise]
In the Gaussian case, a fitted score $s_\theta(\cdot,t)$ is trained to directly match the scaled noise $-\xi^\circ/\sigma_t$ injected into the observation $X^\circ_t$. Looking back at Proposition~\ref{prop:denoising-score-trick}, the optimal minimizer is indeed characterized by the conditional expectation $s^\ast(X^\circ_t) = \mathbb{E} [ \nabla \log g_t (\sigma_t \xi^\circ) \mid X^\circ_t ]$.
\end{remark}

\section{Sampling from a learnt score}
\label{sec:sampling-from-given-score}

\subsection{Exact Kullback-Leibler dynamics}
The following result allows us to track the KL divergence between two probability flows driven by generic transport equations.

\begin{proposition}
\label{prop:kl-of-transport}
Let $(p_t)_{0 \leq t \leq 1}$ and $(q_t)_{0 \leq t \leq 1}$ be two families of smooth probability densities on $\R^d$, respectively driven by the transport equations $\partial_t p_t = - \nabla \cdot (u_t p_t)$ and $\partial_t q_t = - \nabla \cdot (\hat{u}_t q_t)$. Then we have
\begin{align*}
\frac{\dd}{\dd t} \mathrm{KL}(p_t \mid q_t)
&= \int_{\R^d} \left\langle u_t(x) - \hat{u}_t(x) , \nabla \log\left(\frac{p_t(x)}{q_t(x)}\right) \right\rangle p_t(x) \dd x.
\end{align*}
\end{proposition}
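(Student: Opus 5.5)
We differentiate $\mathrm{KL}(p_t \mid q_t) = \int_{\R^d} p_t \log(p_t/q_t) \dd x$ under the integral sign, in the same spirit as the proof of Proposition~\ref{prop:fokker-planck}. We assume enough smoothness and decay at infinity to exchange $\frac{\dd}{\dd t}$ and $\int$ and to discard all boundary terms in integrations by parts. The product rule gives
\begin{align*}
\frac{\dd}{\dd t} \mathrm{KL}(p_t \mid q_t)
= \int_{\R^d} \partial_t p_t \log\frac{p_t}{q_t} \dd x + \int_{\R^d} \partial_t p_t \dd x - \int_{\R^d} \frac{p_t}{q_t} \partial_t q_t \dd x .
\end{align*}
The middle term vanishes, since $\int p_t = 1$ for all $t$; equivalently, it is the integral of a divergence.

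Next we substitute the two transport equations and integrate each remaining term by parts once. For the first term,
\begin{align*}
-\int_{\R^d} \nabla\cdot(u_t p_t) \log\frac{p_t}{q_t} \dd x = \int_{\R^d} \Big\langle u_t, \nabla \log\frac{p_t}{q_t}\Big\rangle p_t \dd x .
\end{align*}
For the third term,
\begin{align*}
\int_{\R^d} \frac{p_t}{q_t} \nabla\cdot(\hat u_t q_t) \dd x = -\int_{\R^d} \Big\langle \hat u_t, \nabla \frac{p_t}{q_t}\Big\rangle q_t \dd x .
\end{align*}
We then rewrite $q_t \nabla (p_t/q_t) = (p_t/q_t)\, q_t \nabla \log(p_t/q_t) = p_t \nabla \log(p_t/q_t)$, so the third term equals $-\int \langle \hat u_t, \nabla\log(p_t/q_t)\rangle p_t \dd x$. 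Adding the first and third terms yields exactly the claimed formula.

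The only delicate point is justifying the differentiation under the integral and the vanishing of boundary terms. This needs integrability of $p_t\log(p_t/q_t)$ and of $u_t p_t \log(p_t/q_t)$, together with the fact that $\hat u_t p_t$ decays at infinity. In keeping with the notes' convention of sweeping such issues under the carpet, we would state these as standing assumptions (for instance, positivity of $q_t$ and sufficiently fast decay) and not verify them in detail. The computation itself is routine.
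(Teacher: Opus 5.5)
Your proposal is correct and follows the paper's proof essentially step for step. You differentiate under the integral, drop $\int \partial_t p_t \dd x$ by conservation of mass, substitute the two transport equations, integrate by parts once in each remaining term, and use $q_t \nabla(p_t/q_t) = p_t \nabla \log(p_t/q_t)$ to rewrite the third term against $p_t$; listing the needed regularity (positivity of $q_t$, integrability, decay of $u_t p_t \log(p_t/q_t)$ and $\hat u_t p_t$) as standing assumptions matches the paper's convention.
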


\begin{proof}
Assuming we may interchange the time derivative and the integral, we expand the derivative:
\begin{align*}
\frac{\dd}{\dd t} \mathrm{KL}(p_t \mid q_t)
&= \int_{\R^d} \partial_t \left( p_t(x) \log p_t(x) - p_t(x) \log q_t(x) \right) \dd x \\
&= \int_{\R^d} \bigl( \partial_t p_t(x) (1 + \log p_t(x)) - \partial_t p_t(x) \log q_t(x) - \frac{p_t(x)}{q_t(x)} \partial_t q_t(x) \bigr) \dd x \\
&= \int_{\R^d} \partial_t p_t(x) \dd x + \int_{\R^d} \partial_t p_t(x) \log \left(\frac{p_t(x)}{q_t(x)}\right) \dd x - \int_{\R^d} \frac{p_t(x)}{q_t(x)} \partial_t q_t(x) \dd x.
\end{align*}
The first term is zero because $\int_{\R^d} p_t(x) \dd x = 1$ for all $t$. Because $(p_t)_t$ follows the transport equation $\partial_t p_t = - \nabla \cdot (u_t p_t)$, the second term rewrites as
\begin{align*}
- \int_{\R^d} \nabla \cdot \bigl(u_t(x) p_t(x)\bigr) \log\left(\frac{p_t(x)}{q_t(x)}\right) \dd x 
= \int_{\R^d} \left\langle u_t(x) p_t(x), \nabla \log \left(\frac{p_t(x)}{q_t(x)}\right) \right\rangle \dd x,
\end{align*}
using integration by parts. Similarly, substituting the transport equation for $q_t$, $\partial_t q_t = - \nabla \cdot (\hat{u}_t q_t)$, the third term becomes
\begin{align*}
\int_{\R^d} \frac{p_t(x)}{q_t(x)} \nabla \cdot \bigl(\hat{u}_t(x) q_t(x)\bigr) \dd x 
&= - \int_{\R^d} \left\langle \nabla \left(\frac{p_t(x)}{q_t(x)}\right), \hat{u}_t(x) q_t(x) \right\rangle \dd x \\
&= - \int_{\R^d} \left\langle \frac{p_t(x)}{q_t(x)} \nabla \log \left(\frac{p_t(x)}{q_t(x)}\right), \hat{u}_t(x) q_t(x) \right\rangle \dd x \\
&= - \int_{\R^d} \left\langle \hat{u}_t(x) p_t(x), \nabla \log \left(\frac{p_t(x)}{q_t(x)}\right) \right\rangle \dd x,
\end{align*}
where we used the identity $\nabla(p_t/q_t) = (p_t/q_t)\nabla \log(p_t/q_t)$. Summing these terms completes the proof.
\end{proof}

\begin{corollary}
\label{coro:integrated-kl-of-transport}
In the context of Proposition~\ref{prop:kl-of-transport}, we have
\begin{align*}
\mathrm{KL}(p_1 \mid q_1)
&= \mathrm{KL}(p_0 \mid q_0) + \int_0^1 \int_{\R^d} \left\langle u_t(x)-\hat{u}_t(x), \nabla \log\left(\frac{p_t(x)}{q_t(x)}\right) \right\rangle p_t(x) \dd x \dd t.
\end{align*}
\end{corollary}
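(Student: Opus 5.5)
The plan is to integrate the differential identity of Proposition~\ref{prop:kl-of-transport} over the time interval $[0,1]$ and apply the fundamental theorem of calculus to the map $t \mapsto \mathrm{KL}(p_t \mid q_t)$.

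First, set $\kappa(t) := \mathrm{KL}(p_t \mid q_t)$ for $t \in [0,1]$. Under the standing smoothness assumptions of Proposition~\ref{prop:kl-of-transport}, $\kappa$ is differentiable on $[0,1]$. Its derivative is
\begin{align*}
\kappa'(t) = \int_{\R^d} \left\langle u_t(x)-\hat{u}_t(x), \nabla \log\left(\frac{p_t(x)}{q_t(x)}\right) \right\rangle p_t(x) \dd x .
\end{align*}

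Second, check that $\kappa'$ is integrable on $[0,1]$, so that the fundamental theorem of calculus applies. It is enough that $t \mapsto \kappa'(t)$ is continuous, which follows from the smooth dependence of $p_t$, $q_t$, $u_t$, $\hat{u}_t$ on $t$. This is the same regularity the notes already assume when exchanging $\frac{\dd}{\dd t}$ and $\int_{\R^d}$ in the proof of Proposition~\ref{prop:kl-of-transport}.

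Third, write $\kappa(1) - \kappa(0) = \int_0^1 \kappa'(t) \dd t$. Substituting the expression for $\kappa'(t)$ and rearranging gives exactly the stated identity, with $\mathrm{KL}(p_0 \mid q_0)$ on the right-hand side.

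The only delicate point is analytic: justifying the time differentiability of $\kappa$ and the integrability of $\kappa'$. In keeping with the rest of the chapter, I would take these as part of the smoothness hypotheses rather than prove them. If a more careful argument were wanted, I would first try a dominated-convergence bound on $\partial_t(p_t \log(p_t/q_t))$ that is uniform in $t \in [0,1]$.
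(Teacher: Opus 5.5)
Your proposal is correct and matches the notes: the corollary is stated there without proof, as the time-integrated form of Proposition~\ref{prop:kl-of-transport}, which is exactly the fundamental theorem of calculus applied to $t \mapsto \mathrm{KL}(p_t \mid q_t)$ on $[0,1]$. Treating the differentiability of this map and the integrability of its derivative as part of the standing smoothness assumptions is consistent with how the notes handle the proposition itself.
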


\subsection{Application to Denoising Diffusion Probabilistic Models (DDPM)}
\label{sec:flow-matching-stability}

\textbf{\textit{(Forward SDE)}}
Let $p_0 : \R^d \to \R_+$ be some probability distribution of interest. Starting from $Y_0 \sim p_0(y) \dd y$, we run the forward data-destroying SDE $\dd Y_\tau = f_\tau(Y_\tau) \dd \tau + \sqrt{2} \sigma_\tau(Y_\tau) \dd B_\tau$ over the time interval $[0,T]$. We denote by $(p_\tau)_{0 \leq \tau \leq T}$ the associated distribution.

\noindent
\textbf{\textit{(True backward (S)DE)}}
Given some backwards noise schedule $(b_\tau)_{0 \leq \tau \leq T}$ (possibly zero), Theorem~\ref{thm:backward-SDE} asserts that the solution to
\begin{align*}
\begin{cases}
\dd \overleftarrow{Y}_\tau = \overleftarrow{f}_\tau(\overleftarrow{Y}_\tau) \dd \tau + \sqrt{2} b_\tau(\overleftarrow{Y}_\tau) \dd B_\tau \\
\overleftarrow{Y}_0 \sim p_T(y) \dd y
\end{cases}
\end{align*}
with $\overleftarrow{f}_\tau := -f_{T-\tau} + \nabla (\sigma_{T-\tau}^2 + b_\tau^2) + (\sigma_{T-\tau}^2 + b_\tau^2) \nabla \log p_{T-\tau}$ satisfies $\overleftarrow{Y}_\tau \sim Y_{T-\tau}$, which we denote by $\overleftarrow{p}_\tau = p_{T-\tau}$.

\noindent
\textbf{\textit{(Approximated backward (S)DE)}}
Given some score function $s : \R^d \times [0,T] \to \R^d$ meant to approximate $\nabla \log p_{T-\tau}$, and an easy-to-sample user-defined density $p_\infty$, we now run the generative SDE:
\begin{align}
\label{eq:final-generator}
\begin{cases}
\dd \hat{Y}_\tau = \hat{f}_\tau(\hat{Y}_\tau) \dd \tau + \sqrt{2} b_\tau(\hat{Y}_\tau) \dd B_\tau \\
\hat{Y}_0 \sim p_\infty(y) \dd y
\end{cases}
\end{align}
with $\hat{f}_\tau := -f_{T-\tau} + \nabla (\sigma_{T-\tau}^2 + b_\tau^2) + (\sigma_{T-\tau}^2 + b_\tau^2) \hat{s}_{T-\tau}$, where $\hat{s}$ is the learn score function that we plug-in from the score matching (or any learning method) step. 
We write $\hat{Y}_\tau \sim \hat{p}_\tau(y) \dd y$. After running the SDE until time $T$, the generated sample output by the method is $\hat{Y}_T$.

\begin{proposition}
\label{prop:kl-bound-for-diffusion-models}
The final time of the stochastic process \eqref{eq:final-generator} satisfies
\begin{align*}
\mathrm{KL}(p_0 \mid \hat{p}_T)
&\leq \mathrm{KL}(p_T \mid p_\infty) + \int_0^T \int_{\R^d} \frac{(\sigma_{T-\tau}^2(y) +  b_{\tau}^2(y))^2}{4 b_{\tau}^2(y)} \left\| \nabla \log p_{T-\tau}(y) - \hat{s}_{T-\tau}(y) \right\|^2 p_{T-\tau}(y) \dd y \dd \tau.
\end{align*}
\end{proposition}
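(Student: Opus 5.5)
We plan to apply Corollary~\ref{coro:integrated-kl-of-transport} to the two flows $\overleftarrow{p}_\tau = p_{T-\tau}$ and $\hat{p}_\tau$ on $[0,T]$, after rescaling time to $[0,1]$ or simply rerunning the corollary on $[0,T]$. At $\tau=0$ the KL term is $\mathrm{KL}(p_T \mid p_\infty)$, and at $\tau=T$ it is $\mathrm{KL}(p_0 \mid \hat{p}_T)$, which are exactly the two endpoint quantities in the statement. To use the corollary, we first rewrite both Fokker--Planck equations as transport equations via Proposition~\ref{prop:fokker-planck-transport}. The true backward process has drift $\overleftarrow{f}_\tau$ and diffusion $b_\tau$, so its velocity is
\begin{align*}
\overleftarrow{u}_\tau = \overleftarrow{f}_\tau - b_\tau^2 \nabla \log \overleftarrow{p}_\tau - \nabla b_\tau^2 .
\end{align*}
The generative process has velocity
\begin{align*}
\hat{u}_\tau = \hat{f}_\tau - b_\tau^2 \nabla \log \hat{p}_\tau - \nabla b_\tau^2 .
\end{align*}
Both processes share the same $b_\tau$, so the $\nabla b_\tau^2$ terms and the $f$ and $\nabla\sigma^2$ parts of the drifts cancel in the difference. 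Writing $\lambda_\tau := \sigma_{T-\tau}^2 + b_\tau^2$ and $\delta_\tau := \nabla \log p_{T-\tau} - \hat{s}_{T-\tau}$, this gives
\begin{align*}
\overleftarrow{u}_\tau - \hat{u}_\tau = \lambda_\tau\, \delta_\tau - b_\tau^2 \nabla \log\bigl(\overleftarrow{p}_\tau / \hat{p}_\tau\bigr).
\end{align*}

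Set $g_\tau := \nabla \log(\overleftarrow{p}_\tau/\hat{p}_\tau)$. The corollary then yields
\begin{align*}
\mathrm{KL}(p_0\mid\hat p_T) = \mathrm{KL}(p_T\mid p_\infty) + \int_0^T\!\!\int_{\R^d} \Bigl( \lambda_\tau \langle \delta_\tau, g_\tau\rangle - b_\tau^2 \|g_\tau\|^2 \Bigr)\, p_{T-\tau}\,\dd y\,\dd\tau .
\end{align*}
The key step is to apply pointwise Young's inequality, $\lambda \langle \delta, g\rangle \le b^2\|g\|^2 + \frac{\lambda^2}{4b^2}\|\delta\|^2$. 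This exactly absorbs the negative Fisher-type term $-b_\tau^2\|g_\tau\|^2$ and leaves the integrand $\frac{(\sigma_{T-\tau}^2+b_\tau^2)^2}{4b_\tau^2}\|\nabla\log p_{T-\tau}-\hat s_{T-\tau}\|^2 p_{T-\tau}$. Integrating gives the claimed inequality.

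The main obstacle is bookkeeping rather than any deep estimate. First, we must check that the $\nabla b_\tau^2$ and $\nabla\sigma^2$ contributions indeed cancel, which requires the spatially varying $b$ to appear identically in both processes. Second, we must justify applying the corollary on $[0,T]$. The integrations by parts and differentiation under the integral are handled at the same formal level of regularity as in Proposition~\ref{prop:kl-of-transport}. The bound is meaningful only where $b_\tau>0$; if $b_\tau=0$ somewhere, the right-hand side is $+\infty$ there and the inequality is trivial.
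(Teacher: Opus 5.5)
Your proposal is correct and follows the paper's proof essentially step for step. Like the paper, you rewrite both Fokker--Planck equations as transport equations, apply Corollary~\ref{coro:integrated-kl-of-transport} on $[0,T]$, decompose the velocity difference as $(\sigma_{T-\tau}^2+b_\tau^2)(\nabla\log p_{T-\tau}-\hat s_{T-\tau}) - b_\tau^2\nabla\log(p_{T-\tau}/\hat p_\tau)$, and absorb the cross term by Young's inequality with weight $b_\tau^2$.
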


\begin{proof}
From Proposition~\ref{prop:fokker-planck-transport}, its family of densities $\overleftarrow{p}_\tau = p_{T-\tau}$ satisfy the transport equation 
\begin{align*}
\partial_\tau \overleftarrow{p}_\tau &= - \nabla \cdot \bigl(u_\tau \overleftarrow{p}_\tau\bigr) \\
\text{with velocity field~~~}
u_\tau &:= \overleftarrow{f}_\tau - b_\tau^2 \nabla  \log \overleftarrow{p}_\tau - \nabla b_\tau^2 \\
&= - f_{T-\tau} + \sigma_{T-\tau}^2 \nabla \log p_{T-\tau} + \nabla \sigma_{T-\tau}^2.
\end{align*}
Similarly, the family of densities $(\hat{p}_\tau)_{0 \leq \tau \leq T}$ satisfy the transport equation 
\begin{align*}
\partial_\tau \hat{p}_\tau &= - \nabla \cdot \bigl(\hat{u}_\tau \hat{p}_\tau\bigr) \\
\text{with velocity field~~~}
\hat{u}_\tau &:= \hat{f}_\tau - b_\tau^2 \nabla  \log \hat{p}_\tau - \nabla b_\tau^2 \\
&= - f_{T-\tau} + \sigma_{T-\tau}^2 \hat{s}_{T-\tau} - b_\tau^2 \bigl( \nabla  \log \hat{p}_\tau - \hat{s}_{T-\tau} \bigr) + \nabla \sigma_{T-\tau}^2.
\end{align*}
Hence, applying Corollary~\ref{coro:integrated-kl-of-transport} to $(\overleftarrow{p}_\tau)_\tau = (p_{T-\tau})_\tau$ and $(\hat{p}_\tau)_\tau$ on the time interval $[0,T]$ yields 
\begin{align*}
&\mathrm{KL}(p_0 \mid \hat{p}_T) - \mathrm{KL}(p_T \mid p_\infty) \\
&= \int_0^T \int_{\R^d} \left\langle \sigma_{T-\tau}^2 ( \nabla \log p_{T-\tau} - \hat{s}_{T-\tau} ) + b_\tau^2 ( \nabla  \log \hat{p}_\tau - \hat{s}_{T-\tau} ) , \nabla \log\left(\frac{p_{T-\tau}}{\hat{p}_\tau}\right) \right\rangle p_{T-\tau} \dd y \dd \tau \\
&= \int_0^T \int_{\R^d} \left\langle (\sigma_{T-\tau}^2 + b_\tau^2) ( \nabla \log p_{T-\tau} - \hat{s}_{T-\tau} ) - b_\tau^2 \nabla \log\left(\frac{p_{T-\tau}}{\hat{p}_\tau}\right) , \nabla \log\left(\frac{p_{T-\tau}}{\hat{p}_\tau}\right) \right\rangle p_{T-\tau} \dd y \dd \tau.
\end{align*}
In the integrand, the inner product simplifies to
\begin{align*}
- b_\tau^2 \left\| \nabla \log\left(\frac{p_{T-\tau}}{\hat{p}_\tau}\right) \right\|^2 + \left\langle (\sigma_{T-\tau}^2 + b_\tau^2) \bigl( \nabla \log p_{T-\tau} - \hat{s}_{T-\tau} \bigr) , \nabla \log\left(\frac{p_{T-\tau}}{\hat{p}_\tau}\right) \right\rangle \\
&\hspace{3em}
\leq \frac{(\sigma_{T-\tau}^2 +  b_\tau^2)^2}{4 b_\tau^2} \left\| \nabla \log p_{T-\tau} - \hat{s}_{T-\tau} \right\|^2,
\end{align*}
where the inequality follows from Young's inequality $ \left\langle A , B \right\rangle \leq \|A\|^2/(4\lambda) + \lambda \|B\|^2$ with $A = (\sigma_{T-\tau}^2 + b_\tau^2) (\nabla \log p_{T-\tau} - \hat{s}_{T-\tau})$, $B = \nabla \log\left(p_{T-\tau}/\hat{p}_\tau\right)$ and $\lambda = b_\tau^2$. 
The final result follows.
\end{proof}

\bibliographystyle{alpha}
\bibliography{biblio}
\chapter{Generator matching}\label{chap:generatormatching}
\minitoc

Generator Matching provides a common language for a large family of recent generative models.
Rather than trying to learn a distribution $p^\star$ at once, we choose a time-dependent family of intermediate laws $(p_t)_{0\leq t\leq 1}$ linking a simple reference law $p_0$ to $p^\star$.
We then look for a continuous-time Markov process whose marginal law at time $t$ is exactly $p_t$.
Once such a process has been learnt, sampling becomes straightforward: draw $X_0 \sim p_0$, simulate the dynamic process up to time $1$, and output $X_1$.

What makes the framework both general and amenable to training is that we do not try to learn the whole transition kernel of the process.
Instead, we work with its infinitesimal generator.
This viewpoint, recently formalized as \emph{Generator Matching}~\cite{holderrieth2024generator}, simultaneously subsumes flow matching, diffusion models, continuous-time discrete diffusion, and a variety of jump models.
It also enlarges the design space in a concrete way: one may change the probability path, the class of generators, the training loss, or even combine several Markov models together.

As introduced in Chapter~\ref{chap:score-based-generative-models}, we adopt the unified \emph{forward} causal convention on $t \in [0,1]$:
\[
0 \longrightarrow 1,
\qquad
p_0 = \mathcal{N}(0, \mathrm{Id}),
\qquad
p_1 = p^\star,
\]
which natively integrates with the forward generative SDEs and ODEs of the previous chapter. This orientation is convenient for generator matching because it keeps the generative process and the probability path running natively in the same direction.

\section{Probability paths and infinitesimal generators}

\subsection{Probability paths as design objects}

Let $S$ be a measurable state space.
In applications, one may keep in mind $S=\mathbb{R}^d$ for images or Euclidean data, a finite set for discrete data such as text, a manifold for geometric objects, or a product space for multimodal generation.
We are given i.i.d.\ observations from an unknown law $p^\star$ on $S$, and our goal is to generate new samples from that same law.

The first modeling decision is to choose a tractable base distribution $p_0$ from which one can sample easily, together with a \emph{probability path} $(p_t)_{0\leq t\leq 1}$ interpolating between $p_0$ and the target $p^\star$.
As seen in Section~\ref{sec:common-setup}, one usually specifies this path indirectly through a family of \emph{conditional} paths.
One chooses, for each fixed target data point $z^\circ\in S$, a path $\bigl(p_t(\cdot\mid Z^\circ=z^\circ)\bigr)_{0\leq t\leq 1}$ such that $p_0(\cdot\mid Z^\circ=z^\circ)=p_0$ and $p_1(\cdot\mid Z^\circ=z^\circ)=\delta_{z^\circ}$, and one then defines the marginal path by mixing against the latent target $Z^\circ \sim p^\star$:
\[
p_t(A)
=
\int_S p_t(A\mid Z^\circ=z^\circ)\,p^\star(\dd z^\circ),
\qquad A\subseteq S.
\]
Equivalently, if $Z^\circ \sim p^\star$ and, conditionally on $Z^\circ$, we define a static random variable $X^\circ_t\sim p_t(\cdot\mid Z^\circ)$, then the exact marginal law of the kinematic path $X^\circ_t$ is precisely $p_t$.

This conditional viewpoint is central for two reasons.
First, it gives an explicit recipe to manufacture the marginal path.
Second, and more importantly for learning, it is often much easier to compute objects attached to the conditional path than to the marginal one.
Generator Matching exploits this asymmetry systematically.

Let us record two examples that will return throughout the chapter.

\begin{definition}[Two basic conditional probability paths]
\label{def:basic_probability_paths}
Let $Z^\circ \sim p^\star$ be the latent target.

\begin{itemize}[leftmargin=*]
\item
If $S=\mathbb{R}^d$ and we sample an independent base noise $\xi^\circ \sim p_0$, the \emph{geometric-average path} is defined by the simple interpolation
\[
X^\circ_t=(1-\alpha_t)\xi^\circ+\alpha_t Z^\circ,
\qquad 0\leq t\leq 1,
\]
where $(\alpha_t)_{0\leq t\leq 1}$ is a smooth increasing schedule with $\alpha_0=0$ and $\alpha_1=1$.
Its conditional law is
\[
p_t(\dd x\mid Z^\circ=z^\circ)
=
\int_{\mathbb{R}^d}
\delta_{(1-\alpha_t)x_0+\alpha_t z^\circ}(\dd x)\,p_0(\dd x_0).
\]
When $p_0=\mathcal{N}(0,\mathrm{Id})$ and $\alpha_t=t$, this becomes the standard flow matching path:
\[
p_t(\cdot\mid Z^\circ=z^\circ)=\mathcal{N}(t z^\circ,(1-t)^2 \mathrm{Id}).
\]

\item
On an arbitrary state space, the \emph{mixture path} is defined by
\[
p_t(\dd x\mid Z^\circ=z^\circ)
=
(1-\kappa_t)p_0(\dd x)+\kappa_t\,\delta_{z^\circ}(\dd x),
\qquad 0\leq t\leq 1,
\]
where $(\kappa_t)_{0\leq t\leq 1}$ is smooth, non-decreasing, and satisfies $\kappa_0=0$, $\kappa_1=1$.
\end{itemize}
\end{definition}


The geometric-average path moves mass continuously in space.
The mixture path does not: it gradually removes mass from the reference law and reassigns it directly at the endpoint $Z^\circ$.

\subsection{Continuous-time Markov processes and their transition kernels}

Once the static path $(p_t)_t$ has been fixed, the next question is: \emph{which continuous-time dynamics should generate it?}
The relevant objects are continuous-time dynamic Markov processes $(X_t)_{0\leq t\leq 1}$.

\begin{definition}[Continuous-time Markov process]
A stochastic process $(X_t)_{0\leq t\leq 1}$ with values in $S$ is called a \emph{Markov process} if for every measurable set $A\subseteq S$ and every times
\[
0\leq t_1<\cdots<t_n<t_{n+1}\leq 1,
\]
one has
\[
\mathbb{P}\bigl(X_{t_{n+1}}\in A\mid X_{t_1},\ldots,X_{t_n}\bigr)
=
\mathbb{P}\bigl(X_{t_{n+1}}\in A\mid X_{t_n}\bigr).
\]
\end{definition}

Informally, knowing the present makes the past irrelevant for predicting the future.
Associated with such a process is its family of transition kernels
\[
k_{s,t}(x,\dd y),
\qquad 0\leq s\leq t\leq 1,
\]
defined by
\[
\mathbb{P}(X_t\in A\mid X_s=x)=k_{s,t}(x,A).
\]
We shall often write $k_{t+h\mid t}(\dd y\mid x)$ instead of $k_{t,t+h}(x,\dd y)$.

The Markov property can be encoded analytically by the action of these kernels on test functions.
Fix a class $\mathcal{T}$ of bounded measurable functions $f:S\to\mathbb{R}$ rich enough to characterize probability measures.
For $f\in\mathcal{T}$, define
\[
\langle p_t,f\rangle := \int_S f(x)\,p_t(\dd x),
\qquad
k_{t+h\mid t}f(x):=\int_S f(y)\,k_{t+h\mid t}(\dd y\mid x).
\]
Then the tower property yields the basic identity
\[
\langle p_t,k_{t+h\mid t}f\rangle
=
\langle p_{t+h},f\rangle.
\]
This is the weak formulation of marginal propagation.

\subsection{Generator and Kolmogorov forward equation}

The full transition kernel $k_{t+h\mid t}$ is generally far too complicated to parameterize directly.
What is tractable is its first-order behavior for small increments of time.
This leads to the generator.

\begin{definition}[Infinitesimal generator]
Let $(X_t)_{0\leq t\leq 1}$ be a Markov process with transition kernels $(k_{t+h\mid t})$.
For a test function $f\in\mathcal{T}$, assume that the limit
\[
L_t f(x)
:=
\lim_{h\downarrow 0}
\frac{k_{t+h\mid t}f(x)-f(x)}{h}
\]
exists uniformly in $x$.
The operator $L_t$ is called the \emph{generator} of the process at time $t$.
\end{definition}

Thus $L_t$ plays, for the semigroup of transition kernels, the role played by a derivative for a one-parameter family of functions.
It measures the instantaneous evolution of expectations along the process.

\begin{proposition}[Kolmogorov forward equation, weak form]
\label{prop:KFE_weak}
Let $(X_t)_{0\leq t\leq 1}$ be a Markov process with generator $(L_t)_{0\leq t\leq 1}$.
Assume that $t\mapsto \langle p_t,f\rangle$ is differentiable for every $f\in\mathcal{T}$.
Then, for all $f\in\mathcal{T}$,
\[
\frac{\dd}{\dd t}\langle p_t,f\rangle
=
\langle p_t,L_t f\rangle.
\]
\end{proposition}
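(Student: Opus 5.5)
The plan is to compute the derivative of $t\mapsto\langle p_t,f\rangle$ as a limit of difference quotients and to rewrite each quotient with the marginal propagation identity $\langle p_t,k_{t+h\mid t}f\rangle=\langle p_{t+h},f\rangle$ obtained above from the tower property. Since $\langle p_t,f\rangle$ is assumed differentiable, it suffices to identify its right derivative. That derivative equals the two-sided one, so working only with increments $h\downarrow 0$, which is how the generator is defined, loses nothing.

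\textbf{Step 1 (rewriting the difference quotient).} For $h>0$ with $t+h\leq 1$, use the propagation identity and linearity of $\langle p_t,\cdot\rangle$ to write
\begin{align*}
\frac{\langle p_{t+h},f\rangle-\langle p_t,f\rangle}{h}
=\Bigl\langle p_t,\frac{k_{t+h\mid t}f-f}{h}\Bigr\rangle .
\end{align*}
This step is purely algebraic. The one point to check is that $k_{t+h\mid t}f$ is bounded and measurable, so the pairing is well defined. This holds because $f\in\mathcal{T}$ is bounded and $k_{t+h\mid t}(\cdot\mid x)$ is a probability measure, which gives $\|k_{t+h\mid t}f\|_\infty\leq\|f\|_\infty$.

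\textbf{Step 2 (passing to the limit).} This is the only genuine step: the limit $h\downarrow 0$ has to be exchanged with integration against $p_t$. The definition of $L_t$ requires the convergence of $(k_{t+h\mid t}f-f)/h$ to $L_tf$ to be uniform in $x$. Since $p_t$ is a probability measure, this gives
\begin{align*}
\Bigl|\Bigl\langle p_t,\frac{k_{t+h\mid t}f-f}{h}\Bigr\rangle-\langle p_t,L_tf\rangle\Bigr|
\leq\sup_{x\in S}\Bigl|\frac{k_{t+h\mid t}f(x)-f(x)}{h}-L_tf(x)\Bigr|\xrightarrow[h\downarrow 0]{}0 .
\end{align*}
It also shows that $L_tf$ is measurable and bounded, as a uniform limit of bounded measurable functions, at least for $h$ small. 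So $\langle p_t,L_tf\rangle$ makes sense. If only pointwise convergence were available, I would fall back on dominated convergence, which needs a uniform bound on the quotients. Uniform convergence makes that unnecessary.

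\textbf{Step 3 (conclusion).} Steps 1 and 2 show that the right derivative of $t\mapsto\langle p_t,f\rangle$ equals $\langle p_t,L_tf\rangle$ for $t<1$. By the assumed differentiability, this is the derivative itself. At the endpoint $t=1$, the statement should be read as a one-sided derivative, in the same sense in which $L_1$ is defined. This proves the weak Kolmogorov forward equation for every $f\in\mathcal{T}$.
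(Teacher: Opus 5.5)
Your proof is correct and follows the paper's argument: you rewrite the difference quotient as $\langle p_t,(k_{t+h\mid t}f-f)/h\rangle$ using the propagation identity, then pass to the limit inside the integral using the uniform convergence built into the definition of $L_t$. Your extra remarks (boundedness of $k_{t+h\mid t}f$, reducing to the right derivative, and the endpoint $t=1$) are sound and fill in details the paper leaves implicit.
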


\begin{proof}
Fix $f\in\mathcal{T}$.
Using the identity $\langle p_t,k_{t+h\mid t}f\rangle=\langle p_{t+h},f\rangle$, we compute
\begin{align*}
\frac{\langle p_{t+h},f\rangle-\langle p_t,f\rangle}{h}
&=
\frac{\langle p_t,k_{t+h\mid t}f-f\rangle}{h}
\\
&=
\left\langle p_t,\frac{k_{t+h\mid t}f-f}{h}\right\rangle.
\end{align*}
By the uniform convergence in the definition of the generator, we may pass to the limit inside the integral and obtain
\[
\frac{\dd}{\dd t}\langle p_t,f\rangle
=
\langle p_t,L_t f\rangle.
\]
\end{proof}

This is the basic equation of the whole framework.
It says that, in order to generate a prescribed path $(p_t)_t$, it is enough to find a generator $L_t$ satisfying the above identity.
Whenever the measures $p_t$ admit densities with respect to a reference measure, one may rewrite the same evolution in the adjoint form $\partial_t p_t = L_t^* p_t$, which recovers the continuity equation for flows, the Fokker--Planck equation for diffusions, and the jump continuity equation for pure-jump processes.

\begin{remark}[The modeling strategy]
The logic of Generator Matching can now be summarized in two steps.
First, choose a kinematic probability path $(p_t)_t$ linking $p_0$ to $p^\star$.
Second, choose a class of Markov generators and solve the weak equation
\[
\frac{\dd}{\dd t}\langle p_t,f\rangle = \langle p_t,L_t f\rangle
\qquad \text{for all }f\in\mathcal{T}.
\]
The rest of the framework is devoted to turning this idea into a scalable learning problem.
\end{remark}

\section{Conditional generators and the matching loss}

\subsection{From conditional paths to the marginal generator}

We now come to the key structural identity behind the whole method.
Suppose that, for every endpoint $Z^\circ$, we have already found a generator $L_t^{Z^\circ}$ whose Markov process generates the conditional path.
Can we reconstruct from them a generator for the marginal path $(p_t)_t$?
The answer is yes, and the formula can be interpreted probabilistically.

\begin{proposition}[Conditional-to-marginal generator formula]
\label{prop:conditional_marginal_generator}
Let $(X^\circ_t)_{t\in[0,1]}$ be a static interpolating process and let
$Z^\circ\sim p^\star$. Assume that, conditionally on $Z^\circ$, the process
$(X^\circ_t)_{t\in[0,1]}$ is Markov with conditional infinitesimal generator
$L_t^{Z^\circ}$. Then, under suitable regularity assumptions, the one-time
marginals of $(X^\circ_t)_{t\in[0,1]}$ evolve according to the operator
\[
L_t f(x)
=
\mathbb{E}\bigl[L_t^{Z^\circ}f(x)\mid X^\circ_t=x\bigr].
\]
Equivalently, for every suitable test function $f$,
\[
\frac{\dd}{\dd t}\mathbb{E}\bigl[f(X^\circ_t)\bigr]
=
\mathbb{E}\bigl[L_t f(X^\circ_t)\bigr].
\]
\end{proposition}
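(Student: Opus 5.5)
The argument conditions on the latent endpoint, applies the Kolmogorov forward equation to each conditional path, averages over $Z^\circ$, and then uses the tower property to turn the conditional generator into a function of $X^\circ_t$ alone.

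\textbf{Step 1: conditional forward equation.} Fix a test function $f\in\mathcal{T}$ and write the marginal expectation as an average of conditional ones:
\[
\mathbb{E}\bigl[f(X^\circ_t)\bigr]=\int_S \mathbb{E}\bigl[f(X^\circ_t)\mid Z^\circ=z^\circ\bigr]\,p^\star(\dd z^\circ)=\int_S \langle p_t(\cdot\mid Z^\circ=z^\circ),f\rangle\,p^\star(\dd z^\circ).
\]
For each fixed $z^\circ$, the process is Markov with generator $L_t^{z^\circ}$. Proposition~\ref{prop:KFE_weak} then gives
\[
\frac{\dd}{\dd t}\langle p_t(\cdot\mid z^\circ),f\rangle=\langle p_t(\cdot\mid z^\circ),L_t^{z^\circ}f\rangle=\mathbb{E}\bigl[L_t^{Z^\circ}f(X^\circ_t)\mid Z^\circ=z^\circ\bigr].
\]

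\textbf{Step 2: average over $Z^\circ$.} Differentiate the mixture in $t$ and exchange $\frac{\dd}{\dd t}$ with the integral against $p^\star$. This yields $\frac{\dd}{\dd t}\mathbb{E}[f(X^\circ_t)]=\mathbb{E}\bigl[L_t^{Z^\circ}f(X^\circ_t)\bigr]$, an expectation over the joint law of $(Z^\circ,X^\circ_t)$.

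\textbf{Step 3: tower property.} Condition on $X^\circ_t$. The function $(z,x)\mapsto L_t^{z}f(x)$ is jointly measurable, so
\[
\mathbb{E}\bigl[L_t^{Z^\circ}f(X^\circ_t)\bigr]=\mathbb{E}\Bigl[\mathbb{E}\bigl[L_t^{Z^\circ}f(X^\circ_t)\mid X^\circ_t\bigr]\Bigr]=\mathbb{E}\bigl[L_tf(X^\circ_t)\bigr],
\]
where $L_tf(x):=\mathbb{E}[L_t^{Z^\circ}f(x)\mid X^\circ_t=x]$ is read as the regular conditional expectation evaluated on the diagonal $X^\circ_t=x$. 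This is exactly the same move as the test-function computation identifying $v_t(x)=\mathbb{E}[\dot I_t(Z^\circ)\mid X^\circ_t=x]$ in flow matching. It gives the second displayed identity of the statement, and hence that $(p_t)_t$ satisfies the weak Kolmogorov equation with operator $L_t$.

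\textbf{Main obstacle.} The delicate point is Step 2, the interchange of differentiation in $t$ with integration over $z^\circ$. This is where the ``suitable regularity assumptions'' enter. I would first try dominated convergence on the difference quotients $h^{-1}\bigl(\langle p_{t+h}(\cdot\mid z^\circ),f\rangle-\langle p_t(\cdot\mid z^\circ),f\rangle\bigr)$. By the proof of Proposition~\ref{prop:KFE_weak}, each quotient equals $\langle p_t(\cdot\mid z^\circ),h^{-1}(k^{z^\circ}_{t+h\mid t}f-f)\rangle$. So it suffices to assume that the convergence defining $L_t^{z^\circ}f$ is uniform in $x$ and in $z^\circ$, or at least that these quotients are bounded by a $p^\star$-integrable function of $z^\circ$.

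A secondary issue is that $L_t^{Z^\circ}f(X^\circ_t)$ must be integrable, so that the conditional expectation in Step 3 is well defined. Conditioning on $X^\circ_t$ only determines $L_t f$ $p_t$-almost everywhere, which is all the weak formulation needs.
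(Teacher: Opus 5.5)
Your proposal is correct and follows essentially the same route as the paper: write $\mathbb{E}[f(X^\circ_t)]$ as an average over $Z^\circ$ of conditional expectations, differentiate under the expectation using the conditional generator, and then condition on $X^\circ_t$ to identify $L_t f(x)=\mathbb{E}[L_t^{Z^\circ}f(x)\mid X^\circ_t=x]$. You also justify the derivative--integral interchange by dominated convergence on the difference quotients and flag the integrability needed in Step 3, which spells out what the paper leaves under ``suitable regularity assumptions''.
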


\begin{proof}
For a suitable test function $f$, the tower property gives
\[
\mathbb{E}\bigl[f(X^\circ_t)\bigr]
=
\mathbb{E}\Big[\mathbb{E}\bigl[f(X^\circ_t)\mid Z^\circ\bigr]\Big].
\]
Differentiating under the expectation and using the conditional generator,
\[
\frac{\dd}{\dd t}\mathbb{E}\bigl[f(X^\circ_t)\bigr]
=
\mathbb{E}\Big[
\frac{\dd}{\dd t}
\mathbb{E}\bigl[f(X^\circ_t)\mid Z^\circ\bigr]
\Big]
=
\mathbb{E}\Big[
\mathbb{E}\bigl[L_t^{Z^\circ}f(X^\circ_t)\mid Z^\circ\bigr]
\Big].
\]
Hence
\[
\frac{\dd}{\dd t}\mathbb{E}\bigl[f(X^\circ_t)\bigr]
=
\mathbb{E}\bigl[L_t^{Z^\circ}f(X^\circ_t)\bigr].
\]
Conditioning now on the current state $X^\circ_t$ yields
\[
\mathbb{E}\bigl[L_t^{Z^\circ}f(X^\circ_t)\bigr]
=
\mathbb{E}\Big[
\mathbb{E}\bigl[
L_t^{Z^\circ}f(X^\circ_t)\mid X^\circ_t
\bigr]
\Big].
\]
Thus the desired identity holds with
\[
L_t f(X^\circ_t)
=
\mathbb{E}\bigl[
L_t^{Z^\circ}f(X^\circ_t)\mid X^\circ_t
\bigr],
\]
or, in statewise form,
\[
L_t f(x)
=
\mathbb{E}\bigl[
L_t^{Z^\circ}f(x)\mid X^\circ_t=x
\bigr].
\]
\end{proof}

This proposition is the real reason why conditional paths are useful.
It reduces the construction of a marginal generator to a conditional problem, where the endpoint $Z^\circ$ is known.
In practice, conditional generators are often explicit, while the marginal generator is not.

\begin{figure}[htbp]
    \centering

    \begin{subfigure}{0.9\linewidth}
        \centering
        \includegraphics[width=.9\linewidth]{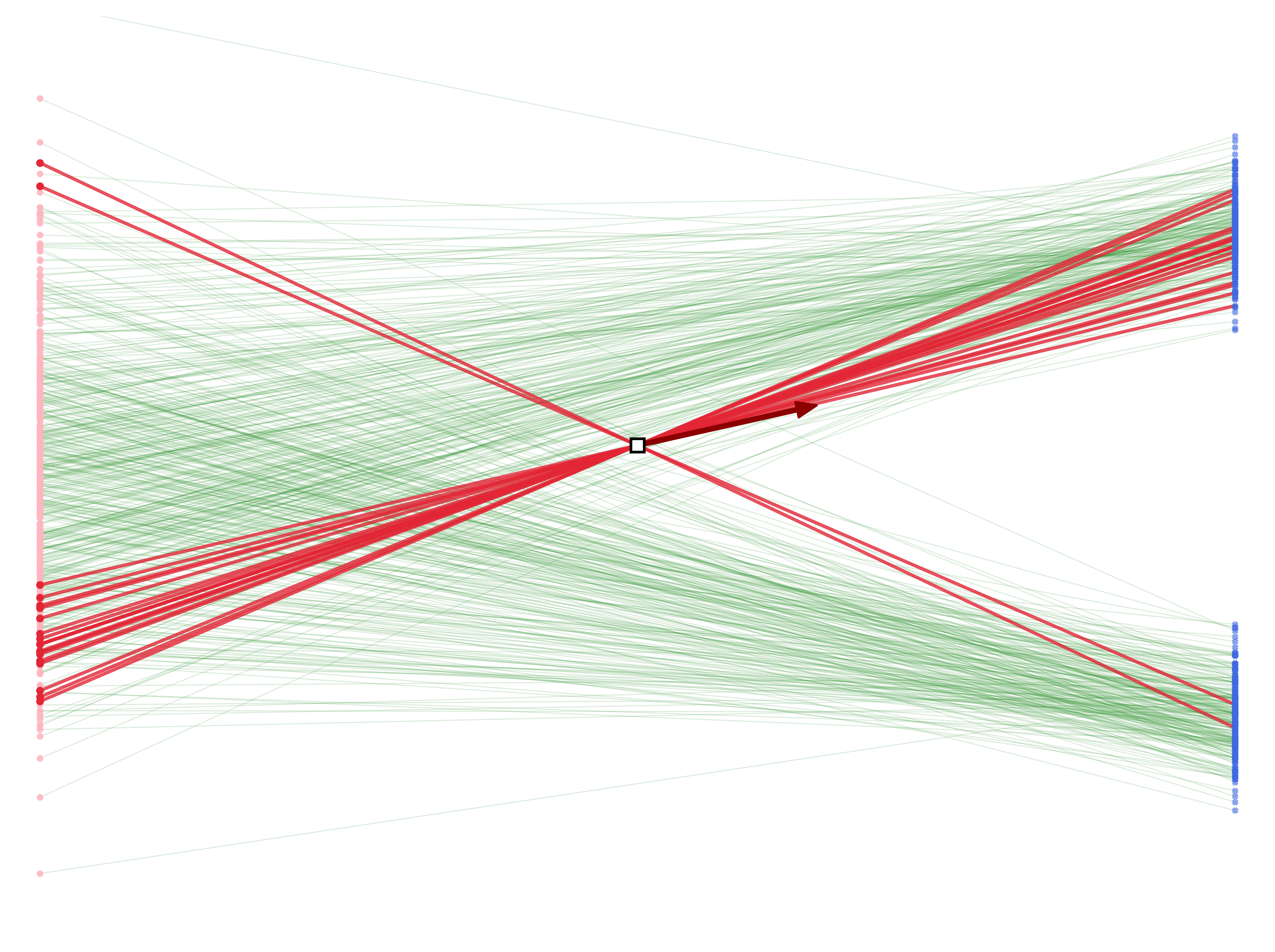}
        \caption{
        The marginal drift field $a_t(x)$, shown as a black arrow, arises as the conditional expectation of the conditional vector fields
        $a_t(x \mid z^\circ)$, shown as red lines, weighted by the posterior probability of each target $z^\circ$ given the current position $x$.
        Here,
        $X_t^\circ = (1-t)\xi^\circ + tX_1^\circ$.
        }
        \label{fig:conditional_vector_field_drift}
    \end{subfigure}

    \vspace{1em}

    \begin{subfigure}{0.9\linewidth}
        \centering
        \includegraphics[width=.9\linewidth]{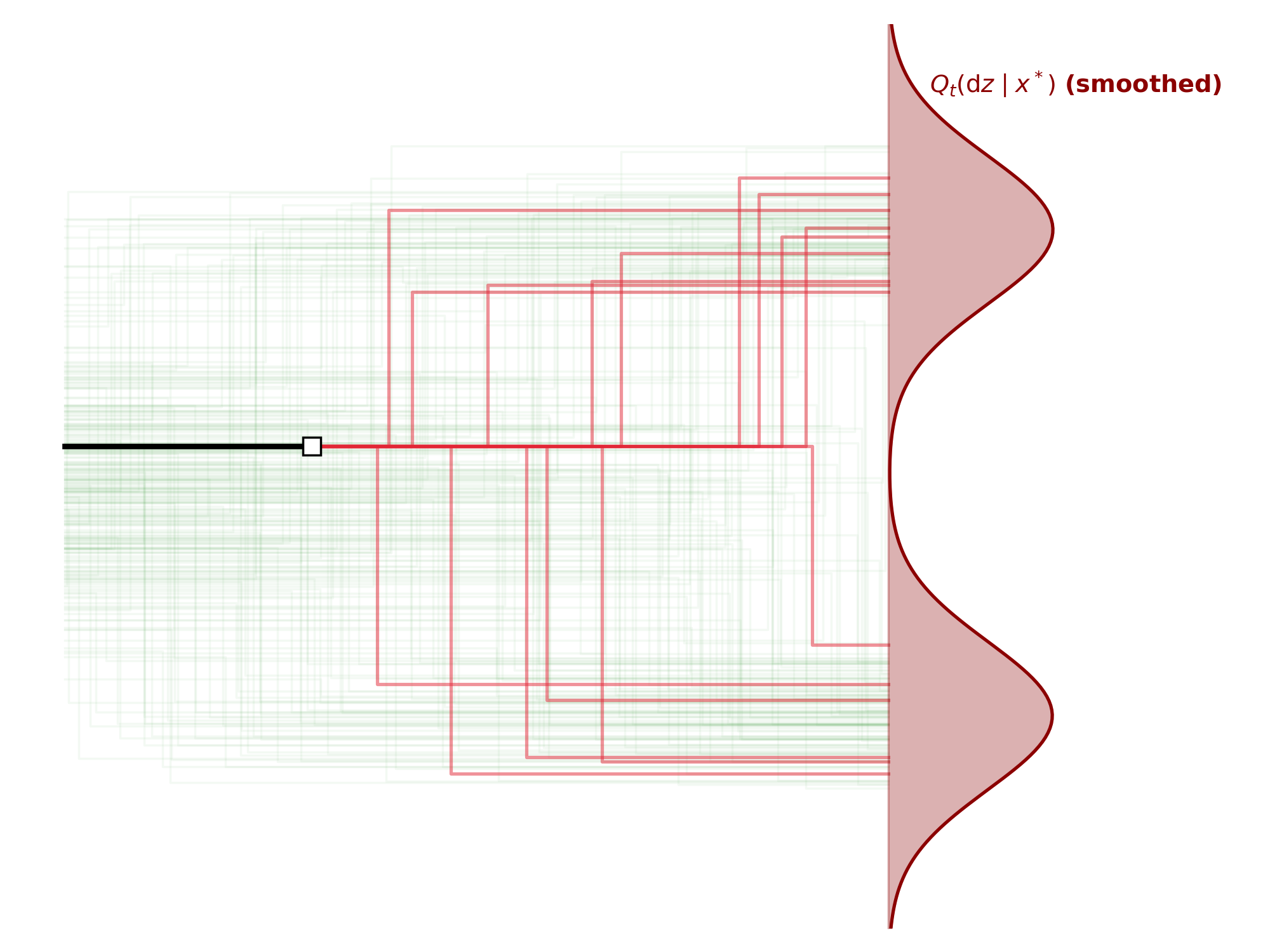}
        \caption{
        Analogous illustration for a pure jump process and the mixture conditional interpolant 
        $p_t(\cdot \mid z^\circ)
        =
        (1-t)\mathcal{U}([-1,1]) + t\delta_{z^\circ}.
        $
        }
        \label{fig:conditional_vector_field_jump}
    \end{subfigure}

    \caption{
    Conditional vector fields and their marginal averages for continuous and jump-process interpolants.
    }
    \label{fig:conditional_vector_field}
\end{figure}

\begin{remark}[Interactive visualizations]
For an excellent intuitive introduction to these concepts---especially why matching conditional paths simplifies the intractable marginal problem visually---we highly recommend the interactive blog post \emph{A Visual Dive into Conditional Flow Matching} \href{https://dl.heeere.com/conditional-flow-matching/}{(link)} by Anne Gagneux \emph{et al.}\ \cite{gagneux2024visual}. It provides visual deep-dives into why conditioning on the target endpoint makes the vector field formulation mathematically tractable and straightforward to simulate. See Figure~\ref{fig:conditional_vector_field} for an illustration of this averaging phenomenon.
\end{remark}

\subsection{Linear parameterizations of generators}

A generator is a linear operator acting on functions.
Neural networks, on the other hand, produce finite-dimensional outputs.
To bridge these two viewpoints, one chooses a linear parameterization of the generator.
The idea is to isolate the dependence on the test function $f$ in a known operator $K$, and to let the neural network predict only the coefficients.

\begin{definition}[Linear parameterization]
\label{def:linear_parametrization}
Let $V$ be a finite-dimensional inner-product space, $\Omega\subseteq V$ a convex set, and $K$ a linear map from test functions to $V$-valued functions on $S$.
A family of generators $(L_t)_t$ is said to admit a \emph{linear parameterization} if there exists a function
\[
F_t:S\to \Omega
\]
such that, for all test functions $f$,
\[
L_t f(x)=\langle Kf(x),F_t(x)\rangle_V.
\]
\end{definition}

This abstract form includes the standard examples.
If $S=\mathbb{R}^d$ and the process is a deterministic transport flow with drift field $a_t$, then
\[
L_t f(x)=\langle a_t(x), \nabla f(x)\rangle,
\]
so one may take $Kf=\nabla f$ and $F_t=a_t$.
For a drift-diffusion SDE process with drift $a_t$ and scalar diffusion parameter $\sqrt{2}b_t$, one has
\[
L_t f(x)=\langle a_t(x), \nabla f(x) \rangle + b_t^2 \Delta f(x),
\]
and one may choose
\[
Kf(x)=\bigl(\nabla f(x), \Delta f(x)\bigr),
\qquad
F_t(x)=\bigl(a_t(x), b_t^2\bigr).
\]
For a jump process,
\[
L_t f(x)=\int_S \bigl(f(y)-f(x)\bigr)Q_t(\dd y;x),
\]
so the predicted object is the jump kernel $Q_t(\cdot;x)$ itself.

\begin{remark}[What the network actually predicts]
In Generator Matching, the neural network does \emph{not} predict the next sample directly.
It predicts the coefficient field $F_t^\theta(x)$ entering the generator.
Sampling is then performed by numerically integrating the Markov dynamics associated with that learned generator.
This separation between \emph{learning} the generator and \emph{simulating} the process is conceptually important.
\end{remark}

\subsection{Bregman divergences and the conditional training objective}

Suppose now that we have fixed a linear parameterization and that $F_t(x)$ denotes the unknown parameter field of the marginal generator, while $F_t^{Z^\circ}(x)$ denotes the corresponding conditional parameter field.
A natural but intractable objective would be to fit $F_t^\theta$ directly to $F_t$ by minimizing
\[
\mathcal L_{\mathrm{GM}}(\theta)
:=
\mathbb E\Bigl[D_\Phi\bigl(F_T(X^\circ_T),F_T^\theta(X^\circ_T)\bigr)\Bigr],
\]
where $T \sim \mathcal{U}(0,1)$ is sampled from a uniform time distribution, and $D_\Phi$ is a Bregman divergence on the parameter space. Note that since the generative dynamic aims to match the marginals of the static path, we evaluate expectations directly over the readily available static samples $X^\circ_T$.
The problem, of course, is that $F_t$ is not known.

The conditional-to-marginal formula suggests the right replacement.
Indeed, the marginal target is the conditional expectation of the conditional target:
\[
F_t(x)=\mathbb E\bigl[F_t^{Z^\circ}(x)\mid X^\circ_t=x\bigr].
\]
This immediately brings to mind a regression problem.
The remarkable fact is that the correct class of losses is precisely the class of Bregman divergences.

\begin{definition}[Bregman divergence]
Let $\Phi: \Omega\to\mathbb R$ be a differentiable strictly convex function on a convex set $\Omega\subseteq V$.
The associated \emph{Bregman divergence} is
\[
D_\Phi(a,b)
:=
\Phi(a)-\Phi(b)-\langle \nabla \Phi(b),a-b\rangle_V.
\]
\end{definition}

The two examples to keep in mind are the squared Euclidean norm, obtained from $\Phi(a)=\|a\|_V^2$, and the Kullback--Leibler divergence on the probability simplex.

\begin{proposition}[Why the conditional loss works]
\label{prop:bregman_conditional_loss}
Let $D_\Phi$ be a Bregman divergence.
Assume that the marginal target is given by
\[
F_t(x)=\mathbb E\bigl[F_t^{Z^\circ}(x)\mid X^\circ_t=x\bigr].
\]
Then the intractable loss
\[
\mathcal L_{\mathrm{GM}}(\theta)
=
\mathbb E\Bigl[D_\Phi\bigl(F_T(X^\circ_T),F_T^\theta(X^\circ_T)\bigr)\Bigr]
\]
has the same gradient in $\theta$ as the tractable conditional loss
\[
\mathcal L_{\mathrm{CGM}}(\theta)
=
\mathbb E\Bigl[D_\Phi\bigl(F_T^{Z^\circ}(X^\circ_T),F_T^\theta(X^\circ_T)\bigr)\Bigr].
\]
In fact,
\[
\mathcal L_{\mathrm{GM}}(\theta)
=
\mathcal L_{\mathrm{CGM}}(\theta)+C,
\]
where $C$ does not depend on $\theta$.
\end{proposition}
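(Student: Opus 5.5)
The plan is to expand both Bregman divergences using their definition and compare the terms one at a time. I split each into a part that does not involve $\theta$, a part that involves only $F_T^\theta$, and a cross term that is linear in the target. Only the cross term differs between the two losses, and it is matched by the tower property.

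Write $D_\Phi(a,b)=\Phi(a)-\Phi(b)+\langle\nabla\Phi(b),b\rangle_V-\langle\nabla\Phi(b),a\rangle_V$. Apply this with $b=F_T^\theta(X^\circ_T)$ and with $a=F_T(X^\circ_T)$ in $\mathcal L_{\mathrm{GM}}$, respectively $a=F_T^{Z^\circ}(X^\circ_T)$ in $\mathcal L_{\mathrm{CGM}}$. Then:

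- The terms $-\Phi(b)+\langle\nabla\Phi(b),b\rangle_V$ are identical in both losses.
- The terms $\E[\Phi(F_T(X^\circ_T))]$ and $\E[\Phi(F_T^{Z^\circ}(X^\circ_T))]$ do not depend on $\theta$.
- The difference $\mathcal L_{\mathrm{GM}}-\mathcal L_{\mathrm{CGM}}$ therefore reduces to
\[
C=\E\bigl[\Phi(F_T(X^\circ_T))\bigr]-\E\bigl[\Phi(F_T^{Z^\circ}(X^\circ_T))\bigr]
\]
plus the difference of the two cross terms $-\E[\langle\nabla\Phi(F_T^\theta(X^\circ_T)),\,\cdot\,\rangle_V]$.

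The key step is to show the two cross terms agree. Condition on the pair $(T,X^\circ_T)$, with $T$ independent of everything else. The quantity $\nabla\Phi(F_T^\theta(X^\circ_T))$ is measurable with respect to $\sigma(T,X^\circ_T)$, and the inner product is linear in its second argument. So the tower property gives
\[
\E\bigl[\langle\nabla\Phi(F_T^\theta(X^\circ_T)),F_T^{Z^\circ}(X^\circ_T)\rangle_V\bigr]
=\E\bigl[\langle\nabla\Phi(F_T^\theta(X^\circ_T)),\E[F_T^{Z^\circ}(X^\circ_T)\mid T,X^\circ_T]\rangle_V\bigr].
\]
By hypothesis the inner conditional expectation equals $F_T(X^\circ_T)$. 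Conditionally on $T=t$, this is exactly the assumed identity $F_t(x)=\E[F_t^{Z^\circ}(x)\mid X^\circ_t=x]$. Hence $\mathcal L_{\mathrm{GM}}=\mathcal L_{\mathrm{CGM}}+C$, and differentiating in $\theta$ shows the two gradients coincide.

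The main obstacle is integrability rather than algebra. The constant $C$ and the cross terms must be finite for the expansion to be legitimate. That requires, for instance, $\Phi(F^{Z^\circ}_T(X^\circ_T))$ integrable and $\nabla\Phi(F^\theta_T)$ square integrable alongside $F^{Z^\circ}_T$.

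By Jensen's inequality for the convex function $\Phi$, $\E[\Phi(F_T(X^\circ_T))]\le\E[\Phi(F_T^{Z^\circ}(X^\circ_T))]$. So integrability of the conditional quantity controls the marginal one, and in particular $C\le 0$. Convexity of $\Omega$ ensures that $F_T(x)$, as an average of elements of $\Omega$, stays in $\Omega$, so $\Phi(F_T)$ is defined.

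For the gradient statement, I would either assume enough regularity to differentiate under the expectation, or simply note that two functions differing by a $\theta$-independent constant have equal gradients wherever either one is differentiable. The second route avoids the interchange entirely.
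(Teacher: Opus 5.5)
Your proposal is correct and follows essentially the paper's argument. The paper also writes $D_\Phi(a,b)$ as $\Phi(a)-\langle a,\nabla\Phi(b)\rangle_V+\bigl(\langle\nabla\Phi(b),b\rangle_V-\Phi(b)\bigr)$, affine in $a$, and applies the tower property to the cross term, so the losses differ by the $\theta$-free constant $\E[\Phi(F_T(X^\circ_T))-\Phi(F_T^{Z^\circ}(X^\circ_T))]$. Your additions (conditioning jointly on $(T,X^\circ_T)$, the integrability caveats, and the conditional Jensen observation that $C\le 0$) are valid refinements that the paper leaves implicit.
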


\begin{proof}
A Bregman divergence is affine in its first argument.
More precisely, one may rewrite it as
\[
D_\Phi(a,b)=A(a)+\langle a,B(b)\rangle_V + C(b),
\]
where
\[
A(a)=\Phi(a),
\qquad
B(b)=-\nabla\Phi(b),
\qquad
C(b)=-\Phi(b)+\langle \nabla\Phi(b),b\rangle_V.
\]
Hence
\begin{align*}
\mathcal L_{\mathrm{GM}}(\theta)
&=
\mathbb E\Bigl[A(F_T(X^\circ_T))
+\langle F_T(X^\circ_T),B(F_T^\theta(X^\circ_T))\rangle_V
+C(F_T^\theta(X^\circ_T))\Bigr].
\end{align*}
Using the identity $F_t(x)=\mathbb E[F_t^{Z^\circ}(x)\mid X^\circ_t=x]$, we may replace $F_T(X^\circ_T)$ by its conditional expectation in the middle term.
This gives
\begin{align*}
\mathcal L_{\mathrm{GM}}(\theta)
&=
\mathbb E\Bigl[A(F_T(X^\circ_T))\Bigr]
\\
&\qquad +
\mathbb E\Bigl[\langle F_T^{Z^\circ}(X^\circ_T),B(F_T^\theta(X^\circ_T))\rangle_V
+C(F_T^\theta(X^\circ_T))\Bigr].
\end{align*}
Comparing with the same affine decomposition for the conditional loss, we obtain
\[
\mathcal L_{\mathrm{CGM}}(\theta)
=
\mathbb E\Bigl[A(F_T^{Z^\circ}(X^\circ_T))\Bigr]
+
\mathbb E\Bigl[\langle F_T^{Z^\circ}(X^\circ_T),B(F_T^\theta(X^\circ_T))\rangle_V
+C(F_T^\theta(X^\circ_T))\Bigr].
\]
Subtracting the two expressions shows that they differ only by the constant
\[
\mathbb E\bigl[A(F_T(X^\circ_T))-A(F_T^{Z^\circ}(X^\circ_T))\bigr],
\]
which is independent of $\theta$.
The gradients are therefore identical.
\end{proof}

This proposition is the training principle of Generator Matching.
It turns the problem of learning an unknown marginal generator into a supervised learning problem on conditional targets that can be sampled explicitly.
The only ingredients needed in practice are:
\begin{enumerate}
\item a conditional probability path that can be sampled,
\item an explicit conditional generator for that path,
\item a parameterization of the desired generator class,
\item a Bregman divergence.
\end{enumerate}

\begin{remark}[Flow matching as a special case]
If one restricts to flow generators
\[
L_t f(x)=\langle a_t(x) , \nabla f(x) \rangle,
\]
and chooses the squared Euclidean loss, then the conditional Generator Matching objective becomes exactly the conditional flow matching loss.
In that sense, flow matching is not a competing framework: it is the pure-flow subclass of Generator Matching.
\end{remark}

\section{The design space of Markov generators}

\subsection{Zoology of infinitesimal generators}

One of the conceptual strengths of Generator Matching is that it makes the design space explicit.
On Euclidean spaces and on finite discrete spaces, the admissible generators are known rather precisely.
The following statement is the right synthesis for our purposes.

\begin{theorem}[Universal shape of generators]
\label{thm:universal_generators}
Under standard regularity assumptions, the generator of a continuous-time Markov process takes the following form.

\begin{itemize}[leftmargin=*]
\item
If $S$ is finite, then the process is a continuous-time Markov chain and
\[
L_t f(x)=\sum_{y\in S} q_t(y,x) f(y),
\]
where $Q_t=(q_t(y,x))_{x,y\in S}$ is a rate matrix satisfying
\[
q_t(y,x)\geq 0 \quad (y\neq x),
\qquad
\sum_{y\in S} q_t(y,x)=0.
\]

\item
If $S=\mathbb{R}^d$, then
\[
L_t f(x)
=
\langle a_t(x) , \nabla f(x) \rangle
+\frac12 \langle \sigma_t(x) \sigma_t(x)^\top , \nabla^2 f(x)\rangle_{\mathrm{HS}}
+\int_{\mathbb{R}^d}\bigl(f(y)-f(x)\bigr)Q_t(\dd y;x),
\]
where $a_t$ is a drift field, $\sigma_t(x)$ a diffusion matrix, and $Q_t(\cdot;x) = \lambda_{t,x} J_t(\cdot;x)$ is a finite jump measure.
Indeed,
\begin{itemize}[leftmargin=*]
\item If $\dd X_t = a_t(X_t)\dd t$ with $a_t : \R^d \to \R^d$, then a Taylor expansion yields $f(X_{t+h}) = f(X_t) + \langle a_t(X_t), \nabla f(X_t) \rangle + o(h)$. Therefore, after integration $L_tf(x) = \langle a_t(x), \nabla f(x) \rangle$;
\item If $\dd X_t = \sigma_t(X_t)\dd B_t$ with $\sigma_t : \R^d \to \R^{d \times k}$, then the Itô formula yields $f(X_{t+h}) = f(X_t) + \sqrt{h} \langle \sigma_t(X_t) \xi, \nabla f(X_t) \rangle + \frac{h}{2} \langle \sigma_t(X_t) \xi \xi^\top \sigma_t(X_t)^\top, \nabla^2 f(X_t) \rangle_{\mathrm{HS}} + o(h)$. 
Therefore, after integration, $L_tf(x) = \frac{1}{2} \langle \sigma_t(x)\sigma_t(x)^\top, \nabla^2 f(x) \rangle_{\mathrm{HS}}$;
\item If $X_t$ is a jump process with local intensity $\lambda_{t,x}$ and jump distribution $J_t(\mathrm{d}y;x)$, then $X_{t+h} = X_t$ with probability $1-\lambda_{t,X_t} h + o(h)$, and $X_{t+h} = Y \sim J_t(\dd y;X_t)$ with probability $\lambda_{t,X_t} h + o(h)$. Therefore, after integration, $L_t f(x) = +\int_{\mathbb{R}^d}\bigl(f(y)-f(x)\bigr)\lambda_{t,x} J_t(\dd y;x)$.
\end{itemize}
By construction, infinitesimal generators satisfy the conditional minimum principle, meaning that if $x_0 \in \argmin f$, then $L_tf(x_0) \geq 0$. As a result, one can show that every sufficiently regular continuous-time  Markov process on $\R^d$ decomposes into a flow part, a diffusion part, and a jump part (with a locally finite activity in this slightly restrictive case).
This is why generator matching naturally contains flow models, diffusion models, and other currently unexplored pure-jump or mixed models.
\end{itemize}
\end{theorem}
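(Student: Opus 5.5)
The plan is to treat the two cases separately. Throughout, the only ingredients are the definition of the generator as $\lim_{h\downarrow 0}(k_{t+h\mid t}f - f)/h$, Theorem~\ref{thm:ito-multidimension} for the diffusive part, and the positive maximum/minimum principle that every generator inherits from $k_{t+h\mid t}$ being a Markov kernel.

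\textbf{Finite case.} We take the test class $\mathcal{T}$ to be all functions $S\to\R$, which is a finite-dimensional space. Then $k_{t+h\mid t}$ is a stochastic matrix $P_{t,t+h}$ with entries $P_{t,t+h}(y,x)=\mathbb{P}(X_{t+h}=y\mid X_t=x)$. Existence of the generator on the indicators $f=\mathbbm{1}_{\{y\}}$ gives the rates
\[
q_t(y,x)=\lim_{h\downarrow 0}\frac{P_{t,t+h}(y,x)-\delta_{x,y}}{h}.
\]
By linearity, $L_tf(x)=\sum_y q_t(y,x)f(y)$. Nonnegativity of $P_{t,t+h}(y,x)$ for $y\neq x$ passes to the limit and gives $q_t(y,x)\ge 0$. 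Column sums equal to one give $\sum_y q_t(y,x)=\lim_{h\downarrow 0}(1-1)/h=0$. This case is essentially complete.

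\textbf{Euclidean case.} We argue in two steps.

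First, sufficiency: we check that each of the three elementary dynamics produces the corresponding term.
\begin{itemize}[leftmargin=*]
\item For the ODE $\dd X_t=a_t(X_t)\dd t$, a first-order Taylor expansion gives $f(X_{t+h})=f(x)+h\langle a_t(x),\nabla f(x)\rangle+o(h)$.
\item For the SDE $\dd X_t=\sigma_t(X_t)\dd B_t$, we apply Theorem~\ref{thm:ito-multidimension} to $f(X_s)$ on $[t,t+h]$. The stochastic integral has zero mean, so
\[
k_{t+h\mid t}f(x)-f(x)=\E\int_t^{t+h}\tfrac12\langle\sigma_s\sigma_s^\top(X_s),\nabla^2f(X_s)\rangle_{\mathrm{HS}}\dd s .
\]
Dividing by $h$ and using continuity of the integrand yields the Hessian term.
\item For a jump process with intensity $\lambda_{t,x}$ and jump law $J_t$, conditioning on the event "no jump" (probability $1-\lambda h+o(h)$) versus "exactly one jump" (probability $\lambda h+o(h)$), with two or more jumps being $o(h)$, yields $\int(f(y)-f(x))\lambda_{t,x}J_t(\dd y;x)$.
\end{itemize}
Independent superposition of the three mechanisms makes the generators add, since cross terms are $o(h)$.

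Second, necessity, i.e.\ that nothing else can occur. Here we invoke the conditional minimum principle. If $x_0$ is a minimizer of $f$, then $k_{t+h\mid t}f(x_0)\ge f(x_0)$ because $k$ is a probability kernel, hence $L_tf(x_0)\ge0$. Applied to $f$ and $-f$ with $f$ constant, it also gives $L_t1=0$. We then appeal to Courrège's theorem: a linear operator on $\cC^2_c$ satisfying the positive maximum principle is a Lévy-type operator, namely drift plus nonnegative-definite second-order part plus a jump integral. The nonnegativity of the Hessian coefficient is seen by applying the principle to $f(y)=\langle v,y-x_0\rangle^2\chi(y)$, which has a minimum at $x_0$. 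Under the stated restriction to finite jump activity, the compensator term in the Lévy measure disappears and we recover the finite kernel $Q_t=\lambda_{t,x}J_t$.

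\textbf{Main obstacle.} The hard part is this necessity direction. Courrège's theorem requires genuine functional-analytic work: it uses distributions of order at most two supported at a point, and separates the local part from the nonlocal part via cutoffs. Given the notes' "standard regularity assumptions" framing, the plan is to cite it rather than reprove it, and to verify only the minimum-principle hypothesis and the finite-activity reduction explicitly.
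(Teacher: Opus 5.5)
Your proposal is correct and follows the same route as the paper's sketch: Taylor expansion for the flow, It\^o's formula for the diffusion and the no-jump/one-jump split for the jump part give sufficiency, and the conditional minimum principle gives necessity. You name Courr\`ege's theorem for the step the paper leaves as ``one can show'', and you supply the finite-state derivation of the rate matrix that the paper asserts without argument. One caveat: the single test function $\langle v,y-x_0\rangle^2\chi(y)$ also picks up the nonnegative jump contribution $\int f\,\dd Q_t(\cdot\,;x_0)$, so to isolate positivity of the second-order coefficient you must shrink the cutoff to $\chi((y-x_0)/\varepsilon)$ and let $\varepsilon\to0$ (Courr\`ege's theorem already gives this).
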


\subsection{Two probability paths, many more generators}

A single probability path may be realized by very different Markov processes.
This is one of the main conceptual differences between a path-based viewpoint and a process-level viewpoint.
Let us work out two basic examples.

\begin{proposition}[Flow solution for the Gaussian conditional path]
\label{prop:flow_solution_condot}
Consider the conditional path on $\mathbb{R}^d$
\[
X^\circ_t=(1-t)\xi^\circ+t Z^\circ,
\qquad \xi^\circ\sim \mathcal N(0,\mathrm{Id}),
\]
so that
\[
p_t(\cdot\mid Z^\circ=z^\circ)=\mathcal N(tz^\circ,(1-t)^2 \mathrm{Id}).
\]
Then this path is generated by the conditional flow field
\[
a_t(x\mid z^\circ)=\frac{z^\circ-x}{1-t},
\qquad 0\leq t<1.
\]
Equivalently, the conditional generator is
\[
L_t^{z^\circ} f(x)= \langle a_t(x\mid z^\circ), \nabla f(x)\rangle.
\]
\end{proposition}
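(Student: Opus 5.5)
The plan is to check directly that the deterministic flow driven by $a_t(\cdot\mid z^\circ)$ reproduces the marginals $\mathcal N(tz^\circ,(1-t)^2\mathrm{Id})$. The cleanest route is pathwise: fix $z^\circ$ and $\xi^\circ$, and consider the kinematic curve $x_t := (1-t)\xi^\circ + t z^\circ$. Differentiating in $t$ gives $\dot x_t = z^\circ - \xi^\circ$. Solving the interpolation for the noise, for $t<1$, gives $\xi^\circ = (x_t - t z^\circ)/(1-t)$. Substituting,
\begin{align*}
\dot x_t = z^\circ - \frac{x_t - t z^\circ}{1-t} = \frac{(1-t)z^\circ - x_t + t z^\circ}{1-t} = \frac{z^\circ - x_t}{1-t} = a_t(x_t\mid z^\circ).
\end{align*}
So each interpolation path is an integral curve of the ODE $\dd x_t = a_t(x_t\mid z^\circ)\dd t$, with initial condition $x_0=\xi^\circ\sim\mathcal N(0,\mathrm{Id})$.

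Next I would argue that the ODE flow is exactly this family of curves. The field $x\mapsto a_t(x\mid z^\circ)$ is affine with Lipschitz constant $1/(1-t)$, which is bounded on $[0,1-\delta]$ for every $\delta>0$. Cauchy--Lipschitz therefore gives uniqueness on $[0,1)$. Hence the Markov (deterministic) process started at $X_0\sim p_0$ is $X_t=(1-t)X_0+tz^\circ$, whose law is $\mathcal N(tz^\circ,(1-t)^2\mathrm{Id})$. Its endpoint tends to $\delta_{z^\circ}$ as $t\to1$, consistent with the singularity of the field at $t=1$.

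To connect with the generator formulation, I would then verify Proposition~\ref{prop:KFE_weak} for $L_t^{z^\circ}$. For smooth bounded test functions $f$ with bounded gradient, the chain rule gives
\begin{align*}
\frac{\dd}{\dd t}\mathbb E[f(X_t)]=\mathbb E[\langle\nabla f(X_t),\dot X_t\rangle]=\mathbb E[\langle a_t(X_t\mid z^\circ),\nabla f(X_t)\rangle]=\langle p_t(\cdot\mid z^\circ),L_t^{z^\circ}f\rangle.
\end{align*}
Differentiating under the expectation is justified by domination: $|\dot X_t|\le|z^\circ|+|\xi^\circ|$ is integrable and $\nabla f$ is bounded. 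This also identifies the generator, since $(f(x+ha_t(x))-f(x))/h\to\langle a_t(x),\nabla f(x)\rangle$, matching the flow case of Theorem~\ref{thm:universal_generators}.

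The only delicate point is the behaviour at $t=1$, where $a_t$ blows up. I would handle it by restricting every statement to $t\in[0,1)$, as the proposition does, and recovering the endpoint law by continuity of $t\mapsto X_t$.
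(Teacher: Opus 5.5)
Your proposal is correct and follows the paper's own argument. You differentiate the interpolation pathwise to get $\dot X^\circ_t = Z^\circ-\xi^\circ$, eliminate the noise via $\xi^\circ=(X^\circ_t-tZ^\circ)/(1-t)$, and read off the ODE $\dot X^\circ_t = a_t(X^\circ_t\mid Z^\circ)$; your extra steps (Cauchy--Lipschitz uniqueness on $[0,1)$ and the direct check of the weak Kolmogorov forward equation by dominated differentiation) only make rigorous the passage from ``each trajectory solves the ODE'' to ``this is the generator,'' which the paper asserts without detail.
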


\begin{proof}
By construction,
\[
X^\circ_t=(1-t)\xi^\circ+t Z^\circ.
\]
Differentiating with respect to $t$, we obtain
\[
\dot X^\circ_t=Z^\circ-\xi^\circ.
\]
Since $\xi^\circ=(X^\circ_t-tZ^\circ)/(1-t)$, this can be rewritten as
\[
\dot X^\circ_t
=
Z^\circ-\frac{X^\circ_t-tZ^\circ}{1-t}
=
\frac{Z^\circ-X^\circ_t}{1-t}.
\]
Thus each conditional trajectory solves the ordinary differential equation
\[
\dot X^\circ_t=a_t(X^\circ_t\mid Z^\circ),
\qquad a_t(x\mid z^\circ)=\frac{z^\circ-x}{1-t},
\]
and therefore the corresponding generator is the flow generator stated above.
\end{proof}

This is the basic flow-matching construction.
Observe that the probability path itself did not force us to use a deterministic flow.
It only happened to admit one.
The next example shows the opposite behavior.

\begin{proposition}[Jump solution for the mixture path]
\label{prop:jump_solution_mixture}
Consider the conditional mixture path
\[
p_t(\dd x\mid Z^\circ=z^\circ)
=
(1-\kappa_t)p_0(\dd x)+\kappa_t\,\delta_{z^\circ}(\dd x),
\qquad 0\leq t\leq 1,
\]
with $\kappa_0=0$, $\kappa_1=1$.
Then, for $t<1$, it is generated by the pure-jump operator
\[
L_t^{z^\circ} f(x)
=
\frac{\dot\kappa_t}{1-\kappa_t}\bigl(f(z^\circ)-f(x)\bigr).
\]
Equivalently, the jump intensity is $\dot\kappa_t/(1-\kappa_t)$ and every jump lands at the endpoint $z^\circ$.
\end{proposition}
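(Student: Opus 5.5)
The plan is to verify the weak Kolmogorov forward equation of Proposition~\ref{prop:KFE_weak} for the conditional path: for every test function $f\in\mathcal{T}$ and every $t<1$, we show
\[
\frac{\dd}{\dd t}\langle p_t(\cdot\mid z^\circ),f\rangle=\langle p_t(\cdot\mid z^\circ),L_t^{z^\circ}f\rangle .
\]
The statement is then read as ``the path is generated by $L_t^{z^\circ}$'' in exactly the sense used throughout the chapter.

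\textbf{Left-hand side.} Because the mixture path is affine in $\kappa_t$, we have
\[
\langle p_t(\cdot\mid z^\circ),f\rangle=(1-\kappa_t)\langle p_0,f\rangle+\kappa_t f(z^\circ).
\]
Differentiating in $t$ gives
\[
\dot\kappa_t\bigl(f(z^\circ)-\langle p_0,f\rangle\bigr).
\]

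\textbf{Right-hand side.} Integrate $L_t^{z^\circ}f(x)=\lambda_t\bigl(f(z^\circ)-f(x)\bigr)$, with $\lambda_t:=\dot\kappa_t/(1-\kappa_t)$, against $p_t(\cdot\mid z^\circ)$. The term $f(z^\circ)$ is constant and integrates to itself. The term $f(x)$ gives $(1-\kappa_t)\langle p_0,f\rangle+\kappa_t f(z^\circ)$. The difference is $(1-\kappa_t)\bigl(f(z^\circ)-\langle p_0,f\rangle\bigr)$, and multiplying by $\lambda_t$ cancels the factor $1-\kappa_t$, which recovers exactly the left-hand side. The key point is that the Dirac component contributes nothing, because $f(z^\circ)-f(z^\circ)=0$: mass already sitting at $z^\circ$ stays there.

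\textbf{Probabilistic interpretation.} We will also check that $L_t^{z^\circ}$ is a legitimate jump generator in the sense of Theorem~\ref{thm:universal_generators}. Its intensity is $\lambda_{t,x}=\lambda_t\geq 0$, which holds since $\kappa$ is non-decreasing and $\kappa_t<1$. Its jump law is $J_t(\cdot;x)=\delta_{z^\circ}$. The matching process is explicit: start at $X_0\sim p_0$, and jump once to $z^\circ$ at the first arrival of an inhomogeneous Poisson clock with rate $\lambda_t$. The survival probability is
\[
\exp\Bigl(-\int_0^t \lambda_s\,\dd s\Bigr)=\exp\bigl(\log(1-\kappa_t)\bigr)=1-\kappa_t,
\]
using $\kappa_0=0$. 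So $X_t$ is still distributed as $p_0$ with probability $1-\kappa_t$ and equals $z^\circ$ with probability $\kappa_t$, which reconfirms the marginals directly.

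\textbf{Main obstacle.} The only delicate point is the singularity of $\lambda_t$ as $t\to1$, where $\kappa_t\to1$. This is why the claim is restricted to $t<1$. On any $[0,1-\epsilon]$ the rate is bounded, so the generator limit holds uniformly in $x$ for bounded $f$. Indeed $k_{t+h\mid t}f(x)-f(x)=\bigl(1-e^{-\int_t^{t+h}\lambda_s\,\dd s}\bigr)\bigl(f(z^\circ)-f(x)\bigr)$, and this is $h\lambda_t\bigl(f(z^\circ)-f(x)\bigr)+o(h)$ uniformly in $x$. We would add only a remark that the endpoint $t=1$ is reached by continuity of the marginals, since the survival probability $1-\kappa_t\to0$.
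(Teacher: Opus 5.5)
Your proposal is correct and follows the paper's proof: both verify the weak Kolmogorov forward equation by differentiating $(1-\kappa_t)\langle p_0,f\rangle+\kappa_t f(z^\circ)$ and integrating $L_t^{z^\circ}f$ against the mixture, where the factor $1-\kappa_t$ cancels against the intensity. Your explicit Poisson-clock construction (survival probability $1-\kappa_t$) and the uniform-in-$x$ check of the generator limit are extra confirmations that the paper's proof does not include, and they are consistent with it.
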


\begin{proof}
Let $f\in\mathcal T$.
By definition of the path,
\[
\langle p_t(\cdot\mid Z^\circ=z^\circ),f\rangle
=
(1-\kappa_t)\langle p_0,f\rangle+\kappa_t f(z^\circ).
\]
Differentiating in time gives
\[
\frac{\dd}{\dd t}\langle p_t(\cdot\mid Z^\circ=z^\circ),f\rangle
=
\dot\kappa_t\bigl(f(z^\circ)-\langle p_0,f\rangle\bigr).
\]
On the other hand,
\begin{align*}
\langle p_t(\cdot\mid Z^\circ=z^\circ),L_t^{z^\circ} f\rangle
&=
\frac{\dot\kappa_t}{1-\kappa_t}
\int_S \bigl(f(z^\circ)-f(x)\bigr)
\bigl((1-\kappa_t)p_0(\dd x)+\kappa_t\delta_{z^\circ}(\dd x)\bigr)
\\
&=
\dot\kappa_t\bigl(f(z^\circ)-\langle p_0,f\rangle\bigr).
\end{align*}
Thus the weak Kolmogorov forward equation holds.
\end{proof}

This example is particularly instructive.
A mixture path is badly behaved from a pure-flow perspective because mass must disappear from one region and reappear somewhere else.
For a jump generator, this is exactly the native mechanism.
Conversely, the Gaussian path of Proposition~\ref{prop:flow_solution_condot} is especially natural for deterministic transport.
The choice of path and the choice of generator class are therefore tightly coupled.

\begin{remark}[Beyond flows and diffusions]
One of the main contributions of the generator-matching viewpoint is to make jump models in $\mathbb{R}^d$ valid generative models rather than secondary examples.
They are not mere discretizations of diffusions.
They represent genuinely different sample-path geometries, while still solving the same marginal transport problem through the Kolmogorov forward equation.
\end{remark}

\subsection{Combining generators and building multimodal models}

The Kolmogorov forward equation is linear in the generator.
This fact allows us to combine distinct Markov models without changing the marginal probability path.

\begin{proposition}[Linear combinations of generators]
\label{prop:combining_generators}
Let $(p_t)_{0\leq t\leq 1}$ be a probability path.

\begin{itemize}[leftmargin=*]
\item
If $L_t$ and $\widetilde L_t$ both generate $(p_t)_t$, then every convex combination
\[
\bar L_t=\alpha_t L_t+(1-\alpha_t)\widetilde L_t,
\qquad 0\leq \alpha_t\leq 1,
\]
also generates $(p_t)_t$.

\item
If $L_t$ generates $(p_t)_t$ and $L_t^{\circ}$ satisfies
\[
\langle p_t,L_t^{\circ} f\rangle =0
\qquad \text{for all }f\in\mathcal T,
\]
then $L_t+\beta_t L_t^{\circ}$ also generates $(p_t)_t$ for every non-negative scalar function $\beta_t$.
\end{itemize}
\end{proposition}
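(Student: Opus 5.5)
The plan is to argue entirely at the level of the weak Kolmogorov forward equation of Proposition~\ref{prop:KFE_weak}. That equation is the criterion for ``generating $(p_t)_t$'', and it is linear in the operator. Concretely, I take ``$L_t$ generates $(p_t)_t$'' to mean
\[
\frac{\dd}{\dd t}\langle p_t,f\rangle=\langle p_t,L_t f\rangle\qquad\text{for all } f\in\mathcal T \text{ and } t\in[0,1],
\]
and the two claims then reduce to bilinearity of the pairing $\langle p_t,\cdot\rangle$.

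For the first item, fix $f\in\mathcal T$ and $t$. Linearity of integration against $p_t$ gives
\[
\langle p_t,\bar L_t f\rangle=\alpha_t\langle p_t,L_tf\rangle+(1-\alpha_t)\langle p_t,\widetilde L_tf\rangle .
\]
Each term on the right equals $\frac{\dd}{\dd t}\langle p_t,f\rangle$ by hypothesis, so the sum equals $(\alpha_t+1-\alpha_t)\frac{\dd}{\dd t}\langle p_t,f\rangle=\frac{\dd}{\dd t}\langle p_t,f\rangle$.

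For the second item, the same computation gives
\[
\langle p_t,(L_t+\beta_tL_t^\circ)f\rangle=\langle p_t,L_tf\rangle+\beta_t\cdot 0=\frac{\dd}{\dd t}\langle p_t,f\rangle .
\]

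The only genuine point, and the step I expect to need care, is making sure the new operator is still the generator of an actual Markov process. The identity alone only says that it solves the forward equation. The constraints $0\le\alpha_t\le1$ and $\beta_t\ge0$ are exactly what handles this.

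I will argue using the structure in Theorem~\ref{thm:universal_generators}. Drift parts combine linearly. Diffusion matrices $\sigma\sigma^\top$ combine with nonnegative weights, so they stay positive semidefinite. Jump measures combine with nonnegative weights, so they stay nonnegative measures. Equivalently, the conditional minimum principle ($L_tf(x_0)\ge0$ at a minimizer $x_0$ of $f$) is preserved under nonnegative combinations. For the second item this needs $L_t^\circ$ itself to be a generator, such as a Langevin-type correction; I will state this as the implicit regularity assumption.

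A cheap sanity check is the finite-state case. There, nonnegative combinations of rate matrices are rate matrices: off-diagonal entries stay $\ge0$ and columns still sum to $0$. This is the case where the argument is fully rigorous.
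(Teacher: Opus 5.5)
Your proof is correct and follows the paper's argument: both items reduce to linearity of the pairing $\langle p_t,\cdot\rangle$ in the weak Kolmogorov forward equation, exactly as you compute. Your further remark goes beyond the paper, which only checks the forward equation. The sign constraints $0\le\alpha_t\le1$ and $\beta_t\ge0$ play no role in that identity (any real $\alpha_t$ or $\beta_t$ would do), and you correctly note that they only serve to keep the new operator a genuine Markov generator, which for the second item needs $L_t^\circ$ itself to be a generator.
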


\begin{proof}
Both claims follow immediately from the linearity of the weak equation
\[
\frac{\dd}{\dd t}\langle p_t,f\rangle = \langle p_t,L_t f\rangle.
\]
For instance, if both $L_t$ and $\widetilde L_t$ satisfy the equation, then
\[
\langle p_t,\bar L_t f\rangle
=
\alpha_t\langle p_t,L_t f\rangle+(1-\alpha_t)\langle p_t,\widetilde L_t f\rangle
=
\frac{\dd}{\dd t}\langle p_t,f\rangle.
\]
The second statement is similar.
\end{proof}

The first construction is a \emph{Markov superposition}.
For example, one may combine a flow generator and a jump generator solving the same equation, and thereby obtain a hybrid process with the same marginals but a different sample-path geometry.
The second construction corresponds to adding a component that is invisible at the level of the marginals.
Predictor--corrector samplers from the diffusion literature fit naturally into this philosophy.

A second use of linearity appears in product spaces.
Suppose that $S=S_1\times S_2$ and that we have conditional paths
\[
p_t^{(1)}(\cdot\mid Z_1^\circ),
\qquad
p_t^{(2)}(\cdot\mid Z_2^\circ)
\]
on the two components.
If the conditional path on the product space factorizes as
\[
p_t\bigl(\dd(x_1,x_2)\mid (Z_1^\circ,Z_2^\circ)\bigr)
=
p_t^{(1)}(\dd x_1\mid Z_1^\circ)\,p_t^{(2)}(\dd x_2\mid Z_2^\circ),
\]
then a conditional generator is obtained simply by adding the componentwise generators:
\[
L_t^{(Z_1^\circ,Z_2^\circ)}f(x_1,x_2)
=
L_t^{1,Z_1^\circ}\bigl(f(\cdot,x_2)\bigr)(x_1)
+
L_t^{2,Z_2^\circ}\bigl(f(x_1,\cdot)\bigr)(x_2).
\]
This is the formal reason why multimodal generator matching is relatively straightforward to set up: one may keep the update mechanism appropriate for each modality, while allowing the parameters of each component to depend on the full multimodal state.

\section{Inference and stability for generator matching}

\subsection{A dual stability identity via the backward Kolmogorov equation}

Assume for now that a certain training strategy has delivered us an approximate generator $\widehat L_t$ rather than the exact one.
To understand how generator errors propagate to the final law, it is natural to turn to the Kolmogorov backward equation.
This dual viewpoint is standard in classical stochastic analysis limits~\cite{kunita1990stochastic}, but here it becomes especially useful because generator errors are exactly what the training loss controls.

\begin{proposition}[Duality identity for generator errors]
\label{prop:duality_generator_error}
Let $(p_t)_{0\leq t\leq 1}$ and $(\widehat p_t)_{0\leq t\leq 1}$ solve the weak Kolmogorov forward equations associated with generators $L_t$ and $\widehat L_t$, with the same initial condition $p_0=\widehat p_0$.
Let $f\in\mathcal T$, and let $(\widehat{f}_t)_{0\leq t\leq 1}$ solve the backward Kolmogorov equation
\[
\partial_t \widehat{f}_t + \widehat L_t\widehat{f}_t =0,
\qquad
\widehat{f}_1=f.
\]
Then
\[
\langle p_1-\widehat p_1,f\rangle
=
\int_0^1 \langle p_t,(L_t-\widehat L_t)\widehat{f}_t\rangle\,\dd t.
\]
\end{proposition}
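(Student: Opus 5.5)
The plan is the classical ``interpolation along the true flow'' argument. I consider the scalar function $\phi(t) := \langle p_t, \widehat{f}_t\rangle$ for $t\in[0,1]$. It pairs the true marginal with the backward solution of the approximate dynamics. At the endpoints, $\phi(1) = \langle p_1, f\rangle$ because $\widehat f_1 = f$. Also $\phi(0) = \langle p_0, \widehat f_0\rangle = \langle \widehat p_0, \widehat f_0\rangle$ by the common initial condition.

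The first step is to show that $t\mapsto \langle \widehat p_t, \widehat f_t\rangle$ is constant, so that $\phi(0) = \langle \widehat p_1, f\rangle$. By the product rule, its derivative is $\langle \partial_t \widehat p_t, \widehat f_t\rangle + \langle \widehat p_t, \partial_t \widehat f_t\rangle$. The first term equals $\langle \widehat p_t, \widehat L_t \widehat f_t\rangle$ by the weak Kolmogorov forward equation (Proposition~\ref{prop:KFE_weak}) applied with the time-dependent test function. The second term equals $-\langle \widehat p_t, \widehat L_t \widehat f_t\rangle$ by the backward equation. The two cancel, so $\phi(1)-\phi(0) = \langle p_1 - \widehat p_1, f\rangle$.

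The second step is the same computation for $\phi$ itself:
\[
\phi'(t) = \langle p_t, L_t \widehat f_t\rangle + \langle p_t, \partial_t \widehat f_t\rangle = \langle p_t, (L_t - \widehat L_t)\widehat f_t\rangle .
\]
Integrating from $0$ to $1$ and using the first step gives the claimed identity. The structure mirrors Corollary~\ref{coro:integrated-kl-of-transport}: a fundamental-theorem-of-calculus telescoping in which the ``adjoint'' of one dynamic cancels against the other.

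The main obstacle is justifying the product rule for $\langle p_t, g_t\rangle$ when the test function $g_t = \widehat f_t$ depends on time. Proposition~\ref{prop:KFE_weak} is stated only for fixed $f\in\mathcal T$. I would handle this by writing the difference quotient as
\[
\frac{\langle p_{t+h}, g_{t+h}\rangle - \langle p_t, g_t\rangle}{h} = \Bigl\langle p_{t+h}, \frac{g_{t+h}-g_t}{h}\Bigr\rangle + \frac{\langle p_{t+h}-p_t, g_t\rangle}{h}.
\]
The second term converges to $\langle p_t, L_t g_t\rangle$ by Proposition~\ref{prop:KFE_weak} with $g_t$ frozen. For the first term, I would assume (as part of the standing ``suitable regularity'' conventions) that $\widehat f_t\in\mathcal T$ and that $h^{-1}(g_{t+h}-g_t)\to \partial_t g_t$ uniformly in $x$. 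Then weak continuity of $t\mapsto p_t$, which follows from differentiability of $t\mapsto\langle p_t,f\rangle$, together with the boundedness of the uniformly convergent quotients lets me pass to the limit. I would also assume enough integrability in $t$ to apply the fundamental theorem of calculus to $\phi$.
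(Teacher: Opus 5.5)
Your argument is the paper's proof: differentiate $t\mapsto\langle p_t,\widehat f_t\rangle$ to get $\langle p_t,(L_t-\widehat L_t)\widehat f_t\rangle$, observe that $t\mapsto\langle \widehat p_t,\widehat f_t\rangle$ has zero derivative, integrate over $[0,1]$, and use $p_0=\widehat p_0$ and $\widehat f_1=f$. Your difference-quotient justification of the product rule for the time-dependent test function is extra care that the paper omits. One caveat belongs with your regularity assumptions: passing to the limit in $\langle p_{t+h},\partial_t\widehat f_t\rangle$ requires continuity of $t\mapsto p_t$ tested against $\partial_t\widehat f_t=-\widehat L_t\widehat f_t$, and this function need not lie in $\mathcal T$.
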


\begin{proof}
Differentiate the pairing $t\mapsto \langle p_t,\widehat{f}_t\rangle$.
By the forward equation for $p_t$ and the backward equation for $\widehat{f}_t$,
\begin{align*}
\frac{\dd}{\dd t}\langle p_t,\widehat{f}_t\rangle
&=
\langle p_t,L_t\widehat{f}_t\rangle+\langle p_t,\partial_t\widehat{f}_t\rangle
\\
&=
\langle p_t,(L_t-\widehat L_t)\widehat{f}_t\rangle.
\end{align*}
Similarly,
\[
\frac{\dd}{\dd t}\langle \widehat p_t,\widehat{f}_t\rangle
=
\langle \widehat p_t,\widehat L_t\widehat{f}_t+\partial_t\widehat{f}_t\rangle
=
0.
\]
Integrating both identities from $0$ to $1$, and using $p_0=\widehat p_0$ and $\widehat{f}_1=f$, we obtain
\begin{align*}
\langle p_1,f\rangle-\langle \widehat p_1,f\rangle
&=
\langle p_1,\widehat{f}_1\rangle-\langle \widehat p_1,\widehat{f}_1\rangle
\\
&=
\int_0^1 \langle p_t,(L_t-\widehat L_t)\widehat{f}_t\rangle\,\dd t.
\end{align*}
\end{proof}

This formula is fundamental.
It shows that final discrepancies are controlled by two ingredients only:
\begin{enumerate}
\item the size of the generator error $L_t-\widehat L_t$ along the true path,
\item the regularity of the dual solution $\widehat{f}_t$ to the backward equation.
\end{enumerate}
The first item is the estimation problem.
The second is a stability problem.
The next proposition addresses the latter in a simple but useful case.

\begin{proposition}[Backward Lipschitz propagation with additive diffusion]
\label{prop:backward_lipschitz}
Assume that
\[
\widehat L_t \phi(x)=\langle \widehat a_t(x), \nabla \phi(x) \rangle +b_t^2\,\Delta \phi(x),
\]
where $b_t^2\geq 0$ depends only on time, and $\widehat a_t:\mathbb{R}^d\to\mathbb{R}^d$ is of class $C^1$ in space.
Assume moreover that
\[
\lambda_{\max}\bigl(\nabla \widehat a_t(x)\bigr)\leq \ell_t
\qquad \text{for all }x\in\mathbb{R}^d,
\]
for some integrable function $\ell:[0,1]\to\mathbb{R}$.
Let $\widehat{f}_t$ solve
\[
\partial_t \widehat{f}_t + \widehat L_t\widehat{f}_t =0,
\qquad
\widehat{f}_1=f,
\]
with $f\in \mathrm{Lip}(\mathbb{R}^d)$.
Then, for every $t\in[0,1]$,
\[
\|\widehat{f}_t\|_{\mathrm{Lip}}
\leq
\exp\!\left(\int_t^1 \ell_u\,\dd u\right)
\|f\|_{\mathrm{Lip}}.
\]
\end{proposition}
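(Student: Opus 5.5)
We prove this with the stochastic (Feynman--Kac) representation of $\widehat{f}_t$ plus a synchronous coupling of two copies of the SDE. For fixed $t$ and $x\in\mathbb{R}^d$, let $(X^{t,x}_s)_{s\in[t,1]}$ solve
\[
\dd X^{t,x}_s=\widehat a_s(X^{t,x}_s)\dd s+\sqrt{2}\,b_s\dd B_s,\qquad X^{t,x}_t=x.
\]
By the Itô formula (Theorem~\ref{thm:ito-multidimension}) applied to $s\mapsto \widehat{f}_s(X^{t,x}_s)$, the $\dd s$ part is $(\partial_s\widehat{f}_s+\widehat L_s\widehat{f}_s)(X^{t,x}_s)\dd s=0$ by the backward equation. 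Only a martingale term remains, so taking expectations gives
\[
\widehat{f}_t(x)=\mathbb{E}\bigl[f(X^{t,x}_1)\bigr].
\]
Hence
\[
|\widehat{f}_t(x)-\widehat{f}_t(y)|\leq \|f\|_{\mathrm{Lip}}\,\mathbb{E}\|X^{t,x}_1-X^{t,y}_1\|,
\]
and it suffices to bound the distance between two solutions started at $x$ and $y$.

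The key step is the synchronous coupling: drive $X^{t,x}$ and $X^{t,y}$ with the \emph{same} Brownian motion. Because the diffusion coefficient $\sqrt2 b_s$ depends only on time, the noise cancels in the difference $\Delta_s:=X^{t,x}_s-X^{t,y}_s$. It therefore solves, pathwise, the random ODE
\[
\dot\Delta_s=\widehat a_s(X^{t,x}_s)-\widehat a_s(X^{t,y}_s).
\]
We then compute
\[
\frac{\dd}{\dd s}\tfrac12\|\Delta_s\|^2=\Bigl\langle \Delta_s,\int_0^1\nabla\widehat a_s\bigl(X^{t,y}_s+r\Delta_s\bigr)\dd r\;\Delta_s\Bigr\rangle .
\]
Only the symmetric part of the Jacobian contributes to the quadratic form $\langle v,Mv\rangle$. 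We therefore read $\lambda_{\max}(\nabla\widehat a_t)$ as the top eigenvalue of the symmetrized Jacobian, which is the natural meaning here. This gives $\frac{\dd}{\dd s}\|\Delta_s\|^2\leq 2\ell_s\|\Delta_s\|^2$. Grönwall's lemma then yields, almost surely,
\[
\|\Delta_1\|\leq \exp\Bigl(\int_t^1\ell_u\dd u\Bigr)\|x-y\|.
\]
Taking expectations and inserting this into the Feynman--Kac bound gives the claim.

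The main obstacle is justifying the representation $\widehat{f}_t(x)=\mathbb{E}[f(X^{t,x}_1)]$. This needs well-posedness of the SDE, uniqueness for the backward equation, and enough regularity for Itô's formula; the one-sided bound alone does not give global Lipschitz continuity of $\widehat a$. In the spirit of these notes we would assume $\widehat a$ is regular enough, for instance locally Lipschitz with the one-sided bound preventing explosion, and that the martingale term has zero mean. If the regularity is in doubt, we would first prove the estimate for smooth $f$ and smooth $\widehat a$ and pass to the limit. Alternatively, one can bypass probability entirely: differentiate the backward PDE in $x$ to obtain a transport--diffusion equation for $g_t=\nabla\widehat{f}_t$,
\[
\partial_t g+\langle\widehat a,\nabla\rangle g+b^2\Delta g+(\nabla\widehat a)^\top g=0,
\]
and apply a maximum principle to $\|g\|^2$, whose zeroth-order term is bounded by $2\ell_t\|g\|^2$. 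This reaches the same bound. When $b_t\equiv0$ the statement reduces to the classical ODE flow estimate, which is a useful sanity check.
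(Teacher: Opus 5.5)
Your proposal is correct and follows the paper's proof essentially step for step: the Feynman--Kac representation $\widehat f_t(x)=\mathbb{E}[f(X_1^{t,x})]$, a synchronous coupling with the same Brownian motion so that the additive noise cancels, the bound $\frac{\dd}{\dd s}\|\Delta_s\|^2\le 2\ell_s\|\Delta_s\|^2$, and Gr\"onwall. Your explicit reading of $\lambda_{\max}(\nabla\widehat a_t)$ as the top eigenvalue of the symmetrized Jacobian is a useful precision, since it is exactly what the paper's phrase ``one-sided Lipschitz with constant $\ell_s$'' uses implicitly.
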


\begin{proof}
Fix $t\in[0,1]$ and $x,y\in\mathbb{R}^d$.
Let $(B_s)_{s\geq t}$ be a Brownian motion, and consider the coupled diffusions
\[
X_s^{t,x}=x+\int_t^s \widehat a_u(X_u^{t,x})\,\dd u+\int_t^s \sqrt{2}b_u\,\dd B_u,
\]
\[
X_s^{t,y}=y+\int_t^s \widehat a_u(X_u^{t,y})\,\dd u+\int_t^s \sqrt{2}b_u\,\dd B_u,
\qquad s\in[t,1],
\]
driven by the same Brownian motion.
By the Feynman--Kac representation for the backward equation,
\[
\widehat{f}_t(x)=\mathbb E\bigl[f(X_1^{t,x})\bigr],
\qquad
\widehat{f}_t(y)=\mathbb E\bigl[f(X_1^{t,y})\bigr].
\]
Therefore,
\[
|\widehat{f}_t(x)-\widehat{f}_t(y)|
\leq
\|f\|_{\mathrm{Lip}}\,\mathbb E\bigl[|X_1^{t,x}-X_1^{t,y}|\bigr].
\]

Set $Z_s:=X_s^{t,x}-X_s^{t,y}$.
Because the two diffusions use the same Brownian motion and the diffusion coefficient is additive, the noise cancels and we obtain
\[
Z_s=x-y+\int_t^s \bigl(\widehat a_u(X_u^{t,x})-\widehat a_u(X_u^{t,y})\bigr)\,\dd u.
\]
Hence
\[
\frac{\dd}{\dd s}|Z_s|^2
=
2\,\langle Z_s, \widehat a_s(X_s^{t,x})-\widehat a_s(X_s^{t,y})\rangle.
\]
By the mean-value theorem, and because $\widehat a_s$ is one-sided Lipschitz with constant $\ell_s$, we have
\[
\langle Z_s, \widehat a_s(X_s^{t,x})-\widehat a_s(X_s^{t,y})\rangle
\leq
\ell_s |Z_s|^2,
\]
so that
\[
\frac{\dd}{\dd s}|Z_s|^2\leq 2\ell_s |Z_s|^2.
\]
Gronwall's lemma yields
\[
|Z_s|^2
\leq
\exp\!\left(2\int_t^s \ell_u\,\dd u\right)|x-y|^2,
\]
and therefore
\[
|X_1^{t,x}-X_1^{t,y}|
\leq
\exp\!\left(\int_t^1 \ell_u\,\dd u\right)|x-y|
\qquad \text{almost surely}.
\]
Substituting into the previous bound gives
\[
|\widehat{f}_t(x)-\widehat{f}_t(y)|
\leq
\|f\|_{\mathrm{Lip}}
\exp\!\left(\int_t^1 \ell_u\,\dd u\right)|x-y|.
\]
Taking the supremum over $x\neq y$ concludes the proof.
\end{proof}

Combined with Proposition~\ref{prop:duality_generator_error}, this estimate shows how one-sided Lipschitz bounds on the learnt drift control the amplification of generator errors.
This is the basic mechanism behind many Wasserstein-type stability estimates for score and flow models. As Proposition~\ref{prop:drift_diff_lip_coupling} shows, the critical analytic ingredient making this propagation possible is bounding $\lambda_{\max}(\nabla \widehat a_t)$ (or its exact counterpart $\lambda_{\max}(\nabla a_t)$). Establishing this bound is the core objective of the next chapter.

\begin{proposition}\label{prop:drift_diff_lip_coupling}
Assume that, for each $t \in [0,1]$,
\[
L_t \phi(x) = \langle a_t(x) \nabla \phi(x) \rangle + b_t^2 \Delta \phi(x),
\qquad
\widehat L_t \phi(x) = \widehat \langle a_t(x), \nabla \phi(x) \rangle + b_t^2 \Delta \phi(x),
\]
and that $(p_t)_{0\le t\le 1}$ and $(\widehat p_t)_{0\le t\le 1}$ solve the corresponding weak
Kolmogorov forward equations, with the same initial condition
\[
p_0 = \widehat p_0.
\]
Assume moreover that $\widehat a_t$ is of class $C^1$ in space and that
\[
\lambda_{\max}\bigl(\nabla \widehat a_t(x)\bigr) \le \ell_t
\qquad \text{for all } x \in \mathbb{R}^d,
\]
for some integrable function $\ell \in L^1([0,1])$.
Then
\[
W_1(p_1,\widehat p_1)
\le
\int_0^1
\exp\!\left(\int_t^1 \ell_u\,\dd u\right)
\|a_t-\widehat a_t\|_{L^1(p_t)}\,\dd t.
\]
\end{proposition}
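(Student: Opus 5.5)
The plan is to combine the duality identity of Proposition~\ref{prop:duality_generator_error} with the Lipschitz propagation estimate of Proposition~\ref{prop:backward_lipschitz}, and then close with Kantorovich--Rubinstein duality for $W_1$. That duality gives
\[
W_1(p_1,\widehat p_1)=\sup_{\|f\|_{\mathrm{Lip}}\le 1}\langle p_1-\widehat p_1,f\rangle ,
\]
so it suffices to bound $\langle p_1-\widehat p_1,f\rangle$ uniformly over $1$-Lipschitz test functions $f$. By translation invariance we may also assume $f(0)=0$.

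Fix such an $f$, and let $\widehat f_t$ solve the backward Kolmogorov equation $\partial_t\widehat f_t+\widehat L_t\widehat f_t=0$ with $\widehat f_1=f$. Since $p_0=\widehat p_0$, Proposition~\ref{prop:duality_generator_error} yields
\[
\langle p_1-\widehat p_1,f\rangle=\int_0^1\langle p_t,(L_t-\widehat L_t)\widehat f_t\rangle\,\dd t .
\]
The diffusion parts of $L_t$ and $\widehat L_t$ coincide, since both use the same $b_t^2\Delta$. Hence $(L_t-\widehat L_t)\widehat f_t(x)=\langle a_t(x)-\widehat a_t(x),\nabla\widehat f_t(x)\rangle$, and Cauchy--Schwarz gives
\[
|\langle p_t,(L_t-\widehat L_t)\widehat f_t\rangle|\le \|\nabla \widehat f_t\|_\infty\int\|a_t-\widehat a_t\|\,\dd p_t=\|\widehat f_t\|_{\mathrm{Lip}}\,\|a_t-\widehat a_t\|_{L^1(p_t)} .
\]
Here we use $\|\nabla\widehat f_t\|_\infty=\|\widehat f_t\|_{\mathrm{Lip}}$ for $C^1$ functions. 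Proposition~\ref{prop:backward_lipschitz} applies verbatim, because its hypothesis is exactly the assumption $\lambda_{\max}(\nabla\widehat a_t)\le\ell_t$. It gives $\|\widehat f_t\|_{\mathrm{Lip}}\le\exp(\int_t^1\ell_u\,\dd u)$. Integrating in $t$ and taking the supremum over $f$ then gives the claimed bound.

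The main obstacle is regularity. Proposition~\ref{prop:duality_generator_error} is stated for $f\in\mathcal T$, which consists of bounded functions, and it needs $\widehat f_t$ differentiable in space so that $\widehat L_t\widehat f_t$ makes sense. A general $1$-Lipschitz $f$ is neither bounded nor smooth. I would first take $f$ smooth, compactly supported and $1$-Lipschitz; in that case the Feynman--Kac representation used in the proof of Proposition~\ref{prop:backward_lipschitz} shows $\widehat f_t$ is bounded, and I would assume it is $C^1$ in space, in line with the notes' standing regularity conventions. I would then extend to all $1$-Lipschitz $f$ by mollification and truncation, using dominated convergence under the assumption that $p_1$ and $\widehat p_1$ have finite first moments, which is needed anyway for $W_1$ to be finite. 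Lipschitz constants are preserved (up to $1+\varepsilon$) under these approximations, so the bound passes to the limit.
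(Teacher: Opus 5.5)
Your proposal is correct and follows the paper's proof exactly. You apply the duality identity of Proposition~\ref{prop:duality_generator_error} with the backward solution started from a $1$-Lipschitz $f$, observe that the diffusion terms cancel so that only $\langle a_t-\widehat a_t,\nabla\widehat f_t\rangle$ remains, bound $\|\nabla\widehat f_t\|_\infty$ via Proposition~\ref{prop:backward_lipschitz}, and conclude by Kantorovich--Rubinstein duality; your additional truncation-and-mollification step, which places $f$ in the bounded class $\mathcal T$ before passing to the limit using finite first moments, is a sensible rigor addition that the paper simply omits.
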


\begin{proof}
Let $f:\mathbb{R}^d\to\mathbb{R}$ be $1$-Lipschitz, and let $(\phi_t)_{0\le t\le 1}$ solve
\[
\partial_t \phi_t + \widehat L_t \phi_t = 0,
\qquad
\phi_1 = f.
\]
By Proposition \ref{prop:duality_generator_error},
\[
\langle p_1-\widehat p_1,f\rangle
=
\int_0^1 \langle p_t,(L_t-\widehat L_t)\phi_t\rangle\,\dd t.
\]
Since
\[
(L_t-\widehat L_t)\phi_t(x)
=
\langle a_t(x)-\widehat a_t(x), \nabla \phi_t(x)\rangle,
\]
we get
\[
|\langle p_1-\widehat p_1,f\rangle|
\le
\int_0^1
\int_{\mathbb{R}^d}
|a_t(x)-\widehat a_t(x)|\,|\nabla \phi_t(x)|\,p_t(\dd x)\,\dd t.
\]
By Proposition \ref{prop:backward_lipschitz},
\[
\|\phi_t\|_{\mathrm{Lip}}
\le
\exp\!\left(\int_t^1 \ell_u\,\dd u\right)\|f\|_{\mathrm{Lip}}
\le
\exp\!\left(\int_t^1 \ell_u\,\dd u\right),
\]
hence
\[
|\langle p_1-\widehat p_1,f\rangle|
\le
\int_0^1
\exp\!\left(\int_t^1 \ell_u\,\dd u\right)
\|a_t-\widehat a_t\|_{L^1(p_t)}\,\dd t.
\]
Taking the supremum over all $1$-Lipschitz $f$ and using the Kantorovich--Rubinstein duality yields
\[
W_1(p_1,\widehat p_1)
\le
\int_0^1
\exp\!\left(\int_t^1 \ell_u\,\dd u\right)
\|a_t-\widehat a_t\|_{L^1(p_t)}\,\dd t.
\]
\end{proof}

\subsection{Empirical risk minimization and a multiscale statistical viewpoint}

Let us finally return to the learning problem itself.
Given data $Z^{\circ(1)},\ldots,Z^{\circ(n)}\sim p^\star$, one choses times $0 \leq t_k \leq 1$ and conditional states
\[
X^\circ_i\sim p_{t_k}(\cdot\mid Z^\circ=Z^{\circ(i)}),
\]
and minimizes an empirical version of the conditional loss,
\[
\widehat{\mathcal L}_{\mathrm{CGM}}(\theta)
=
\frac1n\sum_{i=1}^n
D_\Phi\bigl(F_{t_k}^{Z^{\circ(i)}}(X^\circ_{t_k}),F_{t_k}^\theta(X^\circ_{t_k})\bigr).
\]
At this level of generality, Generator Matching is just empirical risk minimization for a time-dependent regression problem.
The difficulty is that the regression problem is often highly non-uniform in time.

This non-uniformity is especially visible in score-based diffusion models.
Near the low-noise end of the path, the conditional targets become increasingly singular, the score norm grows, and the associated vector fields may have large Lipschitz constants.
Farther away from the data, the regression problem is statistically easier, but the generated law is then more sensitive to truncation bias.
The handwritten notes accompanying this chapter insist on exactly this point: \emph{one should not expect the same function class, the same resolution, or the same error budget to be optimal at every time scale.}

This leads naturally to three practical ideas.
First, one often truncates the time interval away from the singular endpoint and only learns on $[0,1-\epsilon]$ or, depending on the time convention, on $[\tau,\overline T]$ after a suitable reparameterization.
Second, one may use time weights in the loss in order to compensate for the uneven statistical difficulty across the path.
Third, and most importantly in nonparametric analyses, one decomposes time into dyadic or geometric blocks and allows the complexity of the estimator to vary from one block to the next.

\begin{remark}[Why multiscale time parameterizations appear naturally]
For a smooth data distribution, the smoothed laws $(p_t)_t$ become easier to estimate as noise increases, but the associated generator no longer describes the near-data regime accurately.
Conversely, the low-noise regime is statistically much harder and numerically less stable.
A multiscale architecture, with one level of approximation per time block, is therefore not just an implementation trick.
It reflects the intrinsic non-uniformity of the statistical problem along the probability path.
\end{remark}

At a conceptual level, the proof strategy behind recent minimax analyses of score-based models follows the same pattern as the one suggested by Propositions~\ref{prop:bregman_conditional_loss}, \ref{prop:duality_generator_error}, and \ref{prop:backward_lipschitz}:
\begin{enumerate}
\item control the regression error of the learnt generator on each time block,
\item propagate that error to the final distribution through the backward equation,
\item balance approximation, estimation, and truncation errors across the different time scales.
\end{enumerate}
The precise rates depend on the smoothness class of the data distribution and on the generator class under study, but the general mechanism is robust and extends well beyond classical diffusion models.

\begin{remark}[What the framework buys us]
The point of Generator Matching is not merely to rename known models.
It isolates three linear structures that are otherwise partly hidden:
\begin{itemize}[leftmargin=*]
\item the linearity of the generator in the test function,
\item the linearity of the Kolmogorov forward equation in the generator,
\item the affine dependence of Bregman divergences on the training target.
\end{itemize}
Together, these three facts explain why conditional regression suffices, why generators can be combined, and why multimodal or hybrid Markov models can be designed with very little additional formalism.
\end{remark}

The chapter may therefore be read as follows.
A probability path specifies \emph{what marginal evolution we want}.
A generator specifies \emph{how a Markov process realizes this evolution infinitesimally}.
Generator Matching turns this into a conditional supervised learning problem.
The rest --- flow matching, score-based diffusion, jump models, multimodal generation, stability estimates, and statistical rates --- follows from that single principle.

\bibliographystyle{alpha}
\bibliography{biblio}
\chapter{Statistical Motivations and Score Regularity}\label{chap:scoreregularity}
\minitoc

The goal of this chapter is to bridge the continuous-time generative frameworks constructed in Chapters~\ref{chap:score-based-generative-models} and~\ref{chap:generatormatching} with their statistical estimation from finite data. 

Once a probability path $(p_t)_{0\le t\le 1}$ has been fixed, generative modeling becomes a regression problem: learn the exact drift $a_t$ well enough so that the law of the learned dynamics remains close to the target law. We first expose why the minimax optimal recovery of a target distribution $p^\star$ hinges heavily on two precise regularity properties of the exact generative drift $a_t$: its one-sided Lipschitz constant (which governs dynamic stability) and its localized space-time Hölder smoothness (which governs neural approximation). After establishing this motivation, we formally and pedagogically prove that these regularities are indeed satisfied for standard Gaussian interpolations.

\section{The case of Gaussian interpolation with chosen diffusions}

We operate within the Gaussian interpolation framework introduced in Section~\ref{sec:common-setup}. Thus, the target distribution is a probability density $p^\star$ on $\mathbb{R}^d$, and we choose an explicit kinematic interpolating path $(p_t)_{0\le t\le 1}$ such that there exists a random variable:
\begin{equation}\label{eq:chapter3_reference_path}
X^\circ_t = m_t(Z^\circ)+\sigma_t \xi^\circ, \qquad t \in [0,1], \quad Z^\circ\sim p^\star, \quad \xi^\circ\sim \mathcal N(0,\mathrm{Id}),
\end{equation}
with $Z^\circ$ and $\xi^\circ$ independent, $m_0(z)=0, \sigma_0=1$, and $m_1(z)=z, \sigma_1=0$. The marginal law of $X^\circ_t$ is exactly $p_t$. 

It is clear from this definition that, conditionally on the target, $(X^\circ_t \mid Z^\circ=z) \sim \cN(m_t(z), \sigma_t^2 \mathrm{Id})$. As established in Chapter~\ref{chap:generatormatching}, solving the associated Fokker--Planck equation ensures that the causal generative SDE
\begin{equation}\label{eq:chapter3_exact_sde}
\dd X_t = a_t(X_t)\,\dd t + \sqrt{2}\,b_t\,\dd B_t, \qquad X_0\sim \mathcal N(0,\mathrm{Id}),
\end{equation}
exactly simulates this marginal path, yielding $X_1 \sim p^\star$. In the Gaussian-mixture setting ($\sigma_t>0$), the exact drift takes the explicit conditional expectation form:
\begin{equation}\label{eq:chapter3_exact_drift}
a_t(x)
= \mathbb E\left[\dot{m}_t(Z^\circ) + \left(\frac{\dot{\sigma}_t}{\sigma_t}-\frac{b_t^2}{\sigma_t^2}\right)\sigma_t\xi^\circ \;\middle|\; X^\circ_t=x\right]
= \mathbb E\left[\dot{m}_t(Z^\circ) + \eta_t\big(x-m_t(Z^\circ)\big) \;\middle|\; X^\circ_t=x\right],
\end{equation}
where $\eta_t := \dot{\sigma}_t/\sigma_t - b_t^2/\sigma_t^2$.

Because \eqref{eq:chapter3_exact_drift} is a conditional expectation, $a_t$ is uniquely characterized as the minimizer of a quadratic regression problem at the population level:
\begin{equation}\label{eq:chapter3_population_regression}
a_t \in \argmin_{\phi_t\in L^2(p_t)} \mathbb E\left[\left\|\phi_t(X^\circ_t) - \left(\dot{m}_t(Z^\circ)+\eta_t(X^\circ_t-m_t(Z^\circ))\right)\right\|^2\right].
\end{equation}

Given data $Z^{\circ(1)},\dots,Z^{\circ(n)}\sim p^\star$ and auxiliary base noises $\xi^{\circ(1)},\dots,\xi^{\circ(n)}\sim \mathcal N(0,\mathrm{Id})$, we approximate $a_t$ by empirical risk minimization over a neural network class $\mathcal V_t$:
\begin{equation}\label{eq:chapter3_empirical_regression}
\hat a_t \in \argmin_{\phi_t\in \mathcal V_t} \frac1n\sum_{i=1}^n \left\|\phi_t\bigl(m_t(Z^{\circ(i)})+\sigma_t\xi^{\circ(i)}\bigr) - \left(\dot{m}_t(Z^{\circ(i)})+\eta_t(\sigma_t\xi^{\circ(i)})\right)\right\|^2.
\end{equation}
Once the sequence $(\hat a_t)_{0\le t\le 1}$ is trained, we deploy the learned generator $\dd\hat X_t = \hat a_t(\hat X_t)\,\dd t + \sqrt{2}\,b_t\,\dd B_t$. Our final generative estimator is the terminal law $\hat p_n := \mathrm{Law}(\hat X_{T_n})$, evaluated at a deterministic early-stopping time $T_n \le 1$.

\begin{definition}[Benchmark models]\label{def:chapter3_benchmarks}
We specialize our continuous-time analysis to the two concrete interpolations:
\begin{itemize}[leftmargin=*]
    \item \textbf{Vanilla flow matching:} $X^\circ_t = t Z^\circ + (1-t)\xi^\circ$, meaning $b_t=0$.
    \item \textbf{Rescaled diffusion:} $X^\circ_t = t Z^\circ + \sqrt{1-t^2}\xi^\circ$, meaning $b_t=t^{-1/2}$.
\end{itemize}
\end{definition}

This viewpoint underscores that the statistical proof does not fundamentally distinguish ``flow matching'' from ``diffusion''. It merely distinguishes classes of drifts possessing slightly different analytic profiles for $m_t$, $\sigma_t$, and $b_t$.

\section{What the minimax problem asks for}

\subsection{The nonparametric benchmark rate}
To evaluate the statistical optimality of the learned model, we assume the target distribution $p^\star$ is supported on $[0,1]^d$ and sits in a Hölder ball measuring $\beta$-smoothness:
\[
\mathcal H_K^\beta([0,1]^d) := \left\{ f:[0,1]^d\to\mathbb R_+ \ \middle|\ \sum_{i=0}^{\lfloor \beta\rfloor}\|\nabla^i f\|_\infty + \|\nabla^{\lfloor \beta\rfloor}f\|_{\beta-\lfloor \beta\rfloor} \le K \right\}.
\]
A classic theorem of nonparametric statistics states that the minimax risk to estimate a density in this class, measured in the 1-Wasserstein distance $W_1$, converges at the rate:
\begin{equation}\label{eq:chapter3_minimax_rate}
\mathcal R_n(\mathcal H_K^\beta,W_1) := \inf_{\tilde p_n} \sup_{p^\star\in \mathcal H_K^\beta} \mathbb E\big[W_1(p^\star,\tilde p_n)\big] \asymp n^{-\frac{\beta+1}{2\beta+d}}.
\end{equation}
This exponent is both a lower bound (no estimator can converge faster uniformly on $\mathcal H_K^\beta$) and an upper bound (achievable up to polylogarithmic factors). The core statistical question is therefore: \emph{How accurately must one learn the drift $a_t$ so that the generated law $\hat p_n$ attains this optimal rate?}

\subsection{Why one-sided Lipschitz control matters: Stability}
The mechanism translating generator errors to the final distribution error was isolated via the backward Kolmogorov duality in Proposition~\ref{prop:drift_diff_lip_coupling}. It yields the following stability reduction.

\begin{proposition}[Stability reduction]\label{prop:chapter3_stability_reduction}
Let $p_t$ be the law of the exact dynamics \eqref{eq:chapter3_exact_sde}, and let $\hat p_t$ be the law of the learned dynamics, both started from $\mathcal N(0,\mathrm{Id})$.
Assume that, for every $t\in[0,1]$, the field $\hat a_t$ satisfies the one-sided Lipschitz condition:
\[
\sup_{x\in\mathbb R^d}\lambda_{\max}(\nabla \hat a_t(x))\le \hat\ell_t, \qquad \hat\ell_t\in L^1(0,1).
\]
Then the Wasserstein discrepancy satisfies:
\begin{equation}\label{eq:chapter3_stability_bound}
W_1(p_{T_n},\hat p_n) \le \int_0^{T_n} \exp\!\left(\int_t^{T_n} \hat\ell_s\,\dd s\right) \int_{\mathbb R^d}\|a_t(x)-\hat a_t(x)\|\,p_t(\dd x)\,\dd t.
\end{equation}
\end{proposition}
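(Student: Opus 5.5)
This follows by specializing Proposition~\ref{prop:drift_diff_lip_coupling} to the time interval $[0,T_n]$ instead of $[0,1]$. The generators are $L_t\phi=\langle a_t,\nabla\phi\rangle+b_t^2\Delta\phi$ for the exact dynamics \eqref{eq:chapter3_exact_sde} and $\widehat L_t\phi=\langle \hat a_t,\nabla\phi\rangle+b_t^2\Delta\phi$ for the learned one. They share the same diffusion coefficient $b_t$, which depends only on time, and the same initial law $p_0=\hat p_0=\mathcal N(0,\mathrm{Id})$. So $(p_t)$ and $(\hat p_t)$ solve the weak Kolmogorov forward equations of Proposition~\ref{prop:KFE_weak} with a common initial condition. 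Moreover, $\hat p_n=\mathrm{Law}(\hat X_{T_n})=\hat p_{T_n}$.

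First, fix a $1$-Lipschitz $f$ and solve the backward equation $\partial_t\phi_t+\widehat L_t\phi_t=0$ on $[0,T_n]$ with terminal condition $\phi_{T_n}=f$. Next, rerun the proof of Proposition~\ref{prop:duality_generator_error} with $T_n$ in place of $1$: differentiating $t\mapsto\langle p_t,\phi_t\rangle$ and $t\mapsto\langle\hat p_t,\phi_t\rangle$ and integrating over $[0,T_n]$ gives
\[
\langle p_{T_n}-\hat p_{T_n},f\rangle=\int_0^{T_n}\langle p_t,\langle a_t-\hat a_t,\nabla\phi_t\rangle\rangle\,\dd t .
\]
The Laplacian terms cancel because the diffusion coefficients coincide. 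Then apply Proposition~\ref{prop:backward_lipschitz}, again on $[t,T_n]$. Its synchronous-coupling argument shows
\[
\|\phi_t\|_{\mathrm{Lip}}\le\exp\!\Big(\int_t^{T_n}\hat\ell_s\,\dd s\Big).
\]
This uses the one-sided bound $\lambda_{\max}(\nabla\hat a_t)\le\hat\ell_t$ and the fact that the additive noise cancels in the difference of the two coupled diffusions. Bounding $|\nabla\phi_t|$ by this Lipschitz constant inside the integral, taking absolute values, and then the supremum over $1$-Lipschitz $f$ via Kantorovich--Rubinstein duality yields \eqref{eq:chapter3_stability_bound}.

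The main obstacle is justifying these tools near the singular end of the path, not the algebra. The exact drift $a_t$ involves $\eta_t=\dot\sigma_t/\sigma_t-b_t^2/\sigma_t^2$, which blows up as $\sigma_t\to0$. For the rescaled diffusion, $b_t=t^{-1/2}$ is singular at $t=0$. Working on $[0,T_n]$ with $T_n<1$ removes the first issue, since $\sigma_t$ is bounded below there. For the second, I would rely on $b_t^2$ being integrable and purely time-dependent: Proposition~\ref{prop:backward_lipschitz} needs only that the noise cancels in the coupling, and the Feynman--Kac representation needs only a well-posed SDE with integrable $b_t^2$. Two further points should be checked but are routine under the chapter's standing regularity assumptions. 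One is integrability of $\|a_t-\hat a_t\|$ against $p_t$, without which the right-hand side is simply infinite and the bound is trivial. The other is enough regularity of $\phi_t$ to differentiate the pairing $t\mapsto\langle p_t,\phi_t\rangle$; if needed, I would obtain it by smoothing $f$ and passing to the limit.
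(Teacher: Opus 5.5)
Your proposal is correct and follows the paper's route: the paper gets this bound directly from Proposition~\ref{prop:drift_diff_lip_coupling}, i.e.\ the duality identity of Proposition~\ref{prop:duality_generator_error} combined with the backward Lipschitz propagation of Proposition~\ref{prop:backward_lipschitz}, run on $[0,T_n]$ with terminal condition $f$ at $T_n$ and $\hat p_n=\hat p_{T_n}$. One correction to your side remarks on rigor, which the paper itself does not address: for the rescaled diffusion $b_t^2=t^{-1}$ is \emph{not} integrable at $0$ (since $\int_0^{T_n} t^{-1}\,\dd t=\infty$), so your Feynman--Kac/synchronous-coupling justification only holds on $[t,T_n]$ for $t>0$, and the endpoint $t=0$ would need separate care.
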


This shows that the final error is controlled by an integrated drift estimation error, scaled by an exponential weight $\exp(\int \hat{\ell}_s \dd s)$. To restrict our neural networks to a safe class $\mathcal{V}_t$ where this weight remains bounded, we absolutely need to prove that the \emph{true} drift's OSL constant $\lambda_{\max}(\nabla a_t(x))$ is itself upper-bounded by a valid integrable function $\ell_t \in L^1$. This establishes dynamic stability.

\subsection{Why localized higher-order regularity matters: The Oracle bound}
For the regression error $\int \|a_t-\hat a_t\|\dd p_t$, because $a_t$ is defined as a conditional expectation, its empirical neural approximation follows a standard bias-variance oracle inequality. 

\begin{proposition}[Fixed-time oracle inequality]\label{prop:chapter3_oracle}
Let $t\in[0,1)$. Suppose that for all $\varepsilon \in (0,1)$, there exists a "well-behaved" high-probability set $A_t^\varepsilon\subset \mathbb{R}^d$ such that $p_t(A_t^\varepsilon)\geq 1-\varepsilon$, and $$\|\phi_t\|_{L^\infty(A_t^\varepsilon)}\underset{\mathrm{polylog}(\varepsilon^{-1})}{\lesssim}(1-t)^{-1/2}
$$ for all candidate fields $\phi_t\in \{a_t\} \cup \mathcal{V}_t$. Then:
\begin{equation*}
\mathbb E\left[\int_{\mathbb R^d}\|a_t(x)-\hat a_t(x)\|^2\,p_t(\dd x)\right]
\lesssim \inf_{\phi\in\mathcal V_t} \int_{\mathbb R^d}\|a_t(x)-\phi(x)\|^2\,p_t(\dd x) + \frac{1}{n(1-t)}\log \mathcal N\bigl(\mathcal V_t,\|\cdot\|_\infty,n^{-1}\bigr).
\end{equation*}
\end{proposition}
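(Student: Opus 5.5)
The plan is the standard least-squares oracle argument for a bounded regression problem, with a truncation to the good set $A_t^\varepsilon$. Write $Y := \dot m_t(Z^\circ)+\eta_t\sigma_t\xi^\circ$ for the conditional target and $X := X^\circ_t$. By \eqref{eq:chapter3_exact_drift}, $a_t(X)=\mathbb E[Y\mid X]$. Hence for any $\phi$ the excess risk satisfies
\[
\mathbb E\|\phi(X)-Y\|^2-\mathbb E\|a_t(X)-Y\|^2=\|\phi-a_t\|_{L^2(p_t)}^2 .
\]
The loss in \eqref{eq:chapter3_empirical_regression} is therefore an empirical version of $\|\phi-a_t\|^2_{L^2(p_t)}$ plus a $\phi$-independent term. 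Let $\phi^\ast\in\mathcal V_t$ be a near-minimizer of the bias term. The basic inequality $\widehat R(\hat a_t)\le \widehat R(\phi^\ast)$ gives
\[
\|\hat a_t-a_t\|^2_{L^2(p_t)}\le \|\phi^\ast-a_t\|^2_{L^2(p_t)} + (R-\widehat R)(\ell_{\hat a_t}-\ell_{\phi^\ast}),
\]
where $\ell_\phi(x,y)=\|\phi(x)-y\|^2-\|a_t(x)-y\|^2$. The remaining task is to control this centered empirical process uniformly over $\mathcal V_t$.

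First I would reduce to bounded quantities. I take $\varepsilon=n^{-2}$ (any polynomial) and set $A:=A_t^\varepsilon$. On $A$ every candidate satisfies $\|\phi\|_\infty\lesssim (1-t)^{-1/2}$ up to $\mathrm{polylog}(n)$. The noise part $\eta_t\sigma_t\xi^\circ$ is Gaussian with scale of the same order $(1-t)^{-1/2}$ for the benchmark schedules, for instance $\eta_t\sigma_t=-1/(1-t)\cdot(1-t)$ for flow matching after rewriting. I truncate $Y$ at level $M\asymp (1-t)^{-1/2}\sqrt{\log n}$. The contributions of $A^c$ and of the Gaussian tail then cost only $O(\varepsilon\cdot\mathrm{poly}(n))$ via Cauchy–Schwarz, which is negligible against $1/n$ after a polylog. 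On the truncated event, $\ell_\phi$ is bounded by $B\asymp M^2\asymp(1-t)^{-1}$. It also satisfies the Bernstein-type variance condition
\[
\mathrm{Var}(\ell_\phi)\lesssim B\,\|\phi-a_t\|^2_{L^2(p_t)},
\]
because $\ell_\phi=(\phi-a_t)(X)\cdot(\phi+a_t-2Y)(X)$ and the second factor is bounded by $\sqrt B$.

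Next comes the uniform concentration. I take an $n^{-1}$-net of $\mathcal V_t$ in sup norm of cardinality $N=\mathcal N(\mathcal V_t,\|\cdot\|_\infty,n^{-1})$. Replacing $\hat a_t$ by its net point changes everything by $O(\sqrt B/n)$. I apply Bernstein's inequality with a union bound over the net. With probability $1-\delta$, simultaneously for all net points $\phi$,
\[
|(R-\widehat R)\ell_\phi|\le \sqrt{\frac{2B\|\phi-a_t\|^2\log(N/\delta)}{n}}+\frac{B\log(N/\delta)}{n},
\]
and the same holds for $\phi^\ast$. Plugging this into the basic inequality and using $\sqrt{uv}\le u/2+v/2$ absorbs half of $\|\hat a_t-a_t\|^2$ into the left side. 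This yields
\[
\|\hat a_t-a_t\|^2\lesssim\|\phi^\ast-a_t\|^2+\frac{\log N}{n(1-t)},
\]
again up to polylog. Integrating the tail in $\delta$, and using that the error is deterministically bounded by $\mathrm{poly}(n)$ on the complement, gives the expectation bound.

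I expect the main obstacle to be the truncation step. The hypothesis only controls $\phi$ on $A_t^\varepsilon$, and $\hat a_t$ is data-dependent, so the residual $\int_{(A_t^\varepsilon)^c}\|a_t-\hat a_t\|^2\dd p_t$ has to be controlled. I would handle it by assuming, as is implicit in the class $\mathcal V_t$, a crude global bound $\|\phi\|_\infty\le \mathrm{poly}(n)$ (for example by clipping the network outputs). I would also use that $a_t$ has Gaussian-type moments under $p_t$, since $p^\star$ is supported on $[0,1]^d$. The mass $\varepsilon=n^{-k}$ of the complement then kills these terms, and the $\mathrm{polylog}(\varepsilon^{-1})$ losses are absorbed into the $\lesssim$.
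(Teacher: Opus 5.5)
The paper gives no proof of this proposition. It is quoted as a standard bias--variance oracle inequality, and the only related computation in the text is the tail estimate in Chapter~\ref{chap:minimaxproof}, where the proposition is applied. There is therefore no argument to compare against line by line.

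Your argument is the standard proof of such a statement, and it goes through. Its steps are: the basic inequality for the ERM; restriction of the sample to $A_t^\varepsilon$ and to a Gaussian truncation level $M$; the Bernstein condition $\mathrm{Var}(\ell_\phi)\lesssim B\|\phi-a_t\|^2_{L^2(p_t)}$, obtained from $\ell_\phi=\langle\phi-a_t,\phi+a_t-2Y\rangle$; a union bound over an $n^{-1}$-net; and absorption via $\sqrt{uv}\le u/2+v/2$. Your treatment of $(A_t^\varepsilon)^c$, through a global sup-norm bound on the class plus moment bounds on $a_t$, mirrors what the paper does in Chapter~\ref{chap:minimaxproof} using $\|\phi\|_\infty\le V_k$.

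You should state explicitly that two ingredients do not follow from the stated hypothesis, which only bounds $a_t$ and the candidates on $A_t^\varepsilon$.

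\emph{First, the noise scale.} The factor $1/(1-t)$ in the estimation term requires the regression noise $Y-a_t(X)$, with $Y=\dot m_t(Z^\circ)+\eta_t\sigma_t\xi^\circ$, to have scale $\lesssim(1-t)^{-1/2}$. For a general schedule the estimation term would instead scale with $(\eta_t\sigma_t)^2$, so this must be imported from the schedules. Your justification is slightly off:
\begin{itemize}[leftmargin=*]
\item For vanilla flow matching, $\eta_t\sigma_t=-1$. This is $O(1)$, not of order $(1-t)^{-1/2}$; the slip is harmless, since only an upper bound is used.
\item For the rescaled diffusion, $|\eta_t|\sigma_t=(1+t^2)/(t\sqrt{1-t^2})$. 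This is $\asymp(1-t)^{-1/2}$ near $t=1$ but grows like $t^{-1}$ as $t\to0$. There the bound holds only with a $t$-dependent constant, consistent with the $(1+t_k^{-2})$ factor the paper carries.
\end{itemize}

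\emph{Second, the choice of $\varepsilon$.} It cannot be ``any polynomial''. Off $A_t^\varepsilon$ you only have bounds such as $\|a_t(x)\|\le(\sqrt d+\|x\|)/(1-t)$ (flow matching). The resulting contributions are:
\begin{itemize}[leftmargin=*]
\item the bad-event term, of order $n\varepsilon\,\sup_{\phi\in\mathcal V_t}\|\phi-a_t\|^2_{L^2(p_t)}$;
\item the tail integrals, of order $\varepsilon V^2+\sqrt{\varepsilon}\,\|a_t\|^2_{L^4(p_t)}$, where $V$ is the global bound on the class.
\end{itemize}
So $\varepsilon=n^{-k}$ needs $k$ large enough, depending on $V$ and on $1-t$, to push these below $1/(n(1-t))$. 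This costs only polylogarithmic factors when $1-t\ge n^{-c}$, as is the case for $t\le T_n$.
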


The first term is the \emph{approximation error}: how well can the chosen neural network class $\mathcal V_t$ approximate the true drift? The second term is the \emph{estimation error} (capacity): how much do we pay for fitting the class from $n$ samples? 

The factor $(1-t)^{-1}$ serves as a stern reminder that the problem becomes violently harder near the terminal time. As $t\uparrow 1$, the noise level $\sigma_t$ vanishes, the regression target becomes less regular, and one must refine the function class accordingly. This dictates that we need sharp mathematical bounds on the higher-order space-time regularity of $a_t$ to tightly size the network classes $\mathcal{V}_t$.

\section{Analytic bounds on the score regularity}
Guided by the required properties for $\lambda_{\max}(\nabla a_t)$ and higher-order derivatives, we establish these bounds over a mildly structured subset of $\mathcal H_K^\beta(\R^d)$ accommodating Gaussian interpolation.

\begin{assumption}[Target distribution]\label{ass:main-target}
The target density has the form $p^\star(x)=\exp\bigl(-u(x)+a(x)\bigr)$ with:
\begin{enumerate}[leftmargin=*]
    \item $u:\mathbb R^d\to\mathbb R\cup\{+\infty\}$ convex, of class $\mathcal{C}^2$ on its interior, satisfying $\nabla^2u\succeq \alpha\,\mathrm{Id}$ for some $\alpha>0$.
    \item $a\in \mathcal H_K^{\beta}$ for some $K>0$ and $\beta>0$, carrying the smooth, non-convex perturbations.
    \item $u$ satisfies a mild technical tail assumption.
\end{enumerate}
\end{assumption}
This model is broad enough to cover smooth perturbations of strongly log-concave laws and Gaussian mixtures (by re-centering the non-convex components into $a$).

\subsection{Abstract bounds and proof outline}
To prevent notational ambiguity, we reserve $\mathrm{Cov}(\cdot)$ for the $d \times d$ covariance matrix of a random vector. To evaluate $\lambda_{\max}(\nabla a_t)$, we extract the exact gradient of the drift formula~\eqref{eq:chapter3_exact_drift}. 

\begin{proposition}[Posterior covariance formula]\label{propnablav}
For every $x\in\mathbb R^d$ and $t \in [0,1)$ such that $\sigma_t>0$, the Jacobian of the drift satisfies:
\begin{equation}\label{eq:propnablav}
\nabla a_t(x) = \eta_t\,\mathrm{Id} - \frac{\eta_t}{\sigma_t^2}\,\mathrm{Cov}\bigl(m_t(Z^\circ) \mid X^\circ_t=x\bigr) + \frac{1}{\sigma_t^2}\,\mathrm{Cov}\bigl(\dot{m}_t(Z^\circ),m_t(Z^\circ) \mid X^\circ_t=x\bigr).
\end{equation}
\end{proposition}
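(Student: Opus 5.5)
The plan is to write the drift as an explicit posterior average and differentiate under the integral sign. Since $(X^\circ_t\mid Z^\circ=z)\sim\mathcal N(m_t(z),\sigma_t^2\mathrm{Id})$, Bayes' formula gives the posterior of $Z^\circ$ given $X^\circ_t=x$ as
\[
\pi_x(\dd z)=\frac{\exp\bigl(-\|x-m_t(z)\|^2/(2\sigma_t^2)\bigr)\,p^\star(\dd z)}{\int \exp\bigl(-\|x-m_t(z')\|^2/(2\sigma_t^2)\bigr)\,p^\star(\dd z')}.
\]
By \eqref{eq:chapter3_exact_drift}, the drift then splits as
\[
a_t(x)=\eta_t x+\int \bigl(\dot m_t(z)-\eta_t m_t(z)\bigr)\,\pi_x(\dd z).
\]
The term $\eta_t x$ contributes $\eta_t\,\mathrm{Id}$ to the Jacobian. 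Everything therefore reduces to computing $\nabla_x$ of a posterior mean $\int g(z)\,\pi_x(\dd z)$ for the vector function $g=\dot m_t-\eta_t m_t$.

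The key step is the standard exponential-family identity for the derivative of a posterior mean. The $x$-dependence of the unnormalized weight is $\exp\bigl(\langle x,m_t(z)\rangle/\sigma_t^2-\|x\|^2/(2\sigma_t^2)-\|m_t(z)\|^2/(2\sigma_t^2)\bigr)$. The factor $\exp(-\|x\|^2/(2\sigma_t^2))$ does not depend on $z$, so it cancels in the normalization. Hence $\pi_x$ is an exponential tilt of a fixed measure with natural parameter $x/\sigma_t^2$ and sufficient statistic $m_t(z)$. Differentiating the numerator and the normalizing constant with the quotient rule gives
\[
\nabla_x\,\mathbb E[g(Z^\circ)\mid X^\circ_t=x]=\frac{1}{\sigma_t^2}\,\mathrm{Cov}\bigl(g(Z^\circ),m_t(Z^\circ)\mid X^\circ_t=x\bigr).
\]
Here $\mathrm{Cov}(U,V)=\mathbb E[UV^\top]-\mathbb E[U]\mathbb E[V]^\top$, so the rows are indexed by the components of $g$, matching the Jacobian convention $(\nabla a)_{ij}=\partial_j a_i$.

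Applying this with $g=\dot m_t-\eta_t m_t$ and using bilinearity of the covariance gives
\[
\frac{1}{\sigma_t^2}\mathrm{Cov}(\dot m_t,m_t\mid x)-\frac{\eta_t}{\sigma_t^2}\mathrm{Cov}(m_t,m_t\mid x).
\]
Since $\mathrm{Cov}(m_t,m_t\mid x)=\mathrm{Cov}(m_t(Z^\circ)\mid X^\circ_t=x)$, adding $\eta_t\,\mathrm{Id}$ yields exactly \eqref{eq:propnablav}.

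The only genuine obstacle is justifying differentiation under the integral sign. This needs integrability of $\|g(z)\|\,\|m_t(z)\|\exp(\langle x,m_t(z)\rangle/\sigma_t^2-\|m_t(z)\|^2/(2\sigma_t^2))$, locally uniformly in $x$. The Gaussian factor $\exp(-\|m_t(z)\|^2/(2\sigma_t^2))$ dominates the linear exponential, since $\langle x,m\rangle-\|m\|^2/2\le\|x\|^2/2$. It is therefore enough to assume that $\dot m_t(Z^\circ)$ and $m_t(Z^\circ)$ have finite second moments under $p^\star$. This is immediate in the benchmark cases, where $m_t(z)=tz$ and $p^\star$ has compact support or Gaussian-type tails under Assumption~\ref{ass:main-target}. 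In those cases one also has $\dot m_t(Z^\circ)=Z^\circ$, which makes the last covariance term explicit.
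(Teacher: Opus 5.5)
Your proposal is correct and follows essentially the paper's proof. You use the same split $a_t(x)=\eta_t x+\mathbb E[\dot m_t(Z^\circ)-\eta_t m_t(Z^\circ)\mid X^\circ_t=x]$ and the same key identity $\nabla_x\mathbb E[g(Z^\circ)\mid X^\circ_t=x]=\sigma_t^{-2}\,\mathrm{Cov}(g(Z^\circ),m_t(Z^\circ)\mid X^\circ_t=x)$. You obtain that identity from the quotient rule on the exponential tilt, whereas the paper differentiates the log-posterior and substitutes $\nabla\log p_t(x)=-\sigma_t^{-2}\,\mathbb E[x-m_t(Z^\circ)\mid X^\circ_t=x]$; this is the same computation in different packaging. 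Your domination argument for differentiating under the integral sign is a useful addition that the paper leaves implicit.
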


\begin{proof}
The exact drift ensures that $X_t$ shares the marginals of $X^\circ_t$. By definition, the conditional expectation is:
\[ a_t(x) = \mathbb E\bigl[\dot{m}_t(Z^\circ) + \eta_t(x-m_t(Z^\circ))\mid X^\circ_t=x\bigr]. \]
To compute the Jacobian $\nabla_x a_t(x)$, we must differentiate this expectation. Note that $X^\circ_t = m_t(Z^\circ) + \sigma_t \xi^\circ$, so the conditional density of $Z^\circ$ given $X^\circ_t=x$ is given by Bayes' rule:
\[ p_{t,x}(z) = \frac{p^\star(z) \varphi_{\sigma_t}(x-m_t(z))}{p_t(x)}, \]
where $\varphi_{\sigma_t}$ is the centered Gaussian density with variance $\sigma_t^2\mathrm{Id}$. 
Taking the spatial gradient $\nabla_x$ of the log-posterior yields:
\[ \nabla_x \log p_{t,x}(z) = -\frac{x-m_t(z)}{\sigma_t^2} - \nabla_x \log p_t(x). \]
Since $\nabla_x \log p_t(x) = \mathbb E\left[-\frac{x-m_t(Z^\circ)}{\sigma_t^2} \mid X^\circ_t=x\right]$, we can rewrite this as:
\[ \nabla_x \log p_{t,x}(z) = \frac{m_t(z) - \mathbb E[m_t(Z^\circ)\mid X^\circ_t=x]}{\sigma_t^2}. \]
Using the identity $\nabla_x p_{t,x}(z) = p_{t,x}(z) \nabla_x \log p_{t,x}(z)$, we can differentiate any conditional expectation $\mathbb E[f(Z^\circ)\mid X^\circ_t=x] = \int f(z) p_{t,x}(z) \dd z$ with respect to $x$:
\[ \nabla_x \mathbb E[f(Z^\circ)\mid X^\circ_t=x] = \int f(z) \nabla_x p_{t,x}(z)^\top \dd z = \frac{1}{\sigma_t^2} \mathrm{Cov}\bigl(f(Z^\circ), m_t(Z^\circ) \mid X^\circ_t=x\bigr). \]
We now apply this to our drift $a_t(x) = \eta_t x + \mathbb E\bigl[\dot{m}_t(Z^\circ) - \eta_t m_t(Z^\circ)\mid X^\circ_t=x\bigr]$. The linear term $\eta_t x$ differentiates to $\eta_t \mathrm{Id}$, and applying the covariance identity to $f(z) = \dot{m}_t(z) - \eta_t m_t(z)$ yields:
\[ \nabla_x a_t(x) = \eta_t\,\mathrm{Id} + \frac{1}{\sigma_t^2}\mathrm{Cov}\bigl(\dot{m}_t(Z^\circ),m_t(Z^\circ) \mid X^\circ_t=x\bigr) - \frac{\eta_t}{\sigma_t^2}\mathrm{Cov}\bigl(m_t(Z^\circ) \mid X^\circ_t=x\bigr), \]
which is precisely the stated formula.
\end{proof}

Let $\pi_{t,x}(\dd z)$ denote this exact posterior law of $Z^\circ$ given $X^\circ_t=x$, and let $p_{t,x}$ denote the posterior law of the corresponding Gaussian centers $m_t(Z^\circ)$. Let $W_{t,x} \sim p_{t,x}$.
We enforce two structural assumptions on the path. 

\begin{assumption}[Weak log-concavity and time derivative]\label{assum:lambda-path-conditions}
For each $t\in[0,1]$, the density $(m_t)_\# p^\star$ can be written as $\exp(-u_t+a_t)$ with $u_t$ being $\alpha_t$-strongly convex and $a_t\in \mathcal H_{K_t}^{\beta}$. Furthermore, there exists $L_t\ge 0$ such that for all $x\in\mathbb R^d$ and $h\in\mathbb S^{d-1}$, the scalar conditional variance satisfies:
\[
\mathrm{Var}\big(h^\top \dot{m}_t(Z^\circ) \mid X^\circ_t=x\big)
\le L_t^2\frac{\sigma_t^2}{1+\alpha_t\sigma_t^2}.
\]
\end{assumption}

The scale $(\alpha_t+\sigma_t^{-2})^{-1} = \frac{\sigma_t^2}{1+\alpha_t\sigma_t^2}$ represents the natural posterior fluctuation size of the centers. We define a purely log-concave proxy posterior measure $\tilde{p}_{t,x}(\dd w) \propto e^{-u_t(w)}\varphi^{w,\sigma_t}(x) \dd w$ driven exclusively by the convex base $u_t$. Let $\tilde{W}_{t,x} \sim \tilde{p}_{t,x}$. Since $\tilde{p}_{t,x}$ is $(\alpha_t+\sigma_t^{-2})$-strongly log-concave, we can dissect the covariance formula via five rigorous steps:

\begin{lemma}[Log-concave core]\label{lemma:boundeasy}
Replacing the exact posterior $W_{t,x}$ with the log-concave proxy $\tilde{W}_{t,x}$ isolates the purely dissipative part of the dynamic. For $\eta_t \le 0$:
\[ \lambda_{\max}\!\left(\eta_t\,\mathrm{Id}-\frac{\eta_t}{\sigma_t^2}\mathrm{Cov}(\tilde{W}_{t,x})\right) \le \frac{\eta_t\alpha_t\sigma_t^2}{1+\alpha_t\sigma_t^2}. \]
\end{lemma}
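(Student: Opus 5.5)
We want $\lambda_{\max}(\eta_t\mathrm{Id}-\frac{\eta_t}{\sigma_t^2}\mathrm{Cov}(\tilde W_{t,x}))$. Since $\eta_t\le 0$, write $-\eta_t=|\eta_t|\ge 0$. The matrix equals $\eta_t\bigl(\mathrm{Id}-\sigma_t^{-2}\mathrm{Cov}(\tilde W_{t,x})\bigr) = |\eta_t|\bigl(\sigma_t^{-2}\mathrm{Cov}(\tilde W_{t,x})-\mathrm{Id}\bigr)$. Because the map $M\mapsto |\eta_t|(\sigma_t^{-2}M-\mathrm{Id})$ is monotone for the Loewner order, its largest eigenvalue is controlled by an upper bound on $\mathrm{Cov}(\tilde W_{t,x})$ in Loewner order. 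So the plan reduces to a single covariance estimate for the proxy posterior.

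The key step is the Brascamp--Lieb inequality (equivalently, the Poincaré inequality with sharp constant for strongly log-concave measures). The density of $\tilde p_{t,x}$ is proportional to $\exp\bigl(-u_t(w)-\|x-w\|^2/(2\sigma_t^2)\bigr)$. Its potential $V(w)=u_t(w)+\|x-w\|^2/(2\sigma_t^2)$ satisfies $\nabla^2V\succeq(\alpha_t+\sigma_t^{-2})\mathrm{Id}$ by Assumption~\ref{assum:lambda-path-conditions}. Brascamp--Lieb applied to the linear test functions $w\mapsto h^\top w$ gives $\mathrm{Var}(h^\top\tilde W_{t,x})\le h^\top(\nabla^2V)^{-1}h$ averaged, hence
\[
\mathrm{Cov}(\tilde W_{t,x})\preceq (\alpha_t+\sigma_t^{-2})^{-1}\mathrm{Id}=\frac{\sigma_t^2}{1+\alpha_t\sigma_t^2}\mathrm{Id}.
\]
If $u_t$ takes value $+\infty$ outside a convex domain, this still holds: either approximate $u_t$ by smooth strongly convex functions and pass to the limit in the covariance, or invoke the Brascamp--Lieb inequality for log-concave measures on convex sets. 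This technicality is the only real obstacle. The computation itself is immediate, and the "mild tail assumption" of Assumption~\ref{ass:main-target} ensures the moments are finite.

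Plugging in, for a unit vector $h$,
\[
h^\top\Bigl(\eta_t\mathrm{Id}-\tfrac{\eta_t}{\sigma_t^2}\mathrm{Cov}(\tilde W_{t,x})\Bigr)h\le |\eta_t|\Bigl(\tfrac{1}{1+\alpha_t\sigma_t^2}-1\Bigr)=\eta_t\frac{\alpha_t\sigma_t^2}{1+\alpha_t\sigma_t^2}.
\]
Taking the supremum over $h$ gives the claim. This bound is nonpositive, which reflects the dissipative nature of the log-concave core.
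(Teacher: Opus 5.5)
Your proposal is correct and matches the paper's proof: Brascamp--Lieb applied to the $(\alpha_t+\sigma_t^{-2})$-strongly log-concave proxy gives $\mathrm{Cov}(\tilde W_{t,x})\preceq \frac{\sigma_t^2}{1+\alpha_t\sigma_t^2}\mathrm{Id}$, and since $-\eta_t/\sigma_t^2\ge 0$ you can substitute this bound directly. Your remark on handling $u_t=+\infty$ outside a convex domain adds a small piece of rigor that the paper omits.
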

\begin{proof}
Because $\tilde{p}_{t,x}$ is strongly log-concave, the Brascamp-Lieb inequality upper-bounds the eigenvalues of its covariance matrix: $\lambda_{\max}(\mathrm{Cov}(\tilde{W}_{t,x}))\le (\alpha_t+\sigma_t^{-2})^{-1} = \frac{\sigma_t^2}{1+\alpha_t\sigma_t^2}$. Since $\eta_t \le 0$, $-\frac{\eta_t}{\sigma_t^2}$ is positive. Substituting the bound pulls the eigenvalues negatively, providing the exact stated estimate.
\end{proof}

\begin{proposition}[Covariance difference under exponential tilt]\label{prop:cov-diff-tilt}
Let $\mu_1,\mu_2\in \mathcal{P}_2(\mathbb{R}^d)$ be two probability measures related by an exponential tilt $\frac{\dd \mu_1(y)}{\dd \mu_2(y)} \propto e^{f(y)}$. Let $W_1 \sim \mu_1$ and $W_2 \sim \mu_2$. Then, for all $h\in \mathbb{S}^{d-1}$, the scalar variances satisfy:
\begin{align*}
    \left|\mathrm{Var}\big(h^\top W_1\big) - \mathrm{Var}\big(h^\top W_2\big)\right|
\leq {} &C\left|\int  h^\top \Bigl(y-\E[W_2]\Bigr)\left(e^{f(y)}-1\right)\mu_2(\dd y)\right|^2 \\
&+\left|\int  \Bigl(h^\top \Bigl(y-\E[W_2]\Bigr)\Bigr)^2\left(e^{f(y)}-1\right)\mu_2(\dd y)\right|.
\end{align*}
\end{proposition}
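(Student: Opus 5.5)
The plan is a direct computation with no earlier results needed. First I fix the normalization of the tilt. Since $\frac{\dd\mu_1}{\dd\mu_2}\propto e^{f}$ only determines $f$ up to an additive constant, I will add a constant to $f$ so that $\int e^{f}\,\dd\mu_2=1$. This is the only normalization under which the right-hand side, written with $e^{f}-1$, is meaningful, and I read the statement with it. Then $\dd\mu_1=e^{f}\,\dd\mu_2$ exactly.

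Set $m_2:=\E[W_2]$ and $g(y):=h^\top(y-m_2)$. The main tool is the shifted-variance identity: for any constant $c$,
\[
\mathrm{Var}(h^\top W)=\E\big[(h^\top W-c)^2\big]-\big(\E[h^\top W]-c\big)^2 .
\]
I will apply it to both $W_1$ and $W_2$ with the common centre $c=h^\top m_2$.

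For $W_2$ the correction term vanishes, so $\mathrm{Var}(h^\top W_2)=\int g^2\,\dd\mu_2$. For $W_1$ I get
\[
\mathrm{Var}(h^\top W_1)=\int g^2 e^{f}\,\dd\mu_2-\Big(\int g\,e^{f}\,\dd\mu_2\Big)^2 .
\]
Because $\int g\,\dd\mu_2=0$ by the definition of $m_2$, I can replace $e^{f}$ by $e^{f}-1$ inside the squared mean, giving $\int g\,e^{f}\,\dd\mu_2=\int g\,(e^{f}-1)\,\dd\mu_2$.

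Subtracting the two expressions gives the exact identity
\[
\mathrm{Var}(h^\top W_1)-\mathrm{Var}(h^\top W_2)=\int g^2\,(e^{f}-1)\,\dd\mu_2-\Big(\int g\,(e^{f}-1)\,\dd\mu_2\Big)^2 .
\]
The triangle inequality then yields the claim with $C=1$.

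The main technical point is integrability. The quantities $\int g^2 e^{f}\,\dd\mu_2=\E[(h^\top(W_1-m_2))^2]$ and $\int g^2\,\dd\mu_2$ are finite because $\mu_1,\mu_2\in\mathcal P_2(\mathbb R^d)$. The first-moment terms are finite by Cauchy--Schwarz. So every manipulation above, including splitting the integrals of $e^{f}-1$, is legitimate. I expect no real obstacle beyond stating the normalization convention explicitly.
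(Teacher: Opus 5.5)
Your proof is correct and follows the same route as the paper: center both variances at $h^\top\E[W_2]$, split $e^{f}=(e^{f}-1)+1$, and read off an exact identity whose terms are all integrals against $e^{f}-1$. The only difference is that you fix $\int e^{f}\,\dd\mu_2=1$, which is the convention the paper itself uses later in Lemma~\ref{lemma:moments-tilt} and Lemma~\ref{prop:finalestimatecompcov}; this sets the paper's normalizer $Z_f$ to $1$ and removes its leftover term $(1/Z_f-1)\,\mathrm{Var}(h^\top W_2)$ --- a term absent from the stated right-hand side, which the paper dismisses by appealing to strong log-concavity that is not among the hypotheses --- so your argument genuinely delivers the bound with $C=1$, and you are right that the normalization is needed, since taking $\mu_2$ uniform on $\{-1,0,1\}$ with $e^{f}=1+\mathbf{1}_{\{0\}}$ makes the right-hand side vanish while the two variances are $2/3$ and $1/2$.
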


\begin{proof}
Let $Z_f = \int e^{f(y)} \mu_2(\dd y)$ be the normalization constant, so that $\frac{\dd \mu_1(y)}{\dd \mu_2(y)} = \frac{e^{f(y)}}{Z_f}$. We can cleverly expand this density ratio as $\frac{e^{f(y)}}{Z_f} = \frac{e^{f(y)} - 1}{Z_f} + \frac{1}{Z_f}$. 
Notice that integrating this expansion over $\mu_2$ gives $1 = \int \frac{e^{f(y)}-1}{Z_f}\mu_2(\dd y) + \frac{1}{Z_f}$, which implies $\left(\frac{1}{Z_f} - 1\right) = -\frac{1}{Z_f}\int(e^{f(y)}-1)\mu_2(\dd y)$.

For any linear observable $g(y) = h^\top y$, let $\bar{g} = \E[g(W_2)]$. We write the variance difference as:
\begin{align*}
\mathrm{Var}(g(W_1)) - \mathrm{Var}(g(W_2)) 
&= \E[(g(W_1) - \bar{g})^2] - \big(\E[g(W_1)] - \bar{g}\big)^2 - \E[(g(W_2) - \bar{g})^2].
\end{align*}
Using our density ratio expansion, we can express the moments of $W_1$ as integrals over $\mu_2$:
\begin{align*}
\E[(g(W_1) - \bar{g})^j] &= \frac{1}{Z_f} \int (g(y) - \bar{g})^j e^{f(y)} \mu_2(\dd y) \\
&= \frac{1}{Z_f} \int (g(y) - \bar{g})^j (e^{f(y)} - 1) \mu_2(\dd y) + \frac{1}{Z_f} \E[(g(W_2) - \bar{g})^j].
\end{align*}
For $j=1$, the base expectation is $\E[g(W_2)-\bar{g}]=0$, leaving $\E[g(W_1)] - \bar{g} = \frac{1}{Z_f} \int (g(y) - \bar{g}) (e^{f(y)} - 1) \mu_2(\dd y)$.
For $j=2$, we have $\E[(g(W_1) - \bar{g})^2] = \frac{1}{Z_f} \int (g(y) - \bar{g})^2 (e^{f(y)} - 1) \mu_2(\dd y) + \frac{1}{Z_f} \mathrm{Var}(g(W_2))$.
Substituting these back into the variance difference yields:
\begin{align*}
\mathrm{Var}(g(W_1)) - \mathrm{Var}(g(W_2)) 
&= \frac{1}{Z_f} \int (g(y) - \bar{g})^2 (e^{f(y)} - 1) \mu_2(\dd y) + \left(\frac{1}{Z_f}-1\right) \mathrm{Var}(g(W_2)) \\
&\quad - \left(\frac{1}{Z_f} \int (g(y) - \bar{g}) (e^{f(y)} - 1) \mu_2(\dd y)\right)^2.
\end{align*}
Because $\left(\frac{1}{Z_f}-1\right) = -\frac{1}{Z_f} \int (e^{f(y)}-1)\mu_2(\dd y)$, every single discrepancy term is strictly an integral against $(e^{f(y)}-1)$. Bounding the normalizer $1/Z_f$ using the strong log-concavity produces the stated estimate.
\end{proof}

\begin{lemma}[Moments of the tilt]\label{lemma:moments-tilt}
Let $\mu\in \mathcal{P}(\mathbb{R}^d)$ be $\gamma_t$-strongly log-concave. Let $f\in \mathcal{H}^\beta$ with $\|f\|_{\mathcal{H}^\beta}\leq K_t$, $\int e^f \dd \mu=1$, and $K_t^2 \gamma_t^{-\beta}\leq C$. Let $W \sim \mu$. For $h\in \mathbb{S}^{d-1}$ and $j\in \{1,2\}$:
\[
\Big|\int \Bigl(h^\top \big(y-\E[W]\big)\Bigr)^{j} (e^{f(y)}-1) \mu(\dd y)\Big| \le C\, \gamma_t^{-j/2}\min\bigl(1,K_t\gamma_t^{-\beta/2}\bigr).
\]
\end{lemma}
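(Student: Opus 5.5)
The plan is to treat the two regimes of the minimum separately. In the regime where $K_t\gamma_t^{-\beta/2}\gtrsim 1$ we only need the trivial bound. In the regime where $K_t\gamma_t^{-\beta/2}$ is small we exploit the fact that $e^{f}-1$ is small on the effective support of $\mu$. In both regimes the common tool is concentration for the strongly log-concave measure $\mu$. Write $Y\sim\mu$, $\bar y=\E[W]$, and $g(y)=h^\top(y-\bar y)$. By Brascamp--Lieb (as already used in Lemma~\ref{lemma:boundeasy}), $\mathrm{Var}(g(Y))\le\gamma_t^{-1}$. More strongly, $g(Y)$ is sub-Gaussian with variance proxy $\gamma_t^{-1}$, by Bakry--\'Emery/log-Sobolev and Herbst. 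Hence $\E|g(Y)|^{q}\lesssim_q\gamma_t^{-q/2}$ for every $q$.

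\textbf{Trivial regime.} Since $\int e^f\dd\mu=1$, the tilted measure $e^f\mu$ is a probability. We bound
\[
\Big|\int g^j(e^f-1)\dd\mu\Big|\le \int |g|^j e^f\dd\mu+\int|g|^j\dd\mu .
\]
The second term is $\lesssim\gamma_t^{-j/2}$. For the first, $e^f\mu$ is itself log-concave up to a bounded perturbation in the relevant sense. More simply, $\|f\|_\infty\le K_t$, so $e^{f}\le e^{2K_t}$ after normalization. This is not uniform in $K_t$, however, so the cleaner route is different: note that $f$ is a $\mathcal H^\beta$ perturbation, and use Cauchy--Schwarz, $\int|g|^je^f\dd\mu\le(\E g^{2j})^{1/2}(\int e^{2f}\dd\mu)^{1/2}$. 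Then control $\int e^{2f}\dd\mu$ using that $f$ is essentially constant at scale $\gamma_t^{-1/2}$ when $K_t^2\gamma_t^{-\beta}\le C$. That hypothesis is exactly what keeps the oscillation of $f$ over the bulk of $\mu$ bounded, giving $C\gamma_t^{-j/2}$.

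\textbf{Small-perturbation regime.} Let $f_0:=f(\bar y)$. The normalization $\int e^f\dd\mu=1$ pins $|f_0|$ down, since it forces $\int(e^{f-f_0}-1)\dd\mu=e^{-f_0}-1$. We then expand $f(y)=f_0+(f(y)-f(\bar y))$, with $|f(y)-f(\bar y)|\le K_t\,\omega(|y-\bar y|)$. Here $\omega(r)=\min(r^{\min(\beta,1)},1)$ suitably; for $\beta>1$ we also Taylor expand, using Hölder control of higher derivatives. We write $e^f-1=(e^{f_0}-1)e^{f-f_0}+(e^{f-f_0}-1)$ and bound $|e^{f-f_0}-1|\le |f-f_0|e^{|f-f_0|}$. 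Integrating against $|g|^j$ then gives the moment terms $\E[|g|^j\,|Y-\bar y|^{s}e^{K_t|Y-\bar y|^s}]$. By the sub-Gaussian tails, and since $K_t\gamma_t^{-s/2}$ is small, these are $\lesssim\gamma_t^{-j/2}\cdot K_t\gamma_t^{-\beta/2}$ when $s=\beta\le1$. The $(e^{f_0}-1)$ piece is handled by the normalization identity and is of the same order, since $\int g\,\dd\mu=0$ handles the leading term for $j=1$.

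\textbf{Main obstacle.} The hard step is the case $\beta>1$. There, first-order (and higher) terms of the Taylor expansion, like $\nabla f(\bar y)^\top(y-\bar y)$, contribute only $\|\nabla f\|\gamma_t^{-1/2}$ rather than $\gamma_t^{-\beta/2}$. To get the full $\gamma_t^{-\beta/2}$ gain I would try to absorb the polynomial part of the Taylor expansion of $f$ into the log-concave reference, i.e.\ compare against $\mu$ tilted by the Taylor polynomial. If that fails, I would settle for $\min(1,K_t\gamma_t^{-\min(\beta,1)/2})$ and invoke the structure of the perturbation (the Hölder norm scaled as $K_t$ in Assumption~\ref{assum:lambda-path-conditions}) to recover the stated exponent.
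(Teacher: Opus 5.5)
\textbf{The gap is the case $\beta>1$, and it cannot be closed.} The obstacle you flag is a counterexample to the statement, not a technical difficulty. Take $d=1$ and $\mu=\mathcal N(0,\gamma^{-1})$, so that $\E[W]=0$. Let $f(y)=\epsilon\sin y-c_\epsilon$ with $c_\epsilon=\log\int e^{\epsilon\sin y}\mu(\dd y)$. All hypotheses hold with $K_t=C_\beta\epsilon$. Gaussian integration by parts then gives
\[
\int y\,\bigl(e^{f(y)}-1\bigr)\,\mu(\dd y)=\frac{\epsilon\, e^{-c_\epsilon}}{\gamma}\int \cos(y)\,e^{\epsilon\sin y}\,\mu(\dd y)\sim\frac{\epsilon}{\gamma}\qquad(\gamma\to\infty).
\]
The claimed right-hand side for $j=1$ is at most $C C_\beta\,\epsilon\,\gamma^{-(1+\beta)/2}$, which is $o(\epsilon\gamma^{-1})$ when $\beta>1$. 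This is exactly your first-order term $\nabla f(\bar y)^\top(y-\bar y)$: integrated against $g$, it produces $\nabla f(\bar y)^\top\mathrm{Cov}(Y)h$, of size $\|\nabla f\|\gamma_t^{-1}$.

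Neither of your remedies can help. Tilting the reference by the Taylor polynomial changes the measure, so it bounds a different integral from the one in the statement. Appealing to the structure of $K_t$ cannot restore $\gamma_t^{-\beta/2}$ either, since the example satisfies every hypothesis. Your trivial regime fails for the same reason. When $\beta>1$, the oscillation of $f$ over the bulk of $\mu$ is of order $\|\nabla f\|_\infty\gamma_t^{-1/2}$, and $K_t^2\gamma_t^{-\beta}\le C$ does not control this once $\gamma_t>1$. For instance, take $f=K\sin y-c$ with $K=\gamma_t^{\beta/2}$ and $1<\beta<2$: the tilt moves the mean by about $K\gamma_t^{-1}\gg\gamma_t^{-1/2}$, so even the bound $C\gamma_t^{-1/2}$ fails.

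\textbf{The true statement replaces $\beta$ by $\bar\beta=\beta\wedge1$ in both the hypothesis and the conclusion.} This is what the paper actually uses. Its sketch rests on ``$\beta$-Hölder continuity'' of $f$, which only makes sense for exponents at most $1$, and Lemma~\ref{prop:finalestimatecompcov} and \eqref{eq:abstract-negative-etat} are stated with $\bar\beta$.

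For that version your argument is correct and is the paper's route made explicit. Its ingredients are:
\begin{itemize}
\item sub-Gaussian concentration at scale $\gamma_t^{-1/2}$ for the $\gamma_t$-strongly log-concave $\mu$;
\item the oscillation bound $|f(y)-f(\bar y)|\le 2K_t\min(1,|y-\bar y|^{\bar\beta})$ (for $\beta>1$, simply use that $f$ is bounded by $K_t$ and $K_t$-Lipschitz);
\item your decomposition $e^f-1=(e^{f_0}-1)e^{f-f_0}+(e^{f-f_0}-1)$, combined with $e^{-f_0}-1=\int(e^{f-f_0}-1)\dd\mu$.
\end{itemize}

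The two-regime split is unnecessary. The hypothesis already gives $K_t\gamma_t^{-\bar\beta/2}\le\sqrt C$, so the minimum is comparable to $K_t\gamma_t^{-\bar\beta/2}$. The expansion argument only needs this quantity bounded, to control $\E\exp(cK_t|Y-\bar y|^{\bar\beta})$, not small. So your fallback is the correct statement: present it as the result rather than trying to recover the exponent $\beta/2$.
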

\begin{proof}
Strong log-concavity provides a generalized Poincaré inequality, which tightly controls exponential moments. The perturbation $f$ is $\beta$-Hölder continuous, bounding its spatial variations. Combining these facts yields the polynomial bound $\gamma_t^{-j/2}$ adjusted by the Hölder factor.
\end{proof}

\begin{lemma}[Perturbative covariance comparison]\label{prop:finalestimatecompcov}
For any $h\in\mathbb S^{d-1}$ and $\delta \in \{0,1\}$, comparing the exact posterior $p_{t,x}$ to the proxy $\tilde{p}_{t,x}$ yields:
\[
\big|h^\top\big(\mathrm{Cov}(W_{t,x})-\mathrm{Cov}(\tilde{W}_{t,x})\big)h\big|
\le C K_t^{\delta}(\alpha_t+\sigma_t^{-2})^{-(2+\bar\beta\delta)/2} \exp\!\Big(CK_t^2(\alpha_t+\sigma_t^{-2})^{-\bar\beta}\Big).
\]
\end{lemma}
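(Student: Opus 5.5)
The plan is to recognise that the exact posterior $p_{t,x}$ of the centers is an exponential tilt of the log-concave proxy $\tilde p_{t,x}$. By Bayes' rule and Assumption~\ref{assum:lambda-path-conditions},
\[
p_{t,x}(\dd w)\propto e^{-u_t(w)+a_t(w)}\varphi^{w,\sigma_t}(x)\,\dd w ,
\qquad
\tilde p_{t,x}(\dd w)\propto e^{-u_t(w)}\varphi^{w,\sigma_t}(x)\,\dd w ,
\]
so that $\dd p_{t,x}/\dd\tilde p_{t,x}\propto e^{a_t(w)}$. I would renormalise the exponent to $f:=a_t-\log\int e^{a_t}\dd\tilde p_{t,x}$, so that $\int e^f\dd\tilde p_{t,x}=1$. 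This subtracts only a constant, hence $\|f\|_{\mathcal H^\beta}$ (seminorm part) is still controlled by $K_t$, and the sup-norm part by $2K_t$. The proxy $\tilde p_{t,x}$ is $\gamma_t$-strongly log-concave with $\gamma_t:=\alpha_t+\sigma_t^{-2}$, since $u_t$ is $\alpha_t$-convex and the Gaussian factor contributes $\sigma_t^{-2}$. We are then exactly in the setting of Proposition~\ref{prop:cov-diff-tilt} with $\mu_1=p_{t,x}$ and $\mu_2=\tilde p_{t,x}$.

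Next I apply Proposition~\ref{prop:cov-diff-tilt} with a fixed $h\in\mathbb S^{d-1}$. It bounds $|h^\top(\mathrm{Cov}(W_{t,x})-\mathrm{Cov}(\tilde W_{t,x}))h|$ by
\[
C\,|I_1|^2+|I_2|,
\qquad
I_j:=\int\bigl(h^\top(y-\E\tilde W_{t,x})\bigr)^j\,(e^{f(y)}-1)\,\tilde p_{t,x}(\dd y).
\]
The constant in front depends on $1/Z_f$; with the normalisation above $Z_f=1$, so no extra factor appears. Lemma~\ref{lemma:moments-tilt} then gives
\[
|I_j|\le C\gamma_t^{-j/2}\min\bigl(1,K_t\gamma_t^{-\bar\beta/2}\bigr),
\]
where $\bar\beta:=\min(\beta,1)$ if only the first-order Hölder increment is used. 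Consequently $|I_1|^2\lesssim\gamma_t^{-1}\min(1,\cdot)^2\le\gamma_t^{-1}\min(1,\cdot)$ and $|I_2|\lesssim\gamma_t^{-1}\min(1,\cdot)$.

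The two values of $\delta$ correspond to the two branches of the min. Taking the branch $1$ gives $\delta=0$ and the bound $C\gamma_t^{-1}$. Taking the branch $K_t\gamma_t^{-\bar\beta/2}$ gives $\delta=1$ and the bound $CK_t\gamma_t^{-(2+\bar\beta)/2}$. Both match the claimed exponent $-(2+\bar\beta\delta)/2$.

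The main obstacle is the exponential factor $\exp(CK_t^2\gamma_t^{-\bar\beta})$, because Lemma~\ref{lemma:moments-tilt} was stated under the smallness condition $K_t^2\gamma_t^{-\beta}\le C$, which we cannot assume here. I would therefore reprove the moment bound without that condition, making the constant explicit.

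First, write $e^{f}-1=(e^{f}-1)$ and bound $|e^{f(y)}-1|\le|f(y)|e^{|f(y)|}$. Since $\int e^f\dd\tilde p_{t,x}=1$, $f$ must vanish somewhere, or at least be close to its mean. Hölder continuity then gives
\[
|f(y)|\le K_t\bigl(\|y-\E\tilde W_{t,x}\|^{\bar\beta}+\gamma_t^{-\bar\beta/2}\bigr).
\]
Second, apply Cauchy--Schwarz and split the integrand into $|h^\top(y-\E\tilde W)|^{2j}|f|^2$ and $e^{2|f|}$. The polynomial moments of a $\gamma_t$-strongly log-concave law scale like $\gamma_t^{-1/2}$ per power, by Brascamp--Lieb or Gaussian concentration. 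The exponential moment $\E\exp(2K_t\|Y\|^{\bar\beta})$ is bounded via sub-Gaussian concentration and Young's inequality,
\[
2K_t r^{\bar\beta}\le \tfrac{\gamma_t}{4}r^2+CK_t^{2/(2-\bar\beta)}\gamma_t^{-\bar\beta/(2-\bar\beta)},
\]
and for $\bar\beta\le1$ this is dominated by $CK_t^2\gamma_t^{-\bar\beta}$ in the regime of interest. That is precisely where the factor $\exp(CK_t^2\gamma_t^{-\bar\beta})$ arises. Combining these estimates with the previous step yields the statement for both values of $\delta$.
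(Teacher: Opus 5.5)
Your plan follows the paper's proof. You write $p_{t,x}$ as the tilt of the $\gamma_t$-strongly log-concave proxy $\tilde p_{t,x}$ by the normalized $f=a_t-\log\int e^{a_t}\dd\tilde p_{t,x}$. You apply Proposition~\ref{prop:cov-diff-tilt} with $\mu_1=p_{t,x}$, $\mu_2=\tilde p_{t,x}$ (so $Z_f=1$), and control $I_1,I_2$ as in Lemma~\ref{lemma:moments-tilt}. The paper attributes the envelope $\exp(CK_t^2\gamma_t^{-\bar\beta})$ to the normalization of the tilt, and your exponential-moment computation makes that explicit. However, one intermediate step is false as written.

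Set $s:=K_t\gamma_t^{-\bar\beta/2}$ and $\bar y:=\E\tilde W_{t,x}$. The bound $|f(y)|\le K_t(\|y-\bar y\|^{\bar\beta}+\gamma_t^{-\bar\beta/2})$ does not follow from ``$f$ vanishes somewhere'': nothing places that zero within $O(\gamma_t^{-1/2})$ of $\bar y$, and the bound fails when $s$ is large.

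\emph{Counterexample.} Take $\tilde p_{t,x}=\mathcal N(0,\gamma^{-1})$ on $\R$ and the linear tilt $a(y)=Ky$ (so $\bar\beta=1$, $s=K\gamma^{-1/2}$). The normalized $f(y)=Ky-K^2/(2\gamma)$ vanishes only at $K/(2\gamma)$, and $|f(\bar y)|=\tfrac{s}{2}\,K\gamma^{-1/2}$. Clipping to $a(y)=K\max(-1,\min(y,1))$, which respects $\|a\|_\infty\le K$, still gives $|f(\bar y)|\ge K/2$ once $K\ge 2\gamma$. This is far above any fixed multiple of $K\gamma^{-1/2}$ when $\gamma\gg1$.

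\emph{Repair.} The correct estimate comes from the normalization itself. Put $D:=f-f(\bar y)$, so that $|D(y)|\le K_t\|y-\bar y\|^{\bar\beta}$ and $f(\bar y)=-\log\int e^{D}\dd\tilde p_{t,x}$.
\begin{itemize}
\item Jensen gives $f(\bar y)\le K_t\E\|\tilde W_{t,x}-\bar y\|^{\bar\beta}\le Cs$.
\item With $X:=K_t\|\tilde W_{t,x}-\bar y\|^{\bar\beta}$, one has $-f(\bar y)\le\log\E e^{X}\le C(s+s^2)$. Use $\log\E e^X\le\E[Xe^X]$ for $s\le1$, and your Young bound for $s\ge1$.
\end{itemize}
Hence $|f(y)|\le K_t\|y-\bar y\|^{\bar\beta}+C(s+s^2)$. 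Your Cauchy--Schwarz step then yields
$|I_j|\le C\gamma_t^{-j/2}(s+s^2)e^{C(s+s^2)}\le C\gamma_t^{-j/2}\,s\,e^{Cs^2}$,
so $|I_1|^2+|I_2|\le C\gamma_t^{-1}s\,e^{Cs^2}$. This is the case $\delta=1$, and $\delta=0$ follows from $s\le e^{s^2}$.

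With this repair the argument goes through. Note that the $s^2$ term in $|f(\bar y)|$, i.e.\ the normalizing constant $\log\int e^{a_t}\dd\tilde p_{t,x}$ itself, is a genuine source of the exponential envelope and cannot be dropped.
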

\begin{proof}
We apply Proposition~\ref{prop:cov-diff-tilt} with $\mu_1 = p_{t,x}$ and $\mu_2 = \tilde{p}_{t,x}$. The exponential tilt is exactly the Hölder perturbation $f(y) = a_t(y) - \log \int e^{a_t} \dd \tilde{p}_{t,x}$. Since $\tilde{p}_{t,x}$ is $\gamma_t$-strongly log-concave ($\gamma_t = \alpha_t + \sigma_t^{-2}$), Lemma~\ref{lemma:moments-tilt} bounds the moment differences, with the exponential envelope arising from normalizing the tilt.
\end{proof}

\begin{lemma}[Mixed covariance bound]\label{prop:boundsecondterm}
For the cross-covariance term, we have for any $h\in\mathbb S^{d-1}$:
\[
\big|h^\top\mathrm{Cov}\bigl(\dot{m}_t(Z^\circ),m_t(Z^\circ) \mid X^\circ_t=x\bigr)h\big|
\le C L_t\sigma_t^2\frac{1}{1+\alpha_t\sigma_t^2} \exp\!\Big(CK_t^2(\alpha_t+\sigma_t^{-2})^{-\bar\beta}\Big).
\]
\end{lemma}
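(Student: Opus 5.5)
The natural route is a Cauchy--Schwarz argument on the posterior cross-covariance. For fixed $x$ and $h\in\mathbb S^{d-1}$, I would write
\[
h^\top\mathrm{Cov}\bigl(\dot m_t(Z^\circ),m_t(Z^\circ)\mid X^\circ_t=x\bigr)h=\mathrm{Cov}\bigl(h^\top\dot m_t(Z^\circ),\,h^\top m_t(Z^\circ)\mid X^\circ_t=x\bigr).
\]
The covariance inequality then bounds its absolute value by
\[
\mathrm{Var}\bigl(h^\top\dot m_t(Z^\circ)\mid X^\circ_t=x\bigr)^{1/2}\,\mathrm{Var}\bigl(h^\top W_{t,x}\bigr)^{1/2},
\]
where $W_{t,x}\sim p_{t,x}$ is the posterior law of the centers. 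The first factor is controlled directly by Assumption~\ref{assum:lambda-path-conditions}: it is at most $L_t\,\sigma_t(1+\alpha_t\sigma_t^2)^{-1/2}$. Both factors scale like $(\alpha_t+\sigma_t^{-2})^{-1/2}=\sigma_t(1+\alpha_t\sigma_t^2)^{-1/2}$, so their product has exactly the announced form $L_t\sigma_t^2/(1+\alpha_t\sigma_t^2)$, provided the second factor is bounded accordingly.

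It therefore remains to show
\[
\mathrm{Var}(h^\top W_{t,x})\le C\,\frac{\sigma_t^2}{1+\alpha_t\sigma_t^2}\,\exp\!\bigl(CK_t^2(\alpha_t+\sigma_t^{-2})^{-\bar\beta}\bigr).
\]
For this I would split
\[
\mathrm{Var}(h^\top W_{t,x})\le \mathrm{Var}(h^\top\tilde W_{t,x})+\bigl|h^\top(\mathrm{Cov}(W_{t,x})-\mathrm{Cov}(\tilde W_{t,x}))h\bigr|.
\]
\begin{itemize}
\item The proxy term is handled by Brascamp--Lieb, exactly as in the proof of Lemma~\ref{lemma:boundeasy}: since $\tilde p_{t,x}$ is $(\alpha_t+\sigma_t^{-2})$-strongly log-concave, its directional variance is at most $(\alpha_t+\sigma_t^{-2})^{-1}$.
\item The difference is handled by Lemma~\ref{prop:finalestimatecompcov} with $\delta=0$. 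This gives $C(\alpha_t+\sigma_t^{-2})^{-1}\exp(CK_t^2(\alpha_t+\sigma_t^{-2})^{-\bar\beta})$.
\end{itemize}
Summing the two, and absorbing the $1$ into the exponential envelope since the exponent is nonnegative, gives the required variance bound.

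Taking the square root of that bound halves the exponent constant. Since $\exp(\tfrac C2\,\cdot)\le\exp(C\,\cdot)$, I would simply keep the same generic constant $C$ in the final statement.

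The main obstacle is a consistency check rather than a computation. The variance bound in Assumption~\ref{assum:lambda-path-conditions} and the covariance comparison of Lemma~\ref{prop:finalestimatecompcov} must refer to the same posterior, namely the law of $Z^\circ$, equivalently of $m_t(Z^\circ)$, given $X^\circ_t=x$. I would state explicitly that $\mathrm{Cov}(m_t(Z^\circ)\mid X^\circ_t=x)=\mathrm{Cov}(W_{t,x})$ by the definition of $p_{t,x}$ as the pushforward of $\pi_{t,x}$ under $m_t$. I would also check that the Hölder perturbation constant $K_t$ used in Lemma~\ref{prop:finalestimatecompcov} is the one from Assumption~\ref{assum:lambda-path-conditions}. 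Once that identification is in place, the argument is only Cauchy--Schwarz plus the two earlier bounds.
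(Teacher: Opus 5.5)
Your proposal is correct and follows the paper's own proof: Cauchy--Schwarz on the scalar posterior covariance, Assumption~\ref{assum:lambda-path-conditions} for the $\dot m_t$ factor, and Brascamp--Lieb together with Lemma~\ref{prop:finalestimatecompcov} for the variance of the centers. You also make explicit what the paper leaves implicit, namely the choice $\delta=0$ and the absorption of the additive constant and the halved exponent into the generic $C$.
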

\begin{proof}
Apply the Cauchy--Schwarz inequality to bound the mixed covariance by the product of the posterior standard deviations of $\dot{m}_t(Z^\circ)$ and $m_t(Z^\circ)$. The first is bounded by $L_t$ via Assumption~\ref{assum:lambda-path-conditions}; the second is bounded by combining the Brascamp-Lieb proxy variance and Lemma~\ref{prop:finalestimatecompcov}.
\end{proof}

Combining Lemmas~\ref{lemma:boundeasy}, \ref{prop:finalestimatecompcov}, and \ref{prop:boundsecondterm} into the Jacobian formula~\eqref{eq:propnablav} recovers the global abstract estimate for $\bar \beta = \min(1, \beta)$:
\begin{equation}\label{eq:abstract-negative-etat}
\lambda_{\max}(\nabla a_t(x)) \le \frac{1}{1+\alpha_t\sigma_t^2} \left( C e^{C K_t^2(\alpha_t+\sigma_t^{-2})^{-\bar\beta}} \Big(L_t-\eta_t(K_t\sigma_t^{\bar\beta})^{\delta}\Big) +\eta_t\alpha_t\sigma_t^2 \right).
\end{equation}

\subsection{Application to benchmark models and higher-order regularity}
For both Vanilla FM and Rescaled Diffusion, the interpolation is linearly scaled $m_t(z)=tz$. Thus, the underlying distributions parameters naturally rescale as $\alpha_t=\alpha t^{-2}$, $K_t=Kt^{-\beta}$, and $L_t=t^{-1}$. Notice that the exponential envelope term $K_t^2(\alpha_t+\sigma_t^{-2})^{-\bar\beta}\le K_t^2\alpha_t^{-\bar\beta}=K^2\alpha^{-\bar\beta}$, meaning it remains uniformly bounded by a constant across all $t \in (0,1)$.

\begin{theorem}[One-sided Lipschitz estimate]\label{thm:main-clean-osl}
Under Assumption~\ref{ass:main-target} and the models in Definition~\ref{def:chapter3_benchmarks}, there exists a constant $C>0$ such that for all $t\in(0,1)$,
\[
\sup_{x\in\mathbb R^d}\lambda_{\max}\bigl(\nabla a_t(x)\bigr) \le C(1-t)^{\frac{\beta\wedge 1}{2}-1}.
\]
Consequently, the critical stability integral is finite: $\sup_{\tau\in[0,1]}\int_\tau^1\sup_{x\in\mathbb R^d}\lambda_{\max}\bigl(\nabla a_t(x)\bigr)\,\dd t<\infty$~\cite{stephanovitch2026lipschitz}.
\end{theorem}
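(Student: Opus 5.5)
The plan is to specialize the abstract estimate \eqref{eq:abstract-negative-etat} to the two models of Definition~\ref{def:chapter3_benchmarks}. For each model I will first compute the path parameters $(\sigma_t,\eta_t,\alpha_t,K_t,L_t)$. I will then choose the switch $\delta\in\{0,1\}$ separately near $t=0$ and near $t=1$, and read off the singular exponent.

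\textbf{Step 1: verify the hypotheses along the path.} In both models $m_t(z)=tz$, so $(m_t)_\# p^\star(w)=t^{-d}p^\star(w/t)$. This law has the form $\exp(-u_t+a_t)$ with $u_t=u(\cdot/t)$, which is $\alpha t^{-2}$-strongly convex, and $a_t=a(\cdot/t)$, whose Hölder norm is at most $Kt^{-\beta}$ up to constants (taking $t\le1$). Hence $\alpha_t=\alpha t^{-2}$ and $K_t=Kt^{-\beta}$.

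For the variance condition of Assumption~\ref{assum:lambda-path-conditions}, note that $\dot m_t(z)=z=m_t(z)/t$. Therefore
\[
\mathrm{Var}\bigl(h^\top\dot m_t(Z^\circ)\mid X^\circ_t=x\bigr)=t^{-2}\,h^\top\mathrm{Cov}(W_{t,x})h .
\]
The right-hand side is bounded by the Brascamp--Lieb bound on $\mathrm{Cov}(\tilde W_{t,x})$ from Lemma~\ref{lemma:boundeasy}, plus the perturbative correction of Lemma~\ref{prop:finalestimatecompcov} with $\delta=0$. Since the exponential envelope satisfies $K_t^2(\alpha_t+\sigma_t^{-2})^{-\bar\beta}\le K^2\alpha^{-\bar\beta}$ uniformly in $t$, this gives $L_t\lesssim t^{-1}$.

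I also need $\eta_t\le0$ in order to use Lemma~\ref{lemma:boundeasy}.
\begin{itemize}[leftmargin=*]
\item For vanilla flow matching, $\sigma_t=1-t$ and $b_t=0$, so $\eta_t=-1/(1-t)$.
\item For rescaled diffusion, $\sigma_t^2=1-t^2$ and $b_t^2=1/t$, so
\[
\eta_t=-\frac{t}{1-t^2}-\frac{1}{t(1-t^2)}=-\frac{1+t^2}{t(1-t^2)} .
\]
\end{itemize}
Both are negative on $(0,1)$.

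\textbf{Step 2: the regime $t\to1$.} Here $\sigma_t\to0$ and $\alpha_t\to\alpha$, so the prefactor $1/(1+\alpha_t\sigma_t^2)$ is bounded. I take $\delta=1$, drop the nonpositive term $\eta_t\alpha_t\sigma_t^2$, and note that $L_t$ is bounded. What remains is $-\eta_t K_t\sigma_t^{\bar\beta}$.
\begin{itemize}[leftmargin=*]
\item For flow matching this term is of order $(1-t)^{-1}(1-t)^{\bar\beta}=(1-t)^{\bar\beta-1}$.
\item For rescaled diffusion, $\sigma_t\asymp(1-t)^{1/2}$ and $|\eta_t|\asymp(1-t)^{-1}$, so the term is of order $(1-t)^{\bar\beta/2-1}$.
\end{itemize}
Since $(1-t)^{\bar\beta-1}\le(1-t)^{\bar\beta/2-1}$, the uniform statement keeps the worse exponent $\frac{\beta\wedge1}{2}-1$.

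\textbf{Step 3: the regime $t\to0$.} Here $\sigma_t\to1$ and $\alpha_t\sigma_t^2\asymp t^{-2}$, so the prefactor is of order $t^2$. I take $\delta=0$ to avoid the blow-up of $K_t=Kt^{-\beta}$. Inside the parentheses, $L_t=t^{-1}$, and $-\eta_t$ is of order $1$ for flow matching and $t^{-1}$ for rescaled diffusion. After multiplying by $t^2$, both contributions are $O(t)$, and the remaining term $\eta_t\alpha_t\sigma_t^2$ is negative. Hence $\lambda_{\max}(\nabla a_t)$ is bounded on $(0,1/2]$.

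Combining the two regimes with continuity on compact subintervals gives
\[
\sup_x\lambda_{\max}\bigl(\nabla a_t(x)\bigr)\le C(1-t)^{\frac{\beta\wedge1}{2}-1}\qquad\text{for all } t\in(0,1).
\]
Since $\frac{\beta\wedge1}{2}-1>-1$, this bound is integrable up to $t=1$, which gives the finiteness of the stability integral.

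\textbf{Main obstacle.} I expect the hardest step to be Step~1, namely checking rigorously that the rescaled family satisfies Assumption~\ref{assum:lambda-path-conditions} with constants uniform in $t$. This includes the mild tail assumption being preserved under dilation and the bound $L_t\lesssim t^{-1}$. It also requires that the constant $C$ in Lemmas~\ref{prop:finalestimatecompcov} and \ref{prop:boundsecondterm} does not depend on $t$ once the envelope is bounded. The bookkeeping of exponents in Steps~2 and 3 is then routine.
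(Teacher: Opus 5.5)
Your proposal is correct and is essentially the paper's proof. Both specialize \eqref{eq:abstract-negative-etat} with $\alpha_t=\alpha t^{-2}$, $K_t=Kt^{-\beta}$ and $L_t=t^{-1}$; on $(0,1/2]$ the prefactor $1/(1+\alpha_t\sigma_t^2)\lesssim t^2$ absorbs the singularities of $L_t$ and $\eta_t$, and on $[1/2,1)$ the choice $\delta=1$ gives $(1-t)^{\bar\beta/2-1}$.

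You also fill in points the paper leaves implicit: you derive $L_t\lesssim t^{-1}$ from $\dot m_t=m_t/t$, and you choose $\delta=0$ on the early block, which is what makes that block work for all $\beta$ since $K_t=Kt^{-\beta}$ blows up as $t\to0$. The continuity argument you invoke is unnecessary, because your two regimes already cover all of $(0,1)$.
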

\begin{proof}
For \textbf{Vanilla FM}, $\sigma_t=1-t, b_t=0 \implies \eta_t = -1/(1-t) \le 0$. We evaluate \eqref{eq:abstract-negative-etat} by splitting the time horizon. On the early interval $[0,1/2]$, $\sigma_t\asymp 1$ and $1+\alpha_t\sigma_t^2\gtrsim t^{-2}$. This $t^{-2}$ denominator compensates for the $t^{-1}$ singularity of $L_t$, leading to a bounded drift $\lambda_{\max}(\nabla a_t(x))\le C$. As $t \to 1$, $t^{-1}$ stabilizes and $1+\alpha_t\sigma_t^2\asymp 1$. Substituting $\delta=1$ yields:
\[
\lambda_{\max}(\nabla a_t(x)) \le C\Big(1+(1-t)^{-1}(1-t)^{\bar\beta}\Big) \le C(1-t)^{\bar\beta/2-1}.
\]
For \textbf{Rescaled Diffusion}, $\sigma_t=\sqrt{1-t^2}, b_t=t^{-1/2} \implies \eta_t = -\frac{1+t^2}{t(1-t^2)} \le 0$. On $[0,1/2]$, $-\eta_t\lesssim t^{-1}$, but the strong log-concavity term $1+\alpha_t\sigma_t^2\gtrsim t^{-2}$ absorbs the singularity. On $[1/2,1)$, the variance $1-t^2 = (1-t)(1+t)$ is bounded by a multiple of $1-t$. Thus:
\[
\lambda_{\max}(\nabla a_t(x)) \le C\Big(1+(1-t^2)^{-1}(1-t^2)^{\bar\beta/2}\Big) \le C(1-t)^{\bar\beta/2-1}.
\]
\end{proof}

Furthermore, restricting our view to the effective support $A_t^\varepsilon$ ensures the drift inherits the exact $\beta$-smoothness of the target:
\begin{theorem}[High-probability Hölder regularity]\label{thm:chapter3_holder}
Under identical assumptions, for every $\varepsilon\in(0,1)$ and $t\in(0,1)$ there exists an effective support set $A_t^\varepsilon \subset \mathbb R^d$ with probability mass $p_t(A_t^\varepsilon)\ge 1-\varepsilon$, such that for all $\gamma\ge 0$ and $k\in\mathbb N$:
\begin{equation}\label{eq:chapter3_holder}
\|\partial_t^k a_t\|_{\mathcal{H}^\gamma(A_t^\varepsilon)} \underset{\mathrm{polylog}(\epsilon^{-1})}{\lesssim} C(1-t)^{-\max\left(0, \frac12+k+\frac{\gamma-\beta}{2}\right)}.
\end{equation}
\end{theorem}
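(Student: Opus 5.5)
We prove Theorem~\ref{thm:chapter3_holder} by expressing every space--time derivative of $a_t$ through posterior cumulants and bounding these cumulants with the log-concave proxy machinery of the previous subsection, now restricted to a high-probability set.

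\textbf{Step 1: derivatives as posterior cumulants.} Iterating the differentiation identity from the proof of Proposition~\ref{propnablav}, $\nabla_x \mathbb E[f(Z^\circ)\mid X^\circ_t=x]=\sigma_t^{-2}\mathrm{Cov}(f(Z^\circ),m_t(Z^\circ)\mid X^\circ_t=x)$, shows that $\nabla^j a_t(x)$ is a linear combination of mixed posterior cumulants of order $j+1$ of $(\dot m_t(Z^\circ),m_t(Z^\circ))$, weighted by $\sigma_t^{-2j}$ and $\eta_t$. Time derivatives $\partial_t^k$ are handled the same way. Since $m_t(z)=tz$, differentiating the log-posterior $-\|x-tz\|^2/(2\sigma_t^2)$ in $t$ produces polynomials in $(x-tz)/\sigma_t$ times powers of $\dot\sigma_t/\sigma_t\asymp(1-t)^{-1}$ near $t=1$. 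Each $\partial_t$ therefore behaves like $\sigma_t^{-2}$ times a centered polynomial observable, and costs one factor $(1-t)^{-1}$ relative to a spatial derivative, which costs $\sigma_t^{-1}\asymp(1-t)^{-1/2}$ once centering is taken into account. This bookkeeping accounts for the exponent $k+\gamma/2$.

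\textbf{Step 2: the effective support and local Hölder control of the posterior.} We take $A_t^\varepsilon=\{x:\ \mathrm{dist}(x,t\,\mathrm{supp}\,p^\star)\le C\sigma_t\sqrt{\log(1/\varepsilon)}\}$. Gaussian concentration of $\sigma_t\xi^\circ$ gives $p_t(A_t^\varepsilon)\ge1-\varepsilon$. On this set the posterior $p_{t,x}$ concentrates in a ball of radius $\lesssim\sigma_t\sqrt{\log(1/\varepsilon)}$ around a point $w(x)$, with strong log-concavity $\gamma_t=\alpha_t+\sigma_t^{-2}$ coming from the proxy $\tilde p_{t,x}$.

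We then Taylor-expand the Hölder perturbation $a$ to order $\lfloor\beta\rfloor$ around $w(x)$. The polynomial part only tilts the Gaussian-type proxy and yields cumulants of order $\sigma_t^{j+1}$ with no loss. The Hölder remainder contributes $K\sigma_t^{\beta}$, which we control with Proposition~\ref{prop:cov-diff-tilt} and Lemma~\ref{lemma:moments-tilt}, extended from second to higher moments. The exponential envelope stays bounded, exactly as in Theorem~\ref{thm:main-clean-osl}.

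Combining the two pieces, the order-$(j+1)$ centered cumulants are of size $\sigma_t^{j+1}(1+\sigma_t^{\beta-j})$. Multiplying by $\sigma_t^{-2j}$ and $|\eta_t|\asymp(1-t)^{-1}$ gives
\[
\|\nabla^j a_t\|_{L^\infty(A_t^\varepsilon)}\lesssim(1-t)^{-\max(0,\frac12+\frac{j-\beta}{2})}
\]
up to polylog factors. The extra $\tfrac12$ is precisely the size of $a_t$ itself, consistent with Proposition~\ref{prop:chapter3_oracle}.

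\textbf{Step 3: fractional orders and time derivatives.} For non-integer $\gamma$ we interpolate between $\nabla^{\lfloor\gamma\rfloor}a_t$ and $\nabla^{\lfloor\gamma\rfloor+1}a_t$ over balls of radius $\sigma_t$. Fixing $\sigma_t$ as the natural length scale makes the Hölder seminorm scale exactly like the integer-order bounds. Time derivatives follow Step 1: $\partial_t^k$ adds $k$ extra centered-moment insertions, each producing a factor $(1-t)^{-1}$, and the same cumulant estimates give the claimed exponent $\tfrac12+k+\tfrac{\gamma-\beta}{2}$. Away from $t=1$, the strong log-concavity term $1+\alpha_t\sigma_t^2\gtrsim t^{-2}$ absorbs the small-$t$ singularities of $L_t$ and $K_t$, as in the proof of Theorem~\ref{thm:main-clean-osl}. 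This yields bounds uniform on $[0,1/2]$.

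\textbf{Main obstacle.} The hard step is the higher-order cumulant estimate in Step 2. We need a bound of the form: a centered cumulant of order $r$ under a Hölder tilt of a strongly log-concave measure deviates from that of the proxy by at most $\sigma_t^{r}\min(1,K\sigma_t^{\beta-\text{(order absorbed)}})$. This must hold uniformly over $x\in A_t^\varepsilon$ and near the boundary of $\mathrm{supp}\,p^\star$, where the convex part $u$ can blow up and the tail assumption is used. I would first try to derive it from a Brascamp--Lieb / Poincaré hierarchy applied to the centered Taylor remainder, falling back on explicit Gaussian comparison once the posterior is localized at scale $\sigma_t$.
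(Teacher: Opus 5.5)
The notes state this result without proof and only cite the reference for it, so I assess your argument on its own terms. The architecture is sensible: derivatives as posterior cumulants, localization, Taylor expansion of $a$, interpolation for fractional $\gamma$. However, the step that carries the theorem fails, and two of your scaling claims are false.

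\textbf{The $\beta$-gain is never produced.} For the rescaled diffusion, $a_t=(x+2\nabla\log p_t(x))/t$. Hence $\nabla^j a_t=\tfrac{2}{t}\sigma_t^{-2j-2}\kappa_{j+1}(W_{t,x})$ for $j\ge 2$, where $\kappa_{j+1}$ is the cumulant tensor, and $\nabla a_t=\eta_t\bigl(\mathrm{Id}-\sigma_t^{-2}\mathrm{Cov}(W_{t,x})\bigr)+O(t^{-1})$.

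With your cumulant sizes $\sigma_t^{j+1}(1+\sigma_t^{\beta-j})$, this gives $\sigma_t^{-j-1}(1+\sigma_t^{\beta-j})\gtrsim(1-t)^{-(1+j)/2}$. That is no gain at all, so the bound at the end of Step~2 does not follow.

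What you actually need is that, on $A_t^\varepsilon$, the posterior is Gaussian with covariance \emph{exactly} $\sigma_t^2\mathrm{Id}$ up to relative errors $\sigma_t^{\beta}$:
$x-\E[W_{t,x}]=O(\sigma_t^{1+\beta\wedge 1})$, $\mathrm{Cov}(W_{t,x})-\sigma_t^2\mathrm{Id}=O(\sigma_t^{2+\beta\wedge 2})$, and $\kappa_r(W_{t,x})=O(\sigma_t^{r+\beta\wedge r})$ for $r\ge 3$. This is what compensates $\eta_t\mathrm{Id}$ and the $\sigma_t^{-2j-2}$ prefactors.

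Your tools cannot give this.
The proxy $\tilde p_{t,x}\propto e^{-u_t}\varphi_{\sigma_t}(x-\cdot)$ is not Gaussian, and strong log-concavity (Brascamp--Lieb, Poincaré) only yields $|\kappa_r(\tilde W_{t,x})|\lesssim\sigma_t^r$. A truncated Gaussian already saturates this.
Proposition~\ref{prop:cov-diff-tilt} and Lemma~\ref{lemma:moments-tilt} only control the tilt by $a$ relative to that proxy.
You would have to Taylor-expand $u$ as well. That requires $u$ to be locally as smooth as the gain you claim, roughly of order $\min(\beta,\gamma+1)$, whereas Assumption~\ref{ass:main-target} only gives $u\in\mathcal C^2$.

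You would also have to stay away from $\partial\,\mathrm{dom}(u)$, and your $A_t^\varepsilon$ fails in both regimes:
If $\mathrm{supp}\,p^\star=\R^d$, your set is all of $\R^d$, where $\|a_t\|_{L^\infty}=\infty$ already for Gaussian targets. For example, $a_t(x)=-x/t$ for $p^\star=\cN(0,\mathrm{Id})$. You must cap $\|x\|\lesssim\sqrt{\log(1/\varepsilon)}$, which is where the polylog comes from.
If $p^\star$ lives on a convex body with density bounded below (e.g.\ $[0,1]^d$), your set contains the boundary layer, of $p_t$-mass $\asymp\sigma_t$. There the posterior is a truncated Gaussian, so $\nabla^j\log p_t\asymp\sigma_t^{-j}$ with no $\beta$-gain, and $a_t\asymp\sigma_t^{-1}$. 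This layer cannot be discarded when $\varepsilon\ll\sigma_t$, so it is not a technicality that the tail assumption removes.

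\textbf{Time scalings.} The identification $\sigma_t^{-1}\asymp(1-t)^{-1/2}$ in Step~1 holds only for the rescaled diffusion.
For vanilla flow matching, $\sigma_t=1-t$ and $a_t=x/t+\frac{1-t}{t}\nabla\log p_t$. Your same bookkeeping then gives $\|\nabla^\gamma a_t\|\lesssim(1-t)^{-\max(0,\gamma-\beta)}$, and this is generically the true size (e.g.\ for a lacunary, Weierstrass-type $a$).
That size exceeds the displayed bound once $\gamma>\beta+1$, so no argument reaches that exponent for vanilla FM. The natural formulation is a bound on $\nabla\log p_t$ in powers of $\sigma_t$, multiplied by $\sigma^2_{\mathrm{fwd},t}+b_t^2$, which equals $2/t$ resp.\ $(1-t)/t$.

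Step~3 also transfers a one-sided argument to a two-sided norm.
For the rescaled diffusion, $\eta_t\sim -1/t$ and $a_t(x)\approx -x/t$ as $t\to 0$. So $\|a_t\|$ and $\|\nabla a_t\|$ are of order $t^{-1}$ on any set of non-negligible mass.
The proof of Theorem~\ref{thm:main-clean-osl} survives only because a negative $\eta_t$ is harmless for an \emph{upper} bound on $\lambda_{\max}$.

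As it stands, your strategy can at best prove the statement for the rescaled diffusion on $t\in[1/2,1)$. Even then it needs the near-Gaussian cumulant estimates above, proved on a bounded set kept at distance $\gg\sigma_t\sqrt{\log(1/\varepsilon)}$ from $\partial\,\mathrm{dom}(u)$, under extra smoothness of $u$.
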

This theorem \cite{stephanovitch2025generalization} asserts that to optimize approximation, our neural network must grow in complexity at a specific rate dictated by the factor $(1-t)^{-1}$ near the terminal time.

\bibliographystyle{alpha}
\bibliography{biblio}
\chapter{Minimax Optimality and Extensions}\label{chap:minimaxproof}
\minitoc

This chapter concludes our statistical analysis by establishing a minimax upper bound for Gaussian generative models. We deploy a time-dependent neural network that adapts its architecture to the localized Hölder regularity derived in Chapter~\ref{chap:scoreregularity}. We then integrate this blockwise approximation error through the backward Kolmogorov stability mechanism to deduce the final 1-Wasserstein rate. Finally, we discuss related topics, including global Lipschitz properties of the flow and expansions to manifold data and jump processes.

\section{Construction of the neural network class}

Because the exact drift $a_t$ becomes significantly larger and less regular as $t$ approaches $1$, deploying a fixed approximation class uniformly across time would be suboptimal. We require a multiscale construction that adapts its architectural capacity to the size of the remaining time $1-t$.

\subsection{Time partitioning}
Let $K_n=\log(n)$. We partition the generative horizon $[0,T_n]$ into sequential intervals $0=t_0<t_1<\cdots<t_{K_n}=T_n$ by imposing the rigid geometric condition:
\begin{equation}
\label{eq:chapter4_geometric_partition}
1-t_k = \alpha(1-t_{k+1}), \qquad 0\le k\le K_n-1,
\end{equation}
for a fixed constant $\alpha>1$. The geometry ensures that the relative size of each block is constant, granting the crucial summability property:
\begin{equation}
\label{eq:chapter4_partition_sum}
\sum_{k=0}^{K_n-1}\frac{t_{k+1}-t_k}{1-t_k} = \sum_{k=0}^{K_n-1}\left(1 - \alpha^{-1}\right) \asymp \log n.
\end{equation}

\subsection{Time-dependent architecture}
\begin{definition}[Constrained Network Class $\Psi$]
\label{def:chapter4_network_class}
For integers $L,W\ge 1$ and bounds $B,V,V'>0$, let $\Psi(L,W,B,V,V')$ denote the class of dense $\tanh$ neural networks $\phi:\mathbb R^{d+1}\to\mathbb R^d$ satisfying:
\begin{enumerate}[leftmargin=*]
    \item Depth bounded by $L$ and width bounded by $W$.
    \item Absolute weights and biases constrained by $B$.
    \item Magnitude uniformly bounded by $V$: $\|\phi\|_\infty\le V$.
    \item Uniform one-sided Lipschitz condition enforcing stability: $\lambda_{\max}(\nabla_x\phi(t,x))\le V'$.
\end{enumerate}
\end{definition}

Based directly on the analytic limits in Theorem~\ref{thm:chapter3_holder}, we set the capacity parameters for block $[t_k,t_{k+1})$ to $L_k=C$, $W_k=C(1-t_k)^{-1}n^{\frac{d-2}{2\beta+d}}$, and $B_k=n^C$. We match the stability constraints to the exact drift's regularities: $V_k=C(1-t_k)^{-1/2}$ and $V_k'=C(1-t_k)^{-1+\frac{\beta\wedge 1}{2}}$. Note that this explicit $V'_k$ ensures our neural candidates intrinsically satisfy the exact OSL bounds, securing propagation stability. 

We aggregate these segment-wise networks into the global estimator class $\mathcal V_n$:
\begin{equation}
\label{eq:chapter4_global_class}
\mathcal V_n := \left\{ \sum_{k=0}^{K_n-1}\phi_k(t,\cdot)\,\mathbf 1_{[t_k,t_{k+1})}(t) \;\middle|\; \phi_k\in\Psi(L_k,W_k,B_k,V_k,V_k') \right\}.
\end{equation}

\section{Approximation, generalization, and proof of the rate}
\label{sec:chapter4_proof}

\subsection{Blockwise approximation and generalization bounds}
Invoking standard neural approximation theory over our high-probability Hölder sets $A_{t_k}^{1/n}$, we verify our class $\Psi_k$ can accurately fit the score:
\begin{proposition}[Blockwise approximation]
\label{prop:chapter4_approximation}
For each block $k$, there exists an idealized network $a_k\in \Psi_k$ such that:
\begin{equation}
\label{eq:chapter4_approximation_error}
\sup_{t\in[t_k,t_{k+1}]}
\sup_{x\in A_{t_k}^{1/n}}
\|a_t(x)-a_k(t,x)\|
\underset{\mathrm{polylog}(n)}{\lesssim}\frac{1}{1-t_k}
\,n^{-\frac{\beta+1}{2\beta+d}}.
\end{equation}
The covering entropy satisfies $\log \mathcal N\bigl(\Psi_k,\|\cdot\|_\infty,n^{-1}\bigr) \lesssim \frac{1}{1-t_k} n^{\frac{d-2}{2\beta+d}}$.
\end{proposition}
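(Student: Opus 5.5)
The plan is to treat each block $[t_k,t_{k+1})$ as a separate space-time approximation problem on the compact-like set $[t_k,t_{k+1}]\times A_{t_k}^{1/n}$. We feed in the regularity of Theorem~\ref{thm:chapter3_holder} and build a $\tanh$ network by a local Taylor-polynomial construction. First, we rescale time by $s=(t-t_k)/(1-t_k)\in[0,1-\alpha^{-1}]$. The geometric partition \eqref{eq:chapter4_geometric_partition} makes every block have relative length of order one. Under this rescaling, each time derivative gains a factor $(1-t_k)$. Theorem~\ref{thm:chapter3_holder} with $\gamma=\beta$ and arbitrary $k$ then shows that $(s,x)\mapsto a_{t_k+s(1-t_k)}(x)$ has $\mathcal H^\beta$-norm in $x$ of order $(1-t_k)^{-1/2}$, up to polylogarithmic factors. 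Its $j$-th $s$-derivatives are bounded by $(1-t_k)^{j}(1-t_k)^{-1/2-j}$, which is again of order $(1-t_k)^{-1/2}$. So on each block the rescaled drift is $\beta$-smooth in space and analytic-like in time, uniformly in $k$ up to the overall factor $(1-t_k)^{-1/2}$. We also need that $A_{t}^{1/n}$ may be chosen inside a ball of polylogarithmic radius, which is standard from Gaussian tails.

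Second, we approximate spatially at a resolution adapted to the noise. The posterior is concentrated at scale $\sigma_t\asymp(1-t)^{1/2}$, so the natural spatial grid on $A_{t_k}^{1/n}$ has mesh $h_k$. We cover $A_{t_k}^{1/n}$ by cubes of side $h_k$ and use $\tanh$ partitions of unity combined with local polynomials of degree $\lfloor\beta\rfloor$ in $x$ and of degree $O(\log n)$ in $s$. The standard $\tanh$ approximation results (De Ryck--Lanthaler--Mishra type) then give constant depth. The width is proportional to the number of cubes times the polynomial size, and the weights are bounded by $n^C$. We choose $h_k$ so that the local Taylor error, of order $\|a\|_{\mathcal H^\beta}h_k^{\beta}$, matches $(1-t_k)^{-1}n^{-(\beta+1)/(2\beta+d)}$. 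Since the effective support of $p_t$ is a $\sigma_t$-neighbourhood of a $d$-dimensional set, the cube count scales like $(1-t_k)^{-1}$ times the base count $n^{d/(2\beta+d)}$, reduced by the dimensional gain. This should reproduce $W_k\asymp(1-t_k)^{-1}n^{(d-2)/(2\beta+d)}$. Here I plan to fix $h_k$ by directly solving the resulting balance equation rather than guessing its form.

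Third, we check the constraints in Definition~\ref{def:chapter4_network_class}. The sup bound $V_k$ follows because the network is close to $a_t$, whose magnitude on $A_t^{1/n}$ is of order $(1-t_k)^{-1/2}$ (Theorem~\ref{thm:chapter3_holder} with $\gamma=0$); we clip with a bounded $\tanh$ output layer. The one-sided Lipschitz bound $V_k'$ is the delicate point. I would approximate $a_t$ jointly with its Jacobian in $\mathcal C^1$ on the cubes, which is possible when $\beta>1$ and otherwise requires mollifying first at scale $h_k$. Then $\lambda_{\max}(\nabla_x\phi)\le\lambda_{\max}(\nabla a_t)+\|\nabla\phi-\nabla a_t\|$, and Theorem~\ref{thm:main-clean-osl} supplies the first term. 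Off $A_{t_k}^{1/n}$ we must taper the network toward a dissipative linear field to keep the OSL bound globally, and this tapering is the main obstacle. Finally, the covering entropy follows from the standard Lipschitz-in-parameters bound for networks with depth $L$, width $W$ and weights bounded by $B$, namely $\log\mathcal N\lesssim LW^2\log(BWn)$. This must be sharpened to be linear in the number of active (sparse) parameters, which is of order $W_k$ up to logarithms, giving the stated $(1-t_k)^{-1}n^{(d-2)/(2\beta+d)}$.
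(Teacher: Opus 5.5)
\textbf{There is a genuine gap in your second step.} Your overall route matches what the paper has in mind: its proof is a one-line appeal to standard $\tanh$ approximation on $A_{t_k}^{1/n}$, fed by Theorem~\ref{thm:chapter3_holder}. Write $\tau_k:=1-t_k$ and $\epsilon_n:=n^{-(\beta+1)/(2\beta+d)}$.

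Balancing your $\mathcal H^\beta$ Taylor error $\tau_k^{-1/2}h_k^\beta$ against $\tau_k^{-1}\epsilon_n$ gives $h_k=\tau_k^{-1/(2\beta)}\epsilon_n^{1/\beta}$. That means $h_k^{-d}=\tau_k^{d/(2\beta)}n^{d(\beta+1)/(\beta(2\beta+d))}$ cubes. This count \emph{decreases} as $t_k\to1$, so it cannot reproduce a $(1-t_k)^{-1}$ profile. Up to constants, its ratio to $W_k=\tau_k^{-1}n^{(d-2)/(2\beta+d)}$ is $(\tau_k n^{2/(2\beta+d)})^{(2\beta+d)/(2\beta)}$. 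So on every block with $1-t_k>n^{-2/(2\beta+d)}$ your network exceeds the allowed width, by a factor $n^{1/\beta}$ on the first block, and does not lie in $\Psi_k$.

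The ``dimensional gain'' you invoke to bridge this does not exist. $p^\star$ is a full-dimensional density on $\mathbb R^d$, so the $\sigma_t$-neighbourhood of its effective support is full-dimensional. Covering it at mesh $h$ costs $h^{-d}$ up to polylogarithms.

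\textbf{The fix is to use Theorem~\ref{thm:chapter3_holder} with $\gamma>\beta$.} This encodes the extra smoothness the noise creates at scale $\sigma_t\asymp(1-t)^{1/2}$.

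On early blocks ($\tau_k\ge n^{-2/(2\beta+d)}$), take $\gamma$ large and $h_k=c\,\tau_k^{1/2}$. The error is $\lesssim\tau_k^{-(1+\gamma-\beta)/2}h_k^{\gamma}=c^\gamma\tau_k^{(\beta-1)/2}$, which is at most $\tau_k^{-1}\epsilon_n$ once $c^\gamma\le\epsilon_n$. This uses $c^{-d}\tau_k^{-d/2}$ cubes. The overhead $c^{-d}$ is only polylogarithmic if $\gamma\asymp\log n$ is allowed. That requires factorial-type control of the constants in Theorem~\ref{thm:chapter3_holder} in both $\gamma$ and the time order, and the same caveat applies to your degree-$O(\log n)$ polynomials in $s$.

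On late blocks, take $\gamma=\beta+1$. The norm is then $\lesssim\tau_k^{-1}$, so $h=n^{-1/(2\beta+d)}$ gives error $\tau_k^{-1}\epsilon_n$ with $n^{d/(2\beta+d)}$ cubes.

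For $d\ge2$, both counts are at most $W_k$ in their regimes, up to polylogarithmic factors. The $(1-t_k)^{-1}$ profile of $W_k$ is not what approximation requires. It is the largest width for which the estimation term $\log\mathcal N/(n(1-t_k))$ of Proposition~\ref{prop:chapter3_oracle} stays below the squared approximation error.

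\textbf{Two smaller points.}
\begin{enumerate}
\item The entropy bound concerns the whole class $\Psi_k$. For its dense width-$W_k$ networks, parameter counting gives $L_kW_k^2\log(nB_kW_k)$, and the sparsity of one particular approximant cannot lower this. A bound linear in $W_k$ requires writing an explicit constraint of $O(W_k)$ nonzero weights into $\Psi_k$.
\item Tapering toward a dissipative \emph{linear} field contradicts $\|\phi\|_\infty\le V_k$. Taper to $0$ instead with a cutoff $\chi$ satisfying $\|\nabla\chi\|\lesssim1$. The cutoff then contributes at most $\|\phi\|_\infty\|\nabla\chi\|\lesssim(1-t_k)^{-1/2}\le V_k'$ to the Jacobian.
\end{enumerate}
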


By inserting this into the fixed-time oracle inequality (Proposition~\ref{prop:chapter3_oracle}), we address the spatial integration for the empirical drift $\hat{a}_t$. Inside $A_{t_k}^{1/n}$, the approximation error dominates. Outside this set, the probability mass is at most $1/n$. Because $p_t$ is $C(K^2+\sigma^2)$-subGaussian and the networks are bounded by $V_k \asymp (1-t_k)^{-1/2}$, the tail integral is constrained:
\begin{align*}
    \int_{(A_{t_k}^{1/n})^c} \|\phi(t, x) - a_t(x)\|^2 p_t(\dd x) 
    & \leq  C\log(n)^{C_2} \int_{(A_{t_k}^{1/n})^c} (1+\|x\|^2)(1+t_k^{-2}) p_t(\dd x) \\
   &\leq C(1+t_k^{-2})\log(n)^{C_2}n^{-1}.
\end{align*}
This tail penalty is negligible compared to the nonparametric rate. Thus, the expected $L^2$ generalization error over the interval $[t_k, t_{k+1}]$ is:
\begin{equation}
\label{eq:chapter4_blockwise_L2}
\mathbb E\Bigg[\int_{t_k}^{t_{k+1}}\int_{\mathbb R^d}
\|\hat a_t(x)-a_t(x)\|^2\,p_t(\dd x)\,\dd t\Bigg]
\underset{\mathrm{polylog}(n)}{\lesssim}\frac{t_{k+1}-t_k}{(1-t_k)^2} n^{-\frac{2(\beta+1)}{2\beta+d}}.
\end{equation}

\subsection{Synthesis: the final Wasserstein rate}

\begin{theorem}[Minimax upper bound]
\label{thm:chapter4_main}
Under the standing assumptions of Chapter~\ref{chap:scoreregularity}, deploying the geometrically partitioned network class $\mathcal V_n$ equipped with an early-stopping time $T_n = 1-n^{-\frac{2(\beta+1)}{2\beta+d}}$, the generative estimator $\hat p_n = \mathrm{Law}(\hat X_{T_n})$ attains the non-parametric minimax rate in 1-Wasserstein distance up to logarithmic factors \cite{stephanovitch2025generalization}:
\begin{equation}
\label{eq:chapter4_final_rate}
\sup_{p^\star \in \mathcal{H}^\beta_K} \mathbb E\bigl[W_1(\hat p_n, p^\star)\bigr] \underset{\mathrm{polylog}(n)}{\lesssim}n^{-\frac{\beta+1}{2\beta+d}}.
\end{equation}
\end{theorem}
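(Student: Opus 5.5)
We split the error with the triangle inequality, $W_1(\hat p_n,p^\star)\le W_1(p^\star,p_{T_n})+W_1(p_{T_n},\hat p_n)$. The first term is an early-stopping bias and the second is a learning error, which we propagate through the stability reduction.

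\textbf{Bias term.} Under either benchmark model in Definition~\ref{def:chapter3_benchmarks}, $X^\circ_{T_n}=T_nZ^\circ+\sigma_{T_n}\xi^\circ$. Coupling this with $Z^\circ$ gives
\[
W_1(p^\star,p_{T_n})\le (1-T_n)\,\mathbb E\|Z^\circ\|+\sigma_{T_n}\sqrt d .
\]
For vanilla flow matching, $\sigma_{T_n}=1-T_n$, so the bias is $n^{-\frac{2(\beta+1)}{2\beta+d}}$, which is negligible. For the rescaled diffusion, $\sigma_{T_n}\asymp (1-T_n)^{1/2}=n^{-\frac{\beta+1}{2\beta+d}}$, which is exactly the target rate. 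So this choice of $T_n$ is the largest early-stopping window compatible with the rate. A finer argument comparing $p_{T_n}$ to $p^\star$ through Hölder smoothing could improve this, but it is not needed.

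\textbf{Learning term.} Every element of $\mathcal V_n$ satisfies $\lambda_{\max}(\nabla_x\hat a_t)\le V'_k=C(1-t_k)^{-1+\frac{\beta\wedge1}{2}}$ on block $k$. We therefore apply Proposition~\ref{prop:chapter3_stability_reduction} with $\hat\ell_t=C(1-t)^{\frac{\beta\wedge1}{2}-1}$, using $1-t\le 1-t_k\le\alpha(1-t)$ on each block. Its integral over $[0,1]$ is finite, so $\exp(\int_t^{T_n}\hat\ell_s\,\dd s)\le C'$ uniformly in $t$ and $n$; this is where Theorem~\ref{thm:main-clean-osl} matters, since it makes the constraint $V'_k$ non-restrictive for approximation. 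We then bound $\mathbb E\,W_1(p_{T_n},\hat p_n)$ by $C'\sum_k\int_{t_k}^{t_{k+1}}\mathbb E\|a_t-\hat a_t\|_{L^1(p_t)}\dd t$. On each block, Cauchy--Schwarz in $(t,x,\text{data})$ and \eqref{eq:chapter4_blockwise_L2} give
\[
\int_{t_k}^{t_{k+1}}\mathbb E\|a_t-\hat a_t\|_{L^1(p_t)}\dd t\le \sqrt{t_{k+1}-t_k}\Bigl(\tfrac{t_{k+1}-t_k}{(1-t_k)^2}\Bigr)^{1/2}n^{-\frac{\beta+1}{2\beta+d}}=\tfrac{t_{k+1}-t_k}{1-t_k}\,n^{-\frac{\beta+1}{2\beta+d}},
\]
up to polylog factors. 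Summing with \eqref{eq:chapter4_partition_sum} costs only a $\log n$ factor, since $K_n\asymp\log n$ blocks are enough to reach $T_n$ with $1-T_n$ polynomial in $n$, for a suitable $\alpha$.

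\textbf{Main obstacle.} The delicate step is justifying \eqref{eq:chapter4_blockwise_L2}, specifically the uniformity of the oracle inequality (Proposition~\ref{prop:chapter3_oracle}) over a continuous time block rather than at a fixed time. We would handle this by treating $(t,x)$ jointly as the regression input, with $t$ uniform on the block. The envelope $V_k\asymp(1-t_k)^{-1/2}$ and the entropy bound of Proposition~\ref{prop:chapter4_approximation} then give variance
\[
\frac{1}{n(1-t_k)}\cdot\frac{1}{1-t_k}\,n^{\frac{d-2}{2\beta+d}}=\frac{1}{(1-t_k)^2}\,n^{-\frac{2\beta+2}{2\beta+d}},
\]
which matches the squared approximation bias in \eqref{eq:chapter4_approximation_error}. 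The tail outside $A_{t_k}^{1/n}$ contributes only $O(\mathrm{polylog}(n)/n)$, as already computed. Finally, all constants depend only on $(K,\beta,\alpha,d)$ through Assumption~\ref{ass:main-target}, so the bound holds uniformly and we may take the supremum over $p^\star$.
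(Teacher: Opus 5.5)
Your proposal is correct and follows essentially the same route as the paper: the triangle inequality, the stability reduction in which the built-in one-sided Lipschitz constraint $V'_k$ keeps the exponential factor bounded, blockwise Cauchy--Schwarz combined with \eqref{eq:chapter4_blockwise_L2} and the geometric summability \eqref{eq:chapter4_partition_sum}, and the coupling bound $W_1(p_{T_n},p^\star)\lesssim(1-T_n)+\sigma_{T_n}$ with $\sigma_{T_n}\lesssim(1-T_n)^{1/2}$. Your extra remarks refine points the paper leaves implicit rather than depart from its argument: the bias is of lower order for vanilla flow matching, $\alpha$ must be tuned so that $K_n=\log n$ blocks actually reach $T_n$, and the blockwise $L^2$ bound can be justified by a joint $(t,x)$ regression.
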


\begin{proof}
We decouple the total generative error using the triangle inequality:
\begin{equation}
\label{eq:chapter4_triangle}
W_1(\hat p_n,p^\star) \le W_1\bigl(\mathrm{Law}(\hat X_{T_n}), p_{T_n}\bigr) + W_1\bigl(p_{T_n},p^\star\bigr).
\end{equation}

\textbf{Step 1: Stability integration of the learned-flow error.}
By the stability reduction (Proposition~\ref{prop:chapter3_stability_reduction}), we can transport the drift error to the final Wasserstein distance, provided we control the one-sided Lipschitz constant of our estimator. Crucially, our network class intrinsically respects the condition $\lambda_{\max}(\nabla \hat a_t(x)) \le V'_k \approx \ell_t$. Therefore, the integral $\int_0^1 \hat{\ell}_s \dd s < \infty$, keeping the exponential amplification factor uniformly bounded by a constant $C$. Applying Cauchy-Schwarz in time across the partitioned integral gives:
\begin{align}
\label{eq:chapter4_expected_stability}
\mathbb E\Bigl[W_1\bigl(\mathrm{Law}(\hat X_{T_n}),p_{T_n}\bigr)\Bigr]
&\le
C\sum_{k=0}^{K_n-1}
\sqrt{t_{k+1}-t_k}
\left(
\mathbb E\int_{t_k}^{t_{k+1}}\int_{\mathbb R^d}
\|a_t(x)-\hat a_t(x)\|^2\,p_t(\dd x)\,\dd t
\right)^{1/2}.
\end{align}
Inserting our blockwise $L^2$ estimate \eqref{eq:chapter4_blockwise_L2} and pulling out the $n$ terms leaves us exactly with the summability condition \eqref{eq:chapter4_partition_sum}:
\begin{align*}
\mathbb E\Bigl[W_1\bigl(\mathrm{Law}(\hat X_{T_n}),p_{T_n}\bigr)\Bigr]
&\underset{\mathrm{polylog}(n)}{\lesssim} n^{-\frac{\beta+1}{2\beta+d}} \sum_{k=0}^{K_n-1} \sqrt{t_{k+1}-t_k} \sqrt{\frac{t_{k+1}-t_k}{(1-t_k)^2}} \\
&= n^{-\frac{\beta+1}{2\beta+d}} \sum_{k=0}^{K_n-1} \frac{t_{k+1}-t_k}{1-t_k}.
\end{align*}
We recognize the geometric sum evaluates exactly to a $\mathcal{O}(\log n)$ bounded factor. Exploiting this geometric summability verifies that the cumulative learned-flow transport error rigorously hits the desired benchmark:
\begin{equation}
\label{eq:chapter4_exact_vs_learned_rate}
\mathbb E\Bigl[W_1\bigl(\mathrm{Law}(\hat X_{T_n}), p_{T_n}\bigr)\Bigr]
\underset{\mathrm{polylog}(n)}{\lesssim}
\,n^{-\frac{\beta+1}{2\beta+d}}.
\end{equation}

\textbf{Step 2: Early stopping bias.} To avert the insurmountable variance crash at $t=1$, we evaluated at $T_n$. Using the interpolating coupling~\eqref{eq:chapter3_reference_path}, the early stopping deviation is trivially bounded:
\[
W_1(p_{T_n},p^\star) \le \mathbb E\bigl[\|X^\circ_{T_n} - Z^\circ\|\bigr] \le \mathbb E\bigl[\|m_{T_n}(Z^\circ)-Z^\circ\|\bigr] + \sigma_{T_n}\,\mathbb E\|\xi^\circ\|.
\]
In our benchmark interpolations, the drift centers linearly $m_t(z)=tz$, meaning the distance $\|z^\circ - m_{T_n}(z^\circ)\| = (1-T_n)\|z^\circ\|$. The standard deviation shrinks as $\sigma_{T_n} \lesssim (1-T_n)^{1/2}$ for both models. Because the target $p^\star$ has a finite first moment, we obtain a bias bounded by the variance schedule:
\begin{equation}
\label{eq:chapter4_stopping_bias}
W_1(p_{T_n},p^\star) \le C(1-T_n) + C(1-T_n)^{1/2} \asymp C(1-T_n)^{1/2}.
\end{equation}
By deliberately tuning our early stopping point to balance the approximation error and the stopping bias, we set $(1-T_n)^{1/2} = n^{-\frac{\beta+1}{2\beta+d}}$, which gives $T_n = 1-n^{-\frac{2(\beta+1)}{2\beta+d}}$. Substituting both bounds back into the triangle inequality establishes the optimal minimax rate.
\end{proof}

\section{Generalizations and algorithmic considerations}

\subsection{Time discretization and sharp sampling rates}
To yield an implementable algorithm, the continuous generative SDE is numerically integrated via an Euler-Maruyama scheme on a finite time grid. The sharp one-sided Lipschitz bounds on $a_t$ established in Chapter~\ref{chap:scoreregularity} directly dictate the local numerical truncation error. Because $\|\nabla a_t\|_{\mathrm{op}} \lesssim (1-t)^{-1}$ diverges near $t \to 1$, using a uniform step size leads to accumulating numerical errors. 

If one instead applies a tailored geometric grid that scales down as $t$ approaches $1$, it compensates for the singularity. For $N$ integration steps, the fully discrete sampler's error achieves an independent upper bound of $W_2\bigl(p^\star,\mathrm{Law}(\hat X_N)\bigr) \lesssim \frac{\sqrt d}{N}\,\mathrm{polylog}(N)$~\cite{stephanovitch2026lipschitz}. Consequently, algorithmic deployment does not affect the neural network's statistical guarantees, as choosing $N$ large enough controls the discretization penalty.

\subsection{Globally Lipschitz transport maps}
Beyond establishing sampling rates, bounding $\lambda_{\max}(\nabla a_t)$ has more implications ~\cite{stephanovitch2026lipschitz}. 
Because the one-sided Lipschitz parameter governs contractivity, the underlying deterministic probability-flow ODE exactly mapping the Gaussian $\mathcal{N}(0, \mathrm{Id})$ to $p^\star$ operates as a \emph{globally Lipschitz transport map}. 

This ensures that critical measure concentration phenomena—specifically Poincaré and logarithmic Sobolev inequalities—can be pushed-forward from the tractable Gaussian prior onto a complex data manifold $p^\star$.

\subsection{Manifold geometries and jump dynamics}
The framework detailed herein proves that matching Hölder regularity on the empirical drift yields minimax optimality in Euclidean spaces. An important direction for future work involves generalizing this to Sobolev-type regularity relative to the intrinsic volume measure of embedded lower-dimensional manifolds. 

\begin{conjecture}
Let $p^\star$ be a probability measure with density in $ \mathcal{H}^\beta_K$ relative to a $\mathcal{C}^{\beta+1}$ compact submanifold with strictly positive reach. If the density of $p^\star$ is bounded away from zero, the exact score function satisfies, for all $t \in (0,1)$,
$$ a_t \in W^{\beta+\gamma,2}_{C t^{-\frac{1+\gamma}{2}}}(\mathbb{R}^d, p_t),$$
where $W^{\beta+\gamma,2}(\mathbb{R}^d,p_t)$ represents the intrinsic $L^2(p_t)$-Sobolev space.
\end{conjecture}

Proving this conjecture would bypass ambient dimensionality dependencies, providing a mathematical explanation for why score-based models bypass the curse of dimensionality on practical datasets.

Furthermore, as outlined in Chapter~\ref{chap:generatormatching}, Generator Matching intrinsically accommodates discrete state spaces via pure-jump operators (e.g., text or graph generation). Optimal discrete generation relies on identical stability mechanics—via Wasserstein contraction rates of Markov jump kernels—and analogous Bregman regression targets. Bridging this multiscale neural approximation methodology into discontinuous spaces is a promising avenue for future research.

\bibliographystyle{alpha}
\bibliography{biblio}

\newcommand{\etalchar}[1]{$^{#1}$}
\begin{thebibliography}{SSDK{\etalchar{+}}20}

\bibitem[HD05]{hyvarinen2005estimation}
Aapo Hyv{\"a}rinen and Peter Dayan.
\newblock Estimation of non-normalized statistical models by score matching.
\newblock {\em Journal of Machine Learning Research}, 6(4), 2005.

\bibitem[HJA20]{ho2020denoising}
Jonathan Ho, Ajay Jain, and Pieter Abbeel.
\newblock Denoising diffusion probabilistic models.
\newblock {\em Advances in neural information processing systems},
  33:6840--6851, 2020.

\bibitem[LG16]{le2016brownian}
Jean-Fran{\c{c}}ois Le~Gall.
\newblock {\em Brownian motion, martingales, and stochastic calculus}.
\newblock Springer, 2016.

\bibitem[SSDK{\etalchar{+}}20]{song2020score}
Yang Song, Jascha Sohl-Dickstein, Diederik~P Kingma, Abhishek Kumar, Stefano
  Ermon, and Ben Poole.
\newblock Score-based generative modeling through stochastic differential
  equations.
\newblock {\em arXiv preprint arXiv:2011.13456}, 2020.

\bibitem[Vin11]{vincent2011connection}
Pascal Vincent.
\newblock A connection between score matching and denoising autoencoders.
\newblock {\em Neural computation}, 23(7):1661--1674, 2011.

\end{thebibliography}


\newcommand{\etalchar}[1]{$^{#1}$}
\begin{thebibliography}{GME{\etalchar{+}}24}

\bibitem[GME{\etalchar{+}}24]{gagneux2024visual}
Anne Gagneux, S{\'e}gol{\`e}ne Martin, R{\'e}mi Emonet, Quentin Bertrand, and
  Mathurin Massias.
\newblock A visual dive into conditional flow matching.
\newblock {\em arXiv preprint}, 2024.

\bibitem[HHY{\etalchar{+}}24]{holderrieth2024generator}
Peter Holderrieth, Marton Havasi, Jason Yim, Neta Shaul, Itai Gat, Tommi~S.
  Jaakkola, Brian Karrer, Ricky T.~Q. Chen, and Yaron Lipman.
\newblock Generator matching: Generative modeling with arbitrary markov
  processes.
\newblock In {\em The Twelfth International Conference on Learning
  Representations}, 2024.

\bibitem[Kun90]{kunita1990stochastic}
Hiroshi Kunita.
\newblock {\em Stochastic flows and stochastic differential equations}.
\newblock Cambridge university press, 1990.

\end{thebibliography}


\begin{thebibliography}{SAL26}

\bibitem[SAL25]{stephanovitch2025generalization}
Arthur St{\'e}phanovitch, Eddie Aamari, and Cl{\'e}ment Levrard.
\newblock Generalization bounds for score-based generative models: a synthetic
  proof.
\newblock {\em arXiv preprint arXiv:2507.04794}, 2025.

\bibitem[SAL26]{stephanovitch2026lipschitz}
Arthur St{\'e}phanovitch, Eddie Aamari, and Cl{\'e}ment Levrard.
\newblock Lipschitz regularity in flow matching and diffusion models: sharp
  sampling rates and functional inequalities.
\newblock {\em arXiv preprint arXiv:2604.06065}, 2026.

\end{thebibliography}


\newcommand{\etalchar}[1]{$^{#1}$}
\begin{thebibliography}{SSDK{\etalchar{+}}20}

\bibitem[GME{\etalchar{+}}24]{gagneux2024visual}
Anne Gagneux, S{\'e}gol{\`e}ne Martin, R{\'e}mi Emonet, Quentin Bertrand, and
  Mathurin Massias.
\newblock A visual dive into conditional flow matching.
\newblock {\em arXiv preprint}, 2024.

\bibitem[HD05]{hyvarinen2005estimation}
Aapo Hyv{\"a}rinen and Peter Dayan.
\newblock Estimation of non-normalized statistical models by score matching.
\newblock {\em Journal of Machine Learning Research}, 6(4), 2005.

\bibitem[HHY{\etalchar{+}}24]{holderrieth2024generator}
Peter Holderrieth, Marton Havasi, Jason Yim, Neta Shaul, Itai Gat, Tommi~S.
  Jaakkola, Brian Karrer, Ricky T.~Q. Chen, and Yaron Lipman.
\newblock Generator matching: Generative modeling with arbitrary markov
  processes.
\newblock In {\em The Twelfth International Conference on Learning
  Representations}, 2024.

\bibitem[HJA20]{ho2020denoising}
Jonathan Ho, Ajay Jain, and Pieter Abbeel.
\newblock Denoising diffusion probabilistic models.
\newblock {\em Advances in neural information processing systems},
  33:6840--6851, 2020.

\bibitem[Kun90]{kunita1990stochastic}
Hiroshi Kunita.
\newblock {\em Stochastic flows and stochastic differential equations}.
\newblock Cambridge university press, 1990.

\bibitem[LG16]{le2016brownian}
Jean-Fran{\c{c}}ois Le~Gall.
\newblock {\em Brownian motion, martingales, and stochastic calculus}.
\newblock Springer, 2016.

\bibitem[SAL25]{stephanovitch2025generalization}
Arthur St{\'e}phanovitch, Eddie Aamari, and Cl{\'e}ment Levrard.
\newblock Generalization bounds for score-based generative models: a synthetic
  proof.
\newblock {\em arXiv preprint arXiv:2507.04794}, 2025.

\bibitem[SAL26]{stephanovitch2026lipschitz}
Arthur St{\'e}phanovitch, Eddie Aamari, and Cl{\'e}ment Levrard.
\newblock Lipschitz regularity in flow matching and diffusion models: sharp
  sampling rates and functional inequalities.
\newblock {\em arXiv preprint arXiv:2604.06065}, 2026.

\bibitem[SSDK{\etalchar{+}}20]{song2020score}
Yang Song, Jascha Sohl-Dickstein, Diederik~P Kingma, Abhishek Kumar, Stefano
  Ermon, and Ben Poole.
\newblock Score-based generative modeling through stochastic differential
  equations.
\newblock {\em arXiv preprint arXiv:2011.13456}, 2020.

\bibitem[Vin11]{vincent2011connection}
Pascal Vincent.
\newblock A connection between score matching and denoising autoencoders.
\newblock {\em Neural computation}, 23(7):1661--1674, 2011.

\end{thebibliography}

\end{document}